\newcommand{\reff}[1]{(\ref{#1})}
\theoremstyle{plain}
\newtheorem{theo}{Theorem}[section]
\newtheorem*{theo*}{Theorem}
\newtheorem{cor}[theo]{Corollary}
\newtheorem{prop}[theo]{Proposition}
\newtheorem{lem}[theo]{Lemma}
\theoremstyle{remark}
\newtheorem{rem}[theo]{Remark}
\newcommand{\caa}{{\mathcal A}}
\newcommand{\cb}{{\mathcal B}}
\newcommand{\cc}{{\mathcal C}}
\newcommand{\cd}{{\mathcal D}}
\newcommand{\ch}{{\mathcal H}}
\newcommand{\ci}{{\mathcal I}}
\newcommand{\cl}{{\mathcal L}}
\newcommand{\cm}{{\mathcal M}}
\newcommand{\ct}{{\mathcal T}}
\newcommand{\cu}{{\mathcal U}}
\newcommand{\E}{{\mathbb E}}
\newcommand{\N}{{\mathbb N}}
\renewcommand{\P}{{\mathbb P}}
\newcommand{\R}{{\mathbb R}}
\newcommand{\T}{{\mathbb T}}
\newcommand{\rA}{{\bar{\rm A}}}
\newcommand{\rp}{{\mathfrak{p}}}
\newcommand{\bt}{{\mathbf t}}
\newcommand{\tn}{{\rm T}_n}
\newcommand{\tnu}{{\rm T}_{n,1}}
\newcommand{\tnd}{{\rm T}_{n,2}}
\newcommand{\ttn}{\tilde {\rm T}_n}
\newcommand{\tnv}{{\rm T}_{n,v}}
\newcommand{\cnrk}{{\mathcal N}_{n,r,U_k}}
\newcommand{\cnrs}{{\mathcal N}_{n,r,s}}
\newcommand{\bs}{{\mathbf s}}
\newcommand{\bu}{{\mathbf u}}
\newcommand{\ind}{{\bf 1}}
\newcommand{\mrca}{\mathfrak{m}}
\newcommand{\Card}{{\rm Card}\;}
\newcommand{\norm}[1]{\mathop{\parallel\! #1 \! \parallel}\nolimits}
\newcommand{\val}[1]{\mathop{\left| #1 \right|}\nolimits}
\newcommand{\inv}[1]{\mathop{\frac{1}{ #1}}\nolimits}
\newcommand{\expp}[1]{\mathop {\mathrm{e}^{ #1}}}
\newcommand{\Var}{{\rm Var}\;}
\newcommand{\lb}{[\![} 
\newcommand{\rb}{]\!]}
\title{Cost functionals for large (uniform and simply generated) random trees}
\date{\today}
\author{Jean-François Delmas}
\address{
 Jean-Fran\c cois Delmas,
Université Paris-Est, Cermics (ENPC), F-77455 Marne-la-Vallée.}
\email{delmas@cermics.enpc.fr}
\author{Jean-Stéphane Dhersin}
\address{
Jean-Stéphane Dhersin,
Université Paris 13, Sorbonne Paris Cité, LAGA, CNRS (UMR 7539), 93430 Villetaneuse, France}
\email{dhersin@math.univ-paris13.fr}
\author{Marion Sciauveau}
\address{
Marion Sciauveau, 
Université Paris-Est, Cermics (ENPC), F-77455 Marne-la-Vallée.}
\email{marion.sciauveau@enpc.fr}
\begin{document}

\thanks{This work is partially supported by DIM RDMath IdF}

\keywords{random binary tree, cost  functional, toll function, Brownian
  excursion, continuum  random tree}

\subjclass[2010]{05C05, 60J80, 60F17}

\begin{abstract} 
  Additive  tree  functionals  allow  to  represent  the  cost  of  many
  divide-and-conquer  algorithms. We  give an  invariance principle  for
  such tree  functionals for  the Catalan  model (random  tree uniformly
  distributed among the  full binary ordered trees with  given number of
  nodes) and for simply generated trees (including random tree uniformly
  distributed among  the ordered trees  with given number of  nodes). In
  the  Catalan model,  this relies  on the  natural embedding  of binary
  trees  into the  Brownian  excursion  and then  on  elementary $  L^2$
  computations. We recover results first  given by Fill and Kapur (2004)
  and then by  Fill and Janson (2009). In the  simply generated case, we
  use  convergence  of  conditioned Galton-Watson  towards  stable  Lévy
  trees, which provides less precise results but leads us to conjecture a
  different  phase transition  value  between  ``global'' and  ``local''
  regime. We also recover results first  given by Janson (2003 and 2016)
  in the quadratic case and give a generalization to the stable case.
\end{abstract}

\maketitle

\section{Introduction}
Trees have lots of applications in various fields such as computer science
for data structure or in  biology for genealogical or phylogenetic trees
of extant  species.  Related to  those applications, the study  of large
trees has  attracted some  attention. In this  paper, we  shall consider
asymptotics for additive functionals of large trees corresponding to the
Catalan model and some simply generated trees. 

\subsection{A finite measure indexed by a tree}
\label{sec:measure}
Let  $\T$  denote the  set  of  all  rooted  finite ordered  trees.   For
$\bt\in \T$, let $|\bt|$ be the the number of nodes of $\bt$; for a node
$v\in \bt$,  let $\bt_v$  denote the  sub-tree of  $\bt$ above  $v$ (see
\reff{eq:def-tv} in Section \ref{sec:def-tree} for a precise
definition).  We  consider  the following  unnormalized  non-negative  finite
measure $\caa_\bt$:
\begin{equation}
   \label{eq:def-A*}
\caa_{\bt} (f)=\sum_{v\in \bt}
|\bt_v|f\left(\frac{|\bt_v|}{|\bt|}\right),
\end{equation}
where $f$ is  a measurable real-valued function defined on  $ [0,1]$. We
are  interested in  the asymptotic  distribution of  $\caa_\bt(f)$ when
$\bt$ belongs to a certain class  of trees and $|\bt|$ goes to infinity.
We  shall consider  two classes  of trees:  the binary  trees (and  more
precisely the Catalan model) and some simply generated trees.

We  give some  examples related  to  the measure  $\caa_\bt$ which  are
commonly used  in the analysis  of trees.  In  what follows, for  a tree
$\bt\in \T$,  we denote by $\emptyset$  its root and by $d$  the usual graph
distance on $\bt$.  For $v,w \in \bt$, we say that $w$ is an ancestor of
$v$         and        write         $w\preccurlyeq        v$         if
$d(\emptyset,v)=d(\emptyset,w)+ d(w,v)$.  For $u,v\in \bt$, we denote by
$u\wedge v$, the most recent common ancestor of $u$ and $v$: $u\wedge v$
is  the  only  element  of   $\bt$  such  that:  $w\preccurlyeq  u$  and
$w\preccurlyeq v$ implies $w\preccurlyeq u \wedge v$.
\begin{itemize}
   \item       The       \textbf{total        path       length}    of $\bt$ is
     defined by  
     $P(\bt)=\sum_{w\in  \bt}   d(\emptyset,w)$. As
$d(\emptyset, w)=\sum_{v \in \bt}\ind_{\{v\preccurlyeq  w\}}-1$, we get:
$P(\bt)
=\sum_{v \in \bt}\sum_{w\in \bt} \ind_{\{v\preccurlyeq  w\}}-|\bt|
=\caa_\bt(1)- |\bt|$.

\item The  \textbf{shape functional} of  $\bt$ is defined by $\sum_{w\in
    \bt} \log(\bt_w)$. Notice that  $\sum_{w\in
    \bt} \log(\bt_w)= |\bt|^{-1} \caa_\bt (\log(x)/x) +
  |\bt|\log(|\bt|)$.  (The function $\log(x)/x$ will not be
  covered by the main results of this paper.)
   \item    The      \textbf{Wiener    index}    of    $\bt$     is    defined    by     $W(\bt)=\sum_{u,w\in  \bt} d(u,w)$.  Since 
\[
d(u,w)=\sum_{v\in \bt} (\ind_{\{v\preccurlyeq u\}} +
\ind_{\{v\preccurlyeq w\}} - 
      2\ind_{\{v \preccurlyeq u, \, v\preccurlyeq w\}}),
\]
we deduce that $W(\bt)=2 |\bt| \left( \caa_\bt(1)- \caa_\bt(x)
\right)$. 
\end{itemize}
In a nutshell,  for $\bt\in \T$,  we have:
\begin{equation}
   \label{eq:dist-A1}
\Big(P(\bt),\,  W(\bt)\Big)=\Big( \caa_\bt(1) - |\bt|, \, 2 |\bt| (
  \caa_\bt(1)- \caa_\bt(x) )\Big). 
\end{equation}

The measure $\caa_\bt$ is also related to other  additive functionals
in the particular case of binary  trees, see Section
\ref{sec:index}.

\subsection{Additive functionals and toll functions for binary trees}  
\label{sec:index}

Additive  functionals on  binary trees  allow to  represent the  cost of
algorithms  such  as   ``divide  and  conquer'',  see   Fill  and  Kapur
\cite{fk:ld}.  For $ \bt\in \T $ a  full binary tree, we shall denote by
$1$  (resp. $2$)  the left  (resp. right)  child of  the root.   Thus
$\bt_1$ (resp. $\bt_2$) will be the left (resp. right) sub-tree of the  root of $\bt$.   A functional $F$ on  binary trees is  called an
additive functional if it satisfies the following recurrence relation:
\begin{equation}
   \label{eq:sfe}
F(\bt)=F(\bt_1)+F(\bt_2)+b_{|\bt|},
\end{equation}
for all  trees $\bt$ such  that $|\bt|\geq   3$ and with
$F(\{\emptyset\})=b_1$. The 
given sequence $(b_n, {n\in \N^*})$ is called the toll function. Notice
that:
\begin{equation}
   \label{eq:F=+b}
F(\bt)=\sum_{v\in \bt} b_{|\bt_v|}.
\end{equation}
In  the particular  case where  the toll  function is  a power 
function, that is $b_n=n^\beta$ for $n\in \N^*$ and some $\beta>0$, we get
$F(\bt)=|\bt|^{-\beta+1}  \caa_\bt (x^{\beta-1})$.    In such  cases, the
asymptotic study of the measure $\caa_\bt$ will provide the asymptotic
of the additive functionals.\\

We  say that  $v\in \bt$  is  a leaf  if  $|\bt_v|=1$. We denote  by
$\cl(\bt)$     the    set     of    leaves     of    $\bt$     and, when
$|\bt|>1$, by
$\bt^*=\bt\setminus\cl(\bt)$  the tree  $\bt$  without  its leaves.   We
stress  that  the  additive  functional considered  in  \cite{fk:ld}  is
exactly
\begin{equation}
   \label{eq:tildeF}
\tilde F(\bt)=F(\bt^*)=\sum_{v\in \bt^*}
b_{|\bt_v^*|}. 
\end{equation}
However the  asymptotics will be  the same as the  one for $F$  when the
toll function is  a power function, see Remark \ref{rem:tildeF}.
We  complete the examples of the previous section for binary trees. 
\begin{itemize}
\item The  \textbf{Sackin index} (or  external path length) of  a tree
  $\bt$, used to study the balance of  the tree, is similar to the total
  path   length  of   $\bt$  when   one  considers   only  the   leaves:
  $S(\bt)=\sum_{w\in  \cl(\bt) }  d(\emptyset,w)$. Using  that for  a full
  binary  tree  we  have  $|\bt|=  2  |\cl(\bt)|  -1$,  we  deduce  that
  $2S(\bt)=\sum_{v\in \bt} |\bt_v|- 1=\caa_\bt(1) -1$.

\item The  \textbf{Colless index} of a  binary tree $\bt$ is  defined as
  $C(\bt)=\sum_{v\in  \bt^*}   |L_v -R_v|$, where $L_v=|\cl(\bt_{v1})|$
  (resp. $R_v=|\cl(\bt_{v2})|$) is the number of leaves of the left
  (resp. right) sub-tree above $v$.   Since 
  $\bt$      is      a      full      binary      tree,      we      get
  $2L_v-2R_v=|\bt_{v1}|-|\bt_{v2}|$  and
  $|\bt_{v1}|+|\bt_{v2}|=|\bt_v|-1$. 
  We obtain that $2C(\bt)=\sum_{v\in \bt} |\bt_v|- |\bt| - 2 \chi(\bt)$,
  with
\begin{equation}
   \label{eq:def-chi}
\chi(\bt)=\sum_{v\in  \bt^*}  \min(   |\bt_{v1}  |,|\bt_{v2}|).
 \end{equation}  
  That  is 
  $2C(\bt)=\caa_\bt(1)- |\bt| - 2 \chi(\bt)$.
\item The  \textbf{cophenetic index} of a  tree $\bt$ (which is  used in
  \cite{mrr}  to  study   the  balance  of  the  tree)   is  defined  by
  $\mathrm{Co}(\bt)=\sum_{u,w\in   \cl(\bt),\,  u\neq   w}  d(\emptyset,
  u\wedge                                                           w)$.
  Using   again   that  $\bt$   is   a   full   binary  tree,   we   get
  $4                     \mathrm{Co}(\bt)=
  4\sum_{v\in \bt}
  |\cl(\bt_v)|(|\cl(\bt_v)|-1)-4|\cl(\bt)|(|\cl(\bt)|-1)   =  \sum_{v\in
    \bt}       |\bt_v|^2        -       |\bt|^2        -       |\bt|+1$.
  That is $4 \mathrm{Co}(\bt)= |\bt| \caa_\bt (x) - |\bt|^2 - |\bt|+1$.
\end{itemize}
In a nutshell,  for $\bt\in \T$ full binary,  we have:
\begin{equation}
   \label{eq:SCCo=A}
\Big(2S(\bt), 2C(\bt), 4  \mathrm{Co}(\bt)\Big)= \Big(\caa_\bt(1) -1,\,
  \caa_\bt(1)- |\bt| - 2 \chi(\bt),\, |\bt| \caa_\bt (x) - |\bt|^2 - |\bt|+1  \Big).
\end{equation}

\subsection{Main results on the asymptotics of additive functionals in the Catalan model}
\label{sec:catalan}

We consider  the Catalan  model: let  $\tn$ be  a random  tree uniformly
distributed among the set of full binary ordered trees with $n$ internal
nodes (and thus $n+1$  leaves), which
has cardinal $C_n=(2n)!/[(n!^2) (n+1)]$. We have:
\[
\boxed{|\tn|=2n+1}.
\]
Recall that $\tn$  is a (full binary) Galton-Watson tree  (also known as
simply generated tree)  conditioned on having $n$ internal  nodes. It is
well    known,   see    Tak\`acs   \cite{t:thrrbt},    Aldous
\cite{a:crt2,a:III} and Janson
\cite{j:wisgrt},   that  $|\tn|^{-3/2}P(\tn)$   converges  in
distribution, as  $n$ goes  to infinity,  towards $2\int_0^1  B_s\, ds$,
where $B=(B_s,  s \in [0,1])$ is  the normalized positive Brownian excursion.
This result, see Corollary \ref{cor:GW}, can be seen as a consequence of
the convergence in  distribution of $\tn$ (in fact  the contour process)
properly  scaled  towards  the  Brownian continuum  tree  whose  contour
process  is  $B$,  see  \cite{a:crt2}  and  Duquesne
\cite{d:ltcpcgwt},      or       Duquesne      and       Le      Gall
\cite{dlg:rtlpsbp}   in   the  setting   of   Brownian
excursion.  For a combinatorial approach, which can be extended to other
families     of      trees,     see     also     Fill      and     Kapur
\cite{fk:rafust,fk:ttadrst}     or    Fill,
Flajolet and Kapur \cite{ffk:sahptr}.

In \cite{fk:ld}, the authors considered the toll functions $b_n=n^\beta$
with  $\beta>0$  and  they  proved  that with  a  suitable  scaling  the
corresponding                     additive                    functional
$F_\beta(\tn)=|\tn|^{-\beta+1}  \caa_{\tn}  (x^{\beta-1})$  converge  in
distribution to a  limit, say $Y_\beta$.  The  distribution of $Y_\beta$
is  characterized  by  its moments.   (In  \cite{f:dbsturpm,fk:ld},  the
authors  considered also  the  toll function  $b_n=\log(n)$.)  See  also
Janson and Chassaing \cite{jc:2004} for asymptotics of the Wiener index,
which  is a  consequence of  the  joint convergence  in distribution  of
$(\caa_{\tn}(1),  \caa_{\tn}(x))$  with  a suitable  scaling  and  Blum,
Fran\c{c}ois and Janson \cite{bfj:mv} for  the convergence of the Sackin
and Colless indexes.  In Theorem \ref{theo:principal} (take $\alpha=2$),
we   prove   that,   in   the  Catalan   model,   the   random   measure
$|\tn|^{-3/2}  \caa_{\tn}$  converges  weakly   a.s.,  as  $n$  goes  to
infinity,  to  a  random  measure   $2\Phi_B$,  built  on  the  Brownian
normalized excursion $B$, see \reff{eq:def-phi-h} with $h=B$.  Using the
notation $\tnv=(\tn)_v$  for $v\in \tn$,  this proves in  particular the
following a.s. convergence
\begin{equation}
   \label{eq:cv-intro}
|\tn|^{-3/2} \,\sum_{v\in \tn}
|\tnv|\, \, f\left(\frac{|\tnv |}{|\tn|}\right)
\,\xrightarrow[n\rightarrow+\infty]{\text{a.s.}}\,
  2\Phi_B(f),
\end{equation}
simultaneously for  all real-valued  continuous function $f$  defined on
$[0,1]$.   Notice that Theorem  \ref{theo:principal} is  more general  as the
convergences hold jointly for  all measurable real-valued functions $f$
defined  on  $[0,1]$  such  that   $f$  is  continuous  on  $(0,1]$  and
$\sup_{x\in (0,1]} x^a  |f(x)|$ is finite for some  $a<1/2$. Notice this
covers  the case  of toll  functions $b_n=n^\beta$  with $\beta>1/2$  in
\cite{fk:ld} which corresponds to the so called ``global''
regime.   The  limit $2  \Phi_B(x^{\beta-1})$  gives  a representation  of
$Y_\beta$  for $\beta>1/2$,  which, thanks  to Corollary  \ref{cor:cvZ},
corresponds when $\beta\geq  1$ to the one announced in  Fill and Janson
\cite{fj:plartslrvrt}, that is 
\[
\Phi_B(x^{\beta-1})=\inv{2} \beta (\beta-1)\int_{[0,1]^2} |t-s|^{\beta -2} \, m_B(s,t)
\,ds\,dt,
\]
where $m_B(s,t)=\inf_{u\in [s\wedge t, s\vee t]} B(u)$.
In the ``local'' regime, that  is
$\beta\in (0, 1/2]$, according to Corollary \ref{cor:cvZ}
and Lemma \ref{lem:subexcursion}, the convergence \reff{eq:cv-intro} is
not relevant as $\Phi_B(x^{\beta-1})=+\infty $ a.s.; see
\cite{fk:ld} for the relevant  normalization. 

The  proof  of  Theorem   \ref{theo:principal}  relies  on  the  natural
embedding of  $\tn$ into  the Brownian  excursion, see  \cite{a:III} and
Le Gall \cite{lg:93}, so
that   the   convergence  in   distribution   of   the  random   measure
$|\tn|^{-3/2}\caa_{\tn}$ or of the additive functionals $F_\beta$ (which
holds simultaneously for all $\beta>1/2$)  is then an a.s.  convergence.
We  also  give   the  fluctuations  for  this   a.s.   convergence,  see
Proposition   \ref{prop:fluctuation}.   In   Remark  \ref{rem:wsc},   we
provide, as  a direct  consequence of Theorem  \ref{theo:principal}, the
joint convergence of the total  length path, the Wiener, Sackin, Colless
and  cophenetic  indexes  defined   in  Sections  \ref{sec:measure}  and
\ref{sec:index}.

\begin{rem}
   \label{rem:rpm}
The method presented in this section based on the embedding of $\tn$ into
a  Brownian  excursion can not be extended directly to other models of
trees such as binary search trees, recursive trees or simply generated trees. 

Concerning  binary search  trees (or  random permutation  model or  Yule
trees),  see \cite{r:lds}  and \cite{r:q}
for the  convergence of the  external path length (which  corresponds in
our setting to the Sackin index), \cite{n:mono} for toll
function $b_n=n^\beta$,  \cite{n:wiener} for  the Wiener
index  (and  \cite{j:wisgrt}  for  simply  generated  trees),
\cite{bfj:mv}  (and  \cite{f:pc}
for   other  trees)   for   the  Sacking   and   Colless  indexes,   and
\cite{f:dbsturpm} for the shape function.

Concerning    recursive     trees,    see
\cite{m:ld,df:tpl} for the convergence
of the total path length 
and  \cite{n:wiener}  for  the  Wiener  index.   In  the
setting  of recursive  trees, then  \reff{eq:sfe} is  a stochastic  fixed
point  equation,   which  can   be  analyzed   using  the   approach  of
\cite{rr:cmra}.
\end{rem}

\begin{rem}
   \label{rem:fringe}
   One can replace the toll function  $b_{|\bt|} $ in \reff{eq:sfe} by a
   function of  the tree,  say $\textbf{b}(\bt)$.   For example,  if one
   consider $\textbf{b}(\bt)=\ind_{\{\bt=\bt_0\}}$, with $\bt_0$ a given
   tree, then the corresponding additive  functional gives the number of
   occurrence of the motif $\bt_0$.  The case of ``local'' toll function
   $\textbf{b}$ (with finite  support or fast decreasing  rate) has been
   considered  in  the  study  of fringe  trees,  see  \cite{a:afdgfrt},
   \cite{d:llsf,fgm:prbst}  for binary  search trees,  and \cite{j:ftgw}
   for simply generated  trees and \cite{hj:ft} for  binary search trees
   and recursive trees.

   See \cite{hn:pc}  for the  study of  the phase
   transition on asymptotics of additive functionals with toll functions
   $b_n=n^\beta$  on binary  search trees  between the  ``local'' regime
   (corresponding  to   $\beta\leq  1/2$)  and  the   ``global''  regime
   ($\beta>1/2$).  The same phase transition is observed for the Catalan
   model,  see  \cite{fk:ld}.  Our  main  result,  see
   Theorem  \ref{theo:principal}, concerns  specifically the  ``global''
   regime.
\end{rem}

\subsection{Main results on the asymptotics of additive functionals for simply generated trees}
\label{sec:simply}
We consider  a weight sequence $\rp=(\rp  (k), k\in \N)$ on  $\R_+$ with
generating  function $g_\rp$.   We assume  that $g_\rp$  has a  positive
radius of convergence, $g_\rp(0)=0$, $g_\rp \neq 0$ and  $\rp$ is generic, that is there
exists a positive  root to the equation  $g_\rp(q)=qg_\rp'(q)$.  A simply
generated  tree of  size $p\in  \N^*$ with  weight function  $\rp$ is  a
random tree $\tau^{(p)}$ such that the probability of $\tau^{(p)}$ to be
equal    to    $\bt$,    with     $|\bt|=p$,    is    proportional    to
$\prod_{v\in  \bt} \rp(k_v(\bt))$,  where  $k_v(\bt)$ is  the number  of
children   of   the  node   $v$   in   $\bt$.   According   to   Section
\ref{sec:simply-tree},  since  $g_\rp$  is   generic,  without  loss  of
generality  we  can   assume  that  $\rp$  is   a  critical  probability
($g_\rp(1)=g'_\rp(1)=1$),  so  that  $\tau^{(p)}$ is  distributed  as  a
Galton-Watson  (GW)  tree  $\tau$   with  offspring  distribution  $\rp$
conditioned to $|\tau|=p$. Global convergence of scaled GW trees
$\tau$ to Lévy trees has
been studied in  Le Gall and Le Jan \cite{lglj:bplp} and in 
\cite{dlg:rtlpsbp} using the convergence of contour process. 

Assume $\rp$ belongs  to the domain of attraction of  a symmetric stable
distribution of Laplace exponent $\psi(\lambda)= \kappa \lambda ^\gamma$
with  $\gamma\in  (1,2]$  and  $\kappa>0$.   Then,  the  convergence  of
$\tau^{(p)}$  properly  scaled  to   the  normalized  Lévy  trees  holds
according  to  \cite{d:ltcpcgwt}. This  result  is  recalled in  section
\ref{sec:td}. We  recall that the  normalized Lévy  tree is a  real tree
coded  by  the normalized  positive  excursion  of the  height  function
$H=(H(s), s \in [0,1])$.

 Under the hypothesis of Theorem \ref{theo:d},  there exists a
 sequence $(a_p, p\in \N^*)$ such that we  have the following
   convergence in distribution, see Corollary \ref{cor:cv-measure}:
\begin{equation}
   \label{eq:cv-tp}
\frac{a_p}{p^2} \,\sum_{v\in \tau^{(p)}}
|\tau ^ {(p)}_v|\, \, f\left(\frac{|\tau^{(p)}_v |}{p}\right)
\,\xrightarrow[p\rightarrow+\infty]{\text{(d)}}\,
  \Phi_H(f),
\end{equation}
simultaneously for  all real-valued  continuous function $f$  defined on
$[0,1]$.  The convergence \reff{eq:cv-tp} has to be understood along the
infinite  sub-sequence of  $p$  such that  $\P(|\tau|=p)>0$.  The  proof
relies  on  the  fact  that one  can  approximate  $\caa_\bt(x^k)$,  for
$k\in  \N^*$, by  an  elementary continuous  functional  of the  contour
process  of  $\bt$, see  Section  \ref{sec:approx}.   Then, we  use  the
convergence  of  the contour  process  of  $\tau^{(p)}$ to  the  contour
process  of  $H$ to  conclude.  We  also  provide  the first  moment  of
$\Phi_H(x^{\beta-1})$,  see  Lemma   \ref{lem:ZH}  and  conjecture  that
$\beta=1/\gamma$  corresponds  to  the   phase  transition  between  the
``global'' and ``local'' regime in this setting.

\begin{rem}
   \label{rem:sgt}
We make the following comments. 
\begin{itemize}
   \item Assume that $\rp$ has  finite variance, say $\sigma^2$. 
Then one can take
$a_p=\sqrt{p}$ and $H$ is equal to $(2/\sigma) B$
which corresponds to $\psi(\lambda)= \sigma^2\lambda ^2/2$. By scaling,
or using that the limit in Theorem \ref{theo:principal} does not depend
on $\alpha$, we deduce that $\Phi_{c B}=c \Phi_B$. We can then
rewrite \reff{eq:cv-tp} as:
\begin{equation}
   \label{eq:cv-tp-s2}
p ^{-3/2}  \,\sum_{v\in \tau^{(p)}}
|\tau ^ {(p)}_v|\, \, f\left(\frac{|\tau^{(p)}_v |}{p}\right)
\,\xrightarrow[p\rightarrow+\infty]{\text{(d)}}\,
  \frac{2}{\sigma} \Phi_B(f),
\end{equation}
where  the   convergence  holds   simultaneously  for   all  real-valued
continuous           function          $f$           defined          on
$[0,1]$     and      along     the     infinite      sub-sequence     of
$p$ such that $\P(|\tau|=p)>0$. 

\item If one consider the  binary offspring distribution $\rp$ such that
  $\rp(2)+\rp(0)=1$ (recall  that $1>\rp(0)>0$ by assumption),  one gets
  that $\tau ^{(2n+1)}$  is uniformly distributed among  the full binary
  trees  with   $n$  internal   nodes  (and   $n+1$  leaves),   that  is
  $\tau^{(2n+1)}$ is distributed as $\tn$, see the Catalan model studied
  in Section  \ref{sec:catalan}.  Take $\rp(0)=1/2$ to  get the critical
  case,  and notice  that $\sigma=1$  in \reff{eq:cv-tp-s2}.  The
  convergence  \reff{eq:cv-tp-s2},  with $p=2n+1$,  is then  a weaker  version of
  \reff{eq:cv-intro} (convergence in distribution instead of
  a.s. convergence, and continuous functions on $[0,1]$ instead of
  continuous functions on $(0,1]$ with possible blow up at $0+$).

\item   If   one   consider  the   (shifted)   geometric   distribution:
  $\rp(k)=q(1-q)^k$  for $k\in  \N$  with $q\in  (0,1)$,  one gets  that
  $\tau ^ {(p)}$ is uniformly distributed among the rooted ordered trees
  with $p$ nodes. Take $\rp(0)=1/2$ to get the critical case, and notice
  that $\sigma=2$ in \reff{eq:cv-tp-s2}.
\end{itemize}
\end{rem}

\subsection{Organization of the paper} 
Section \ref{sec:not} is  devoted to the definition of  the main objects
used  in  this  paper  (ordered  rooted  discrete  trees  using  Neveu's
formalism, real trees defined by a contour function, Brownian tree whose
contour function  is a Brownian  normalized excursion, the  embedding of
the discrete binary trees from the Catalan model into the Brownian tree,
and simply generated random trees). We present our main result about the
Catalan  model  in  Section \ref{sec:main-catalan}  on  the  convergence
\reff{eq:cv-intro},  see  Theorem   \ref{theo:principal}  and  Corollary
\ref{cor:cvZ}. (The  proofs are given in  Sections \ref{sec:premlim-lem}
and \ref{sec:proof-main}.)  The corresponding fluctuations are stated in
Proposition  \ref{prop:fluctuation}.  (The  proof  is  given in  Section
\ref{sec:proof-fluc}.)  Section \ref{sec:main-simply}  is devoted to the
main results concerning the convergence of $\caa_{\tau}$ when $\tau$ is
a  simply  generated  tree,  see  Corollaries  \ref{cor:cv-measure}  and
\ref{cor:GW}.     (Their     proofs    are    provided     in    Section
\ref{sec:proof-cv-measure}.) Some technical  results are gathered  in Section
\ref{sec:appendix}.

\section{Notations and a preliminary result}
\label{sec:not}
Let $I$ be an interval of $\R$ with positive Lebesgue measure. We denote
by $\cb(I)$ the set of  real-valued measurable functions defined on $I$.
We denote by $\cc(I)$ (resp.   $\cc_+(I)$) the set of real-valued (resp.
non-negative) continuous functions defined on  $I$. For $f\in \cb(I)$ we
denote by $\norm{f}_\infty $ the supremum norm and by 
 $\norm{f}_{\text{esssup}} $
the essential supremum of $|f|$ over
$I$. The two supremums coincide when $f$ is continuous. 

\subsection{Ordered rooted discrete trees} 
\label{sec:def-tree}

We recall  Neveu's formalism \cite{n:apghw} for  ordered rooted discrete
trees,    which    we    shall    simply    call    trees.     We    set
$\cu=\bigcup  _{n\ge  0}{(\N^*)^n}$  the  set  of  finite  sequences  of
positive  integers with  the  convention $(\N^*)^0=\{\emptyset\}$.   For
$n\geq 0$ and  $u\in (\N^*)^n\subset \cu$, we set $|u|=n$  the length of
$u$.  Let $u,v\in \cu$.  We denote
by $uv$ the concatenation of the two sequences, with the convention that
$uv=u$ if $v=\emptyset$ and $uv=v$ if $u=\emptyset$.  We say that $v$ is
an ancestor  of $u$ (in  a large sense)  and write $v\preccurlyeq  u$ if
there  exists $w\in  \cu$ such  that $u=vw$.   If $v\preccurlyeq  u$ and
$v\neq u$, then we shall write $v\prec  u$.  The set of ancestors of $u$
is  the set  $ \rA_u=\{v\in  \cu; v\preccurlyeq  u\}$.  The  most recent
common ancestor of a subset $\bs$ of $ \cu$, denoted by $\mrca(\bs)$, is
the  unique  element $v$  of  $\bigcap_{u\in  \bs} \rA_u$  with  maximal
length.  We consider the lexicographic  order on $\cu$: for $u,v\in\cu$,
we  set $v<u$  either if  $v\prec u$  or if  $v=wjv'$ and  $u=wiu'$ with
$w=\mrca(\{v,u\})$, $u,u'\in \cu$ and $j<i$ for some $i,j\in\N^*$.

\bigskip

A tree $\bt$ is a subset of $\cu$ that satisfies:
\begin{itemize}
\item $\emptyset\in\bt$,
\item If  $u\in\bt$, then $\rA_u\subset \bt$. 
\item For every $u\in \bt$, there exists 
  $k_u(\bt)\in \N$ such that, for every  $i\in \N^*$,  $ui\in \bt$ if
  and only if  $1\leq i\leq k_u(\bt)$. 
\end{itemize}

Let  $u\in  \bt$.  The  integer  $k_u(\bt)$  represents  the  number  of
offsprings of the node $u$.  The node $u$ is called a  leaf
(resp. internal  node) if $k_u(\bt)=0$  (resp. $k_u(\bt)>0$). 
The  node $\emptyset$  is called  the root  of $\bt$.   We define  the
sub-tree $\bt_u\in \T$ of $\bt$ ``above'' $u$ as:
\begin{equation}
   \label{eq:def-tv}
\bt_u=\{v\in\cu,\ uv\in\bt\}.
\end{equation}
We denote by $|\bt|=\Card(\bt)$ the number  of nodes of $\bt$ and we say
that $\bt$ is finite if $|\bt|<+\infty  $.  Let $d_\bt$ denote the usual
graph    distance     on    $\bt$.      In    particular,     we    have
$d_\bt(\emptyset,u)=|u|$ for $u\in  \bt$. When the context  is clear, we
shall write $d$ for $d_\bt$.

We   denote   by   $\T   $   the    set   of   finite   trees   and   by
$\T^{(p)}=\{\bt\in \T,  \, | \bt|=p\}$ the  set of trees with  $p$ nodes,
for $p\in \N^*$.  Let us recall that, for a tree $\bt\in \T$, we have
\begin{equation}\label{eq:sum_k}
\sum_{u\in\bt}k_u(\bt)=|\bt|-1.
\end{equation}

\subsection{Real trees}
\label{sec:real-t}
We recall the definition of a real tree, see \cite{e:prt}.
A real tree is a metric space $( \ct,d)$ which satisfies the
following two properties for every $x,y\in \ct$:
 \begin{enumerate}
  \item[(i)] There exists a unique isometric map $f_{x,y}$ from $[0,d(x,y)]$ into $ \ct$ such that $f_{x,y}(0)=x$ and $f_{x,y}(d(x,y))=y$.
  \item[(ii)] If $\phi$ is a continuous injective map from $[0,1]$ into
    $ \ct$ such that $\phi(0)=x$ and $\phi(1)=y$, then we have
    $\phi([0,1])=f_{x,y}([0,d(x,y)])$. 
 \end{enumerate}
Equivalently,  a metric space  $( \ct,d)$ is a
real tree if and only if $ \ct$ is connected and $d$ satisfies the four point condition: 
\[
d(s,t)+d(x,y)\leq  \max(d(s,x)+d(t,y), d(s,y)+d(t,x)) \quad\textrm{for
  all} \quad s,t,x,y\in
     \ct.
\]

A rooted real tree is a real tree $( \ct,d)$ with a distinguished element
$\emptyset$ called the root.
 For  $x,y\in\ct$, we denote by
$\lb x,y\rb$ the range of the map $f_{x,y}$ described above. Let
$x,y\in\ct$. We denote by $x\wedge y$ their most recent common ancestor
which is the only $z\in \ct$ such that $\lb \emptyset, z \rb=\lb
\emptyset, x \rb \bigcap \lb \emptyset, y \rb$. 
The out-degree $d_x(\ct)$ of $x$ is the
 number of connected components of $\ct\backslash\{x\}$ which do not
 contain the root. We say $x$ is a
 leaf (resp. branching point) if  $d_x(\ct)=0$ (resp. $d_x(\ct)\geq
 2$). We say $\ct$ is binary if $d_x(\ct)\in \{0, 1, 2\}$ for all $x\in
 \ct$.\\

For  $h\in \cc_+([0,1])$, we
define its minimum over the interval with bounds $s,t\in [0,1]$:
\begin{equation}
   \label{eq:def-m}
m_h(s,t)=\inf_{u\in [s\wedge t, s\vee t]} h(u).
 \end{equation} 
 We shall  also use   the length  of the excursion  of $h$
 above level $r$ straddling $s$ defined by:
\begin{equation}
   \label{eq:def-srs}
\sigma_{r,s}(h)=\int_0^1 dt\,  \ind_{\{\min_h(s,t)\geq  r\}}.
\end{equation}
For $\beta> 0$, we set:
\begin{equation}
   \label{eq:zb}
Z_\beta^h=\int_{0}^{1}ds\int_{0}^{h(s)}dr\;\sigma_{r,s}(h)^{\beta-1}.
\end{equation}

Let  $h\in \cc_+([0,1])$ be  such
that    $m_h(0,1)=0$.    For    every    $x,y\in    [0,1]$,   we    set
$d_{h}(x,y)=h(x)+h(y)-2m_{h}(x,y)$.  It  is easy to check  that $d_h$ is
symmetric  and   satisfies  the   triangle  inequality.    The  relation
$\sim_{h}$           defined          on           $[0,1]^2$          by
$ x\sim_{h}  y \Leftrightarrow  d_h(x,y)=0$ is an  equivalence relation.
Let $ \ct_{h}=[0,1]/  \sim_ h$ be the corresponding  quotient space. The
function $d_h$ on  $[0,1]^2$ induces a function on $  \ct_h^2$, which we
still denoted by $d_h$, and which is  a distance on $ \ct_h$.  It is not
difficult  to check  that  $(  \ct_{h}, d_h)$  is  then  a compact  real
tree. We denote by ${\bf{p}}_h$ the canonical projection from $[0,1]$ into
$ \ct_h$.  Thus,  the metric space $( \ct_{h},d_{h})$   can  be  viewed   as  a   rooted  real  tree   by  setting
$\emptyset={\bf p}_h(0)$. The image of the Lebesgue measure on $[0,1]$ by
${\bf{p}}_h$ is a measure $\mu_h$ on $\ct_h$. 

\subsection{The Brownian continuum random tree $\ct$}
\label{sec:Te}

Let  $B=(B_t,  {0\le  t\le1})  $ be  a positive normalized  Brownian  excursion.
Informally, $B$ is just a linear standard Brownian path started from the
origin and conditioned  to stay positive on $(0,1)$ and  to come back to
$0$ at time  $1$.  For $\alpha>0$, let $e=\sqrt{2/\alpha} \,  B$ and let
$\ct_e$  denote the  associated  real tree  called  Brownian continuum
random tree.   (We recall the associated 
branching mechanism is $\psi(\lambda)=\alpha \lambda^2$.) The
continuum  random  tree  introduced   in  \cite{a:crt1}  corresponds  to
$\alpha=1/2$ and the Brownian tree associated to the normalized Brownian
excursion  corresponds  to  $\alpha=2$.   We shall  keep  the  parameter
$\alpha$ so that the two previous cases are easy to read on the results.
See  \cite{lg:rta}  for  properties  of the  Brownian  continuum  random
tree.  In particular  $\mu_e(dx)$-a.s.  $x$  is a  leaf  and a.s. $\ct_e$  is
binary.

We shall forget  to stress the dependence in $e$  in the notations, when
there is no ambiguity, so that for example we simply write $\ct$, $\mu$,
$\sigma_{r,s}$   and  $Z_\beta$   for  respectively   $\ct_e$,  $\mu_e$,
$\sigma_{r,s}(e)$ which is defined  in \reff{eq:def-srs} and $Z_\beta^e$
which is  defined in \reff{eq:zb}.  For  $r\geq 0$ and $s\in  [0,1]$, we
also have:
\[
\sigma_{r,s}= \mu(x\in \ct, \, d(\emptyset, x\wedge
{\bf{p}}(s) )\geq r)) ,
\]
which is the mass of the sub-tree of $\ct$ containing
${\bf{p}}(s)$ and at distance $r$ from
the root. 

The next result is a consequence of 
Lemma \ref{lem:ZH} in Section \ref{sec:main-simply} (with $H=e$, $\gamma=2$ and
$\kappa=\alpha$). 
\begin{lem}
\label{lem:subexcursion}
We have that a.s.  for all $ 1/2\geq \beta>0$,   $Z_\beta=+\infty
$. We have that a.s.  for all $\beta>1/2$,  $Z_\beta$ is finite and 
\begin{equation}
   \label{eq:EZ}
\E\left[Z_\beta\right]=\inv{2\sqrt{\alpha}}\, \frac 
{\Gamma\left(\beta-\inv{2}\right)}{\Gamma(\beta)}\cdot
\end{equation}
We also have the representation formulas $Z_1= \int_{0}^{1}e(s)\, ds$ and for $\beta>1$:
\begin{equation}
   \label{eq:Z=bis}
Z_\beta= \inv{2} \beta (\beta-1)\int_{[0,1]^2} |t-s|^{\beta -2} \, m(s,t)
\,ds\,dt.
\end{equation}
\end{lem}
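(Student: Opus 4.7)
The plan is in two parts. First, I would derive the representation formulas \reff{eq:Z=bis} and $Z_1=\int_0^1 e(s)\,ds$ by a pathwise rearrangement of the integral \reff{eq:zb}; then the finiteness dichotomy at $\beta=1/2$ and the expectation formula \reff{eq:EZ} will be obtained as a direct instance of Lemma \ref{lem:ZH}, applied to the Brownian CRT seen as a normalized L�vy tree.

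For the rewriting step, I would first swap the two integrals in \reff{eq:zb} using Fubini, which is legitimate even in the divergent regime since the integrand is non-negative. Writing $\int_0^{e(s)}dr=\int_0^\infty \ind_{\{e(s)>r\}}\,dr$, and using that for each fixed $r\geq 0$ the super-level set $\{e>r\}$ decomposes a.s.\ into countably many disjoint open excursion intervals $(\alpha_r^i,\beta_r^i)$ of lengths $\ell_r^i$, on which $\sigma_{r,s}$ is the constant $\ell_r^i$, I obtain the master identity $Z_\beta=\int_0^\infty dr\,\sum_i (\ell_r^i)^\beta$. The case $\beta=1$ is then immediate from $\sum_i \ell_r^i=|\{s\in[0,1]:e(s)>r\}|$ and Fubini. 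For $\beta>1$, the elementary one-line identity $\ell^\beta=\inv{2}\beta(\beta-1)\int_0^\ell\int_0^\ell |t-s|^{\beta-2}\,ds\,dt$ (obtained by reducing to the triangle $0\le s\le t\le\ell$) applied inside each excursion interval, together with the observation that $s$ and $t$ lie in a common excursion interval of $e$ above $r$ if and only if $m(s,t)\geq r$, yields
\[
\sum_i (\ell_r^i)^\beta=\inv{2}\beta(\beta-1)\int_{[0,1]^2}|t-s|^{\beta-2}\ind_{\{m(s,t)\geq r\}}\,ds\,dt .
\]
Integrating in $r$ against $\int_0^\infty \ind_{\{m(s,t)\geq r\}}\,dr=m(s,t)$ then produces \reff{eq:Z=bis}.

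For the finiteness dichotomy and \reff{eq:EZ}, since $e=\sqrt{2/\alpha}\,B$ codes the Brownian CRT with branching mechanism $\psi(\lambda)=\alpha\lambda^2$, this is exactly Lemma \ref{lem:ZH} applied with $H=e$, $\gamma=2$ and $\kappa=\alpha$: substituting $\gamma=2$ in that lemma recovers the constant $\Gamma(\beta-1/2)/(2\sqrt{\alpha}\,\Gamma(\beta))$, and the phase transition $\beta=1/\gamma$ specializes to $\beta=1/2$, matching the simple pole of $\Gamma(\beta-1/2)$ there. Finally, the monotonicity of $\beta\mapsto Z_\beta$ (since $\sigma_{r,s}\leq 1$, the map $\beta\mapsto \sigma_{r,s}^{\beta-1}$ is non-increasing pointwise) upgrades the pointwise-in-$\beta$ a.s.\ statements to ``simultaneously in $\beta$'' statements, which gives the ``for all $\beta$'' clauses. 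The real work is thus concentrated in Lemma \ref{lem:ZH}, where the analogous first-moment computation for the stable height process is the main obstacle; in the Brownian case treated here, everything reduces to pathwise bookkeeping plus parameter identification.
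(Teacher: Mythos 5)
Your proposal is correct, and it splits exactly where the paper splits. For the finiteness dichotomy and the first moment \reff{eq:EZ} you do precisely what the paper does: its entire ``proof'' of Lemma \ref{lem:subexcursion} is the one-line specialization of Lemma \ref{lem:ZH} to $H=e$, $\gamma=2$, $\kappa=\alpha$ (together with the same monotonicity-in-$\beta$ remark to get the statements simultaneously for all $\beta$), the real work being the excursion-measure/Bismut computation of Section \ref{sec:H}. Where you genuinely diverge is the representation formula \reff{eq:Z=bis}. The paper obtains it from the deterministic Lemma \ref{lem:sigma-moment}, whose proof establishes $I(x^{a-1})=J(x^{a-1})$ first for integer $a$ by symmetrizing a multiple integral, then for $\cc^2$ integrands by Bernstein approximation, and finally for $a\in(0,3)\setminus\{1,2\}$ by a truncation argument near $0$. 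Your route instead writes $Z_\beta=\int_0^\infty dr\,\sum_i(\ell_r^i)^\beta$ by decomposing the open set $\{e>r\}$ into its excursion intervals, on each of which $\sigma_{r,s}$ is constant, and then applies the one-line identity $\ell^\beta=\tfrac{1}{2}\beta(\beta-1)\int_0^\ell\int_0^\ell|t-s|^{\beta-2}\,ds\,dt$ interval by interval; the only point to note is that the equivalence ``$s$ and $t$ lie in a common excursion interval above $r$ $\Leftrightarrow$ $m(s,t)\geq r$'' can fail only at the single level $r=m(s,t)$ for each fixed pair, which is harmless after the $dr$-integration (and likewise $\sigma_{r,s}=\ell_r^i$ holds for a.e.\ $r$), so your identities are exact after integrating in $r$. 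This is shorter and more transparent than the paper's three-step approximation scheme, and since it is purely deterministic for any $h\in\cc_+([0,1])$ it would serve equally well where the paper reuses \reff{eq:I=J}, e.g.\ in the proof of Corollary \ref{cor:cvDk}; what the paper's version buys is nothing extra for the power integrands actually needed, so the two arguments have the same scope here.
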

All the moments of $Z_\beta$, for $\beta>1/2$, are given in \cite{fk:ld}, thanks
to  the   identification  provided  by  Corollary   \ref{cor:cvZ}.   The
representation formula for $Z_\beta$ is  motivated by the formulation of
our    Corollary    \ref{cor:cvZ}     given    in    \cite{fk:ld}    and
\cite{fj:plartslrvrt}.

\subsection{The discrete binary tree from the Brownian tree}
\label{sec:T[n]}

A marked tree $\tilde \bt=(\bt, (h_v,  v\in \bt))$ is a tree $\bt\in \T$
with a  label on  each node. The  label $h_v\in (0,  +\infty )$  will be
interpreted as the  length of the branch from below  $v$. (Notice, there
is a branch  below the root.) We define the  concatenation of two marked
trees $\tilde  \bt ^{(i)}=(\bt^{(i)}, (h_v^{(i)}, v\in  \bt^{(i)}))$ with
$i\in            \{1,2\}$             and            $r>0$            as
  $\tilde   \bt=[\tilde   \bt^{(1)},    \tilde   \bt^{(2)};   r]$   with
  $\bt=\{\emptyset\} \bigcup _{i=1}^2  \{v=iu, \, u\in \bt^{(i)}\}$ and
for $v\in \bt$, we have $h_v=r$ if $v=\emptyset$ and $h_v=h_{u}^{(i)}$ if $v=iu$
with $u\in \bt^{(i)}$ and $i\in \{1, 2\}$.\\

Let $g\in  \cc_+([0,1])$ be such that  $\ct_g$ is binary. Let  $n\in \N$
and       $0<t_1<        \cdots       <t_{n+1}<1$        such       that
$ ({\bf  p}_g(t_k), 1\leq  k\leq n+1)$ are  $n+1$ distinct  leaves.  Set
$G_n=(g;     t_1,      \ldots,     t_{n+1})$.      We      denote     by
$ \ct_g(G_n)=\bigcup _{k=1}^{n+1}  \lb \emptyset,{\bf{p}}_g(t_k)\rb$ the
random     real     tree     spanned     by     the     $n+1$     leaves
${\bf{p}}_g(t_1),\dots,{\bf{p}}_g(t_{n+1})$  with root  $\emptyset$.  We
define      recursively       the      associated       marked      tree
$\tilde  \bt(G_n)=(\bt(G_n),  (h_{n,v}(G_n),   v\in  \bt(G_n)))$,  where
intuitively $\bt(G_n)$  is similar to  $\ct_g(G_n)$ but with  the branch
lengths equal to  1 and no branch below the  root, and $h_{n,v}(G_n)$ is
the length of the branch in $\ct_g(G_n)$ below the node corresponding to
$v\in    \bt(G_n)$.     More    precisely,    for    $n=0$,    we    set
$\bt(G_0)=\{\emptyset\}$    and   $h_{0,\emptyset}(G_0)=g(t_1)$.     Let
$n\geq       1$.       Since       $\ct_g$      is       binary      and
$( {\bf  p}_g(t_k), 1\leq k\leq  n+1)$ are $n+1$ distinct  leaves, there
exists    a    unique   $s\in    (t_1,    t_{n+1})$    and   a    unique
$\ell\in  \{1,  \ldots,  n\}$  such that  $g(s)=m_g(t_1,  t_{n+1})$  and
$t_\ell<s<t_{\ell+1}$.                      We                    define
$g_1(t)=(g(t)-           g(s))\ind_{[t_1,          s]}(t)$           and
$g_2(t)=(g(t)- g(s))\ind_{[s,t_{n+1}]}(t)$.  Notice  that $\ct_{g_i}$ is
binary   and   $   ({\bf    p}_g(t_k),   1\leq   k\leq   \ell)$   (resp.
$   ({\bf  p}_g(t_k),   \ell+1\leq  k\leq   n+1)$)  are   $\ell$  (resp.
$n-\ell+1 $)  distinct leaves  of $\ct_{g_1}$ (resp.   $\ct_{g_2}$). Set
$G_{\ell-1}'=(g_1;        t_1,        \ldots,        t_\ell)$        and
$G_{n-\ell}''=(g_2;   t_{\ell+1},    \ldots,   t_{n+1})$    and   define
$\tilde          \bt(G_n)$         as          the         concatenation
$[\bt(G'_{\ell-1}), \bt (G''_{n-\ell});
g(s)]$. 
\\

Let $e$ be the Brownian  excursion defined in Section \ref{sec:Te}.  Let
$(U_n,  n\in\N^{*})$  be  a  sequence of  independent  random  variables
uniform    on   $[0,1]$,    independent   of    $e$.    In    particular
$({\bf  p}(U_n), n\in  \N^*)$ are  a.s. distinct  leaves of  $\ct$.  Let
$(U_{1,n},  \ldots, U_{n+1,n})$  be  the a.s.  increasing reordering  of
$(U_1,           \ldots,           U_{n+1})$           and           set
$G_n=(e; (U_{1,n},  \ldots, U_{n+1,n}))$. We  write $\ct_{[n]}=\ct(G_n)$
the    random    real    tree    spanned    by    the    $n+1$    leaves
${\bf{p}}(U_1),\dots,{\bf{p}}(U_{n+1})$     and     the     root     and
$\ttn=(\tn; (h_{n,v}, v\in \tn))=\tilde \bt (G_n)$ the associated marked
tree. For  $1\leq k\leq n+1$,  we denote by  $u(U_k)$ the leaf  in $\tn$
corresponding  to the  leaf ${\bf  p}(U_k)$ in  $\ct_{[n]}$. See  Figure
\reff{fig:treeT0} for an example with $n=4$. It is well known that $\tn$
is  uniform  among the  discrete  full  binary  ordered trees  with  $n$
internal
nodes. \\

\begin{figure}[ht]
\begin{center}
\includegraphics[height=6cm]{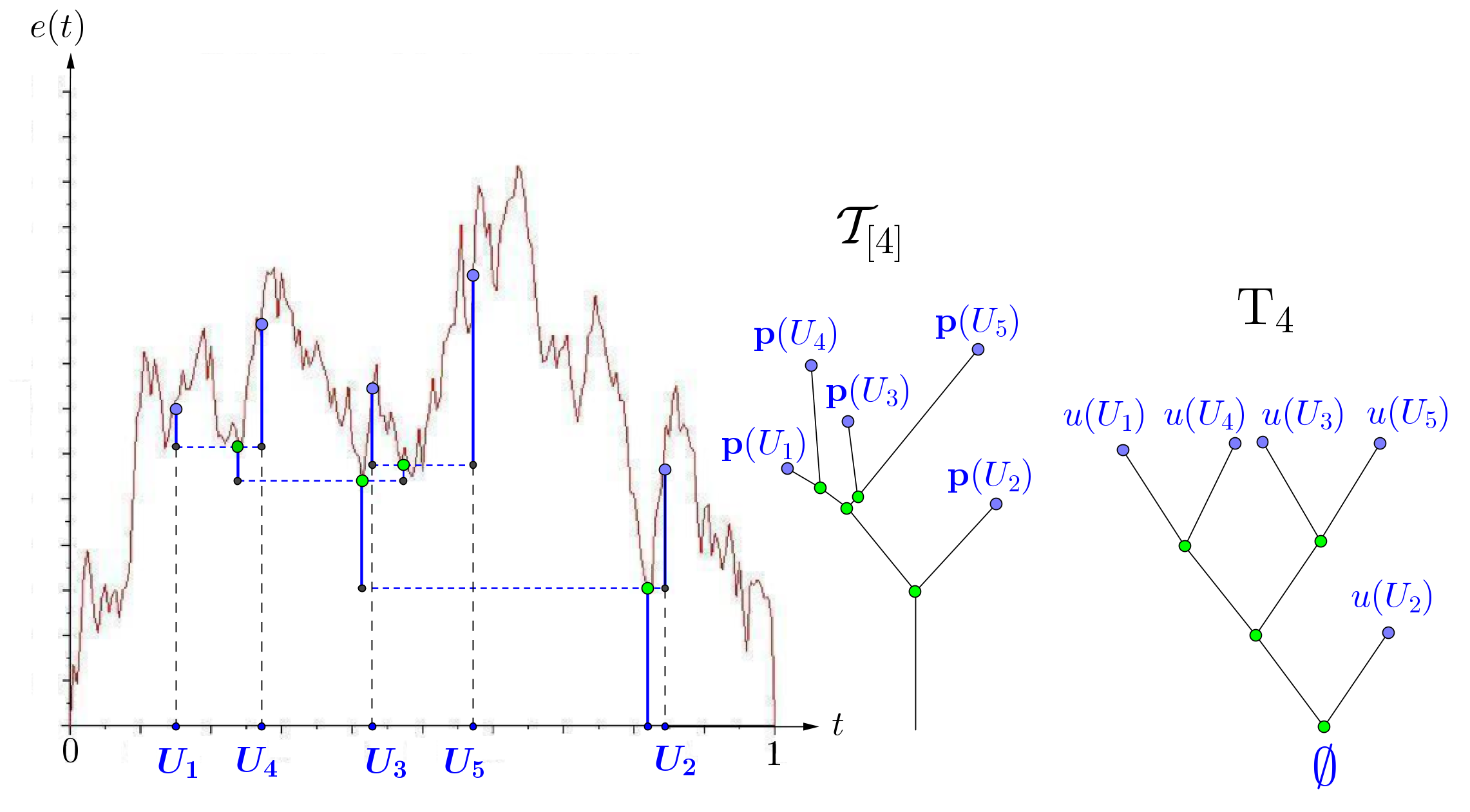}
\end{center}
\caption{The  Brownian excursion,  $ \ct_{[n]}$  and 
  $\tn$ (for  
  $n=4$).}
\label{fig:treeT0}
\end{figure}

\subsection{Simply generated random tree}
\label{sec:simply-tree}
We consider a weight sequence $\rp=(\rp (k), k\in \N)$ of
non-negative real numbers such
that $\sum_{k\in \N} \rp(k)>\rp(1)+\rp(0)$ and $\rp(0)>0$.
 For $\bt\in \T$, we define its
weight as:
\[
w(\bt)=\prod_{v\in \bt} \rp(k_v(\bt)).
\]
We set $w(\T^{(p)})=\sum_{\bt \in \T^{(p)}} w(\bt)$. For $p\in \N^*$
such that $w(\T^{(p)})>0$, a simply generated tree taking
values in $\T^{(p)}$ with weight sequence $\rp$ is a $\T^{(p)}$-random variable
$\tau^{(p)}$ whose distribution is characterized by, for all $\bt\in \T^{(p)}$:
\[
\P(\tau^{(p)}=\bt)=\frac{w(\bt)}{w(\T^{(p)})}\cdot
\]

Let    $g_\rp$     be    the     generating    function     of    $\rp$:
$g_\rp(\theta)=\sum_{k\in \N} \theta^k \rp(k)$  for $\theta>0$. From now
on,  we assume  there  exists $\theta>0$  such  that $g_\rp(\theta)$  is
finite.  For  $q>0$ such  that $g_\rp(q)<+\infty $,  let $\rp_q$  be the
probability       distribution       with      generating       function
$\theta\mapsto g_\rp(q\theta)/g_\rp(q)$.   According to \cite{k:gwpctp}
see also \cite{ad:llcGWcc}, 
the  distribution of  the  GW tree  $\tau$  with offspring  distribution
$\rp_q$  conditioned   on  $   \{|\tau|=p\}$  is  the   distribution  of
$\tau^{(p)}$ and thus does not depend on $q$.  It is easy to check there
exists  at  most   one  positive  root,  say  $q_\rp$,   of  the  equation
$g_\rp(q)=qg_\rp'(q)$.   We say  that  $\rp$ is  generic  (for the  total
progeny)  if  such  root  $q_\rp$ exists  and  non-generic  otherwise.   In
particular,  all   weight sequences such that 
there exists $q>0$ with
$g_\rp(q)$ finite and  $g_\rp(q)<qg_\rp'(q)$ (that is $\rp_q$ is a
super-critical offspring distribution), are generic. 

From now on, we shall assume that $\rp$ is generic. Without  loss of
generality, by replacing $\rp$ by the probability distribution with
generating function $\theta\mapsto g_\rp(q_\rp \theta)/g_\rp(q_\rp)$, 
we will assume that $\rp$ is a critical probability distribution, that
is:
\[
\sum_{k\in     \N}     \rp(k)=\sum_{k\in     \N}     k\rp(k)=1.
\]
We recall that $\tau^{(p)}$ is distributed  as a critical GW tree $\tau$
with offspring distribution $\rp$  conditioned on $\{|\tau|=p\}$, as for
all finite tree $\bt$, $\P(\tau=\bt)=w(\bt)$.

Local limits for  critical GW trees conditioned on having  a large total
progeny go back to \cite{k:gwpctp}  for the generic case (infinite spine
case) and \cite{j:sgtcgwt} for the non-generic case (condensation case),
see also  \cite{ad:llcGWcc,ad:llcGWisc} and  reference therein  for more
general  conditionings. Scaling  limits  or global  limits  for GW  tree
conditioned  on  having  a  large  total progeny  have  been  studied  in
\cite{dlg:rtlpsbp} for forests  (that is collection of GW  trees) and in
\cite{d:ltcpcgwt,k:spthd}  for  critical  GW   tree  in  the  domain  of
attraction  of  Lévy trees,  see  also  \cite{k:ipgw} for  more  general
conditioning of GW trees and \cite{k:ltnggw} for non-generic cases.

\section{Main results}

For $\bt\in \T$, we define the unnormalized measure $\caa_\bt$ on
$[0,1]$ by: 
\[
\caa_{\bt} (f)=\sum_{v\in \bt}
|\bt_v|f\left(\frac{|\bt_v|}{|\bt|}\right), \quad f\in \cc([0,1]).
\]
For  $h\in \cc_+([0,1 ])$,  we also consider 
the random measure $\Phi_h$  on $[0,1]$ defined by:
\begin{equation}
   \label{eq:def-phi-h}
\Phi_h(f)=  \int_{0}^{1}ds\int_{0}^{h(s)}dr\;
f(\sigma_{r,s}(h)), \quad f\in \cb([0,1]).
\end{equation}
We endow the space of non-negative finite measures on $[0,1]$ with the topology of
the weak converge. 

\subsection{Catalan model}
\label{sec:main-catalan}
Let $\alpha>0$ and recall $e=\sqrt{2/\alpha}\, B$, where $B=(B_t, t\in
[0,1])$  denotes the normalized Brownian excursion. 
We also recall that the discrete binary tree $\tn$, defined in Section
\ref{sec:T[n]} from the Brownian tree $\ct_e$, is uniformly distributed
among the full ordered rooted binary trees with $n$ internal nodes. In
particular, we have $|\tn|=2n+1$. 
For $n\in \N^*$, we define the weighted random measure $A_n$ on
$[0,1]$ defined by $A_n=|\tn|^{-3/2} \, \caa_{\tn}$, that is
for $f\in  \cb([0,1])$:
\begin{equation}
   \label{eq:def-An}
A_n(f)=|\tn|^{-3/2} \,\sum_{v\in \tn}
|\tnv|\, \, f\left(\frac{|\tnv |}{|\tn|}\right),
\end{equation}
where $\tnv=(\tn)_v$ is the sub-tree of $\tn$ ``above'' $v$. Notice that $A_n(\{0\})=0$.
The next result is proved in
Section \ref{sec:proof-main}.

\begin{theo}
   \label{theo:principal}
   We have  that a.s. for  all $f\in \cb([0,1])$, continuous  on $(0,1]$
   and  such that  $\lim_{x\rightarrow  0+}  \, x^{a}f(x)  =  0$ for  some
   $a\in [0,1/2)$:
\[
A_n(f)
\,\xrightarrow[n\rightarrow+\infty]{}\,
\sqrt{2\alpha}\,  \Phi_e(f).
\]
\end{theo}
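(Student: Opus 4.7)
The plan is to use the embedding of $\tn$ inside the Brownian CRT $\ct_e$ to identify the limit, establish $L^2$ convergence on power-type test functions, and then extend to the general class of $f$ in the statement. The embedding in Section~\ref{sec:T[n]} provides the key identity: for each internal $v\in\tn$, the corresponding branching point of $\ct_{[n]}$ sits at some height $r_v$, one has $|\tnv|=2L_v-1$ with $L_v=\#\{k:v\preccurlyeq u(U_k)\}$, and $L_v/(n+1)$ equals the empirical analogue $\sigma^{(n)}_{r_v,U_k}(e)$ of $\sigma_{r_v,U_k}(e)$ (empirical measure of $U_1,\dots,U_{n+1}$ in place of Lebesgue). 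On the path from the root to ${\bf p}(U_k)$ in $\ct_{[n]}$, the step function $r\mapsto\sigma^{(n)}_{r,U_k}$ jumps precisely at the heights of the ancestors of $u(U_k)$, giving
$\int_0^{e(U_k)}f(\sigma^{(n)}_{r,U_k})\,dr=\sum_{v\preccurlyeq u(U_k)}h_{n,v}\,f(L_v/(n+1))$,
which will serve as the bridge between $A_n(f)$ and $\Phi_e(f)$.

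\smallskip

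Taking $f(x)=x^{\beta-1}$ with $\beta>1/2$ and writing $A_n(x^{\beta-1})=|\tn|^{-\beta-1/2}\sum_v|\tnv|^{\beta}$, I will compute $\E[A_n(x^{\beta-1})]$ and $\E[A_n(x^{\beta-1})^2]$ using the explicit joint laws of $(L_v,h_{n,v})$ (and of pairs thereof), which are accessible either from the combinatorics of the Catalan model or from the sampling construction inside $\ct_e$. Matching the resulting limits with the moments of $Z_\beta=\Phi_e(x^{\beta-1})$ supplied by Lemma~\ref{lem:subexcursion} should yield $\E[A_n(x^{\beta-1})]\to\sqrt{2\alpha}\,\E[Z_\beta]$ together with $\mathrm{Var}\!\left(A_n(x^{\beta-1})-\sqrt{2\alpha}\,\Phi_e(x^{\beta-1})\right)\to 0$; the factor $\sqrt{2\alpha}$ comes from the normalization $|\tn|^{-3/2}\sim(2n)^{-3/2}$ combined with the scaling $e=\sqrt{2/\alpha}\,B$. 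A Borel--Cantelli argument along a subsequence, interpolated by the fluctuation estimate of Proposition~\ref{prop:fluctuation}, then upgrades this to a.s.\ convergence for all $n$, simultaneously over $\beta$ in a countable dense subset of $(1/2,\infty)$.

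\smallskip

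The extension to general $f$ is carried out on a single probability-one event. The a.s.\ convergence of $A_n(x^{k})$ for all integers $k\geq 0$, together with the a.s.\ boundedness of the total masses $A_n(1)$, implies weak convergence $A_n\to\sqrt{2\alpha}\,\Phi_e$ of finite measures on $[0,1]$, and hence $A_n(f)\to\sqrt{2\alpha}\,\Phi_e(f)$ for every $f\in\cc([0,1])$. For an $f$ blowing up at $0$ with $x^{a}f(x)\to 0$ and $a\in[0,1/2)$, split $f=f\ind_{[\epsilon,1]}+f\ind_{(0,\epsilon)}$, apply the continuous case to the first piece, and dominate the second by $C_f\,A_n(x^{-a}\ind_{(0,\epsilon)})\leq C_f\,A_n(x^{-a})$; the latter is a.s.\ finite by the previous paragraph applied with $\beta=1-a>1/2$---this is where the hypothesis $a<1/2$ is sharp and signals the global/local phase transition. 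Letting $\epsilon\downarrow 0$ and invoking $\Phi_e(x^{-a}\ind_{(0,\epsilon)})\to 0$ by dominated convergence closes the argument. The principal obstacle is the middle step: producing the exact constant $\sqrt{2\alpha}$ and a variance bound summable along a subsequence requires careful matching of the joint law of the nested sizes $|\tnv|$ with the covariance structure of $Z_\beta$ read off from the Brownian excursion.
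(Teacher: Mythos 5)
Your overall architecture (embed $\tn$ in $\ct_e$, establish a moment estimate for power test functions, Borel--Cantelli along a subsequence, interpolate, then pass to general $f$ via weak convergence of the weighted measures) is the paper's architecture, and your bridge identity expressing $\sum_{v\preccurlyeq u(U_k)}h_{n,v}f(|\tnv|/|\tn|)$ as an integral along the excursion is exactly the paper's \reff{eq:s=i}. But the central quantitative step, as you state it, has a genuine gap. Matching $\E[A_n(x^{\beta-1})]\to\sqrt{2\alpha}\,\E[Z_\beta]$ and $\E[A_n(x^{\beta-1})^2]\to 2\alpha\,\E[Z_\beta^2]$ does \emph{not} yield $\Var\big(A_n(x^{\beta-1})-\sqrt{2\alpha}\,\Phi_e(x^{\beta-1})\big)\to 0$: you also need the cross term $\E\big[A_n(x^{\beta-1})\,\Phi_e(x^{\beta-1})\big]\to\sqrt{2\alpha}\,\E[Z_\beta^2]$, i.e.\ asymptotically perfect correlation under the coupling, and that is precisely the point you defer as ``the principal obstacle''. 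The paper never computes this cross moment. Instead it runs a chain of pathwise approximations $A_n\approx A_{1,n}\approx A_{2,n}\approx A_{3,n}\approx A_{4,n}$ (Lemmas \ref{lem:A1}--\ref{lem:A4}), successively inserting the mean branch length, replacing it by the actual $h_{n,v}$ (an $L^2$ estimate built on the exchangeable representation of Lemma \ref{lem:hh}, at cost $n^{-1/4}$), replacing $|\tnv|$ by $2|\cl_{n,v}|$, and replacing the binomial proportion $(2\cnrk+1)/(2n+1)$ by $\sigma_{r,U_k}$; the terminal object $A_{4,n}(f)$ then converges a.s.\ to $\sqrt{2\alpha}\,\Phi_e(f)$ by the strong law of large numbers conditionally on $e$ (Lemma \ref{lem:LGN}). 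Some such pathwise mechanism (or an explicit computation of the cross moment using the sampling construction) is indispensable; without it the $L^2$ claim is unsupported.

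The interpolation between subsequence points is also flawed: you invoke Proposition \ref{prop:fluctuation}, which is circular (its proof, via Lemma \ref{lem:cv-ZA}, uses Theorem \ref{theo:principal}), and in any case a convergence-in-distribution fluctuation statement cannot control $\sup_{\varphi(n')\le n<\varphi(n'+1)}|A_n(x^a)-A_{\varphi(n')}(x^a)|$ almost surely. The paper's fix is elementary monotonicity: $\ct_{[n]}\subset\ct_{[n']}$ for $n\le n'$ gives that $\sum_{v\in\tn}|\tnv|^{1+a}$ is nondecreasing in $n$ (inequality \reff{eq:ineq-tn}), which sandwiches $A_n(x^a)$ between rescaled values at consecutive subsequence points; the same monotonicity is what allows passing from a countable dense set of exponents to all of them at the end. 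Your final extension to general $f$ (weak convergence from moments, plus a cutoff near $0$ dominated by $A_n(x^{-a})$ with $1-a>1/2$) is in substance the paper's argument and is fine, modulo checking that $\Phi_e$ does not charge the discontinuity point of $\ind_{[\epsilon,1]}$.
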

We deduce from this Theorem that $(A_n, n\in \N^*)$ converges a.s. for
the weak topology towards $\sqrt{2\alpha}\, \Phi_e$. 

  By  convention, for $a\in \R$, we  denote  the  function
$x\mapsto x^a\ind_{(0,1]}(x)$ defined on $[0,1]$   by $x^a$. 
According to Lemma \ref{lem:subexcursion}, the random variable
$Z_\beta=\Phi_e(x^{\beta-1})$, see definition \reff{eq:zb}, is a.s.  finite (resp.  infinite) if $\beta>1/2$
(resp.  $0<\beta\leq  1/2$). We deduce  the following convergence from
Theorem \ref{theo:principal}. 
\begin{cor}
\label{cor:cvZ}
We have that a.s.  for all $\beta> 0$,
 \[
\lim_{n\rightarrow+\infty } |\tn|^{-(\beta+\frac{1}{2})}\sum_{v\in \tn}|\tnv|^{\beta}= \sqrt{2 \alpha} \, Z_\beta. 
\]
\end{cor}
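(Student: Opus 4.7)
The key identity, using the paper's convention $x^{\beta-1}=0$ at $x=0$, is
\[
A_n(x^{\beta-1})=|\tn|^{-3/2}\sum_{v\in\tn}|\tnv|\left(\frac{|\tnv|}{|\tn|}\right)^{\beta-1}=|\tn|^{-(\beta+1/2)}\sum_{v\in\tn}|\tnv|^{\beta},
\]
so the corollary amounts to establishing $A_n(x^{\beta-1})\to\sqrt{2\alpha}\,Z_\beta$ a.s.\ simultaneously for every $\beta>0$. My plan is to split according to whether $\beta>1/2$ or $\beta\leq 1/2$ and invoke Theorem~\ref{theo:principal} in both cases, with a truncation to handle the non-integrable regime.

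For $\beta>1/2$, the function $f_\beta(x)=x^{\beta-1}$ is continuous on $(0,1]$, and picking any $a\in\bigl(\max(0,1-\beta),1/2\bigr)$ (non-empty since $1-\beta<1/2$) one has $x^a f_\beta(x)=x^{a+\beta-1}\to 0$ as $x\to 0+$. Thus Theorem~\ref{theo:principal} applies directly, and because the a.s.\ event in that theorem is uniform in the admissible test function, it yields the convergence $A_n(x^{\beta-1})\to\sqrt{2\alpha}\,\Phi_e(x^{\beta-1})=\sqrt{2\alpha}\,Z_\beta$ simultaneously for all $\beta>1/2$.

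For $\beta\leq 1/2$, Lemma~\ref{lem:subexcursion} gives $Z_\beta=+\infty$ a.s., so it suffices to show $A_n(x^{\beta-1})\to+\infty$ a.s. I would first handle the boundary $\beta=1/2$ by a truncation from below: set $g_M(x)=\min(Mx,x^{-1/2})$ for $M\in\N^*$. Each $g_M$ is continuous on $[0,1]$, vanishes at $0$, is bounded, and therefore satisfies the hypotheses of Theorem~\ref{theo:principal} trivially (take $a=0$); moreover $g_M\uparrow x^{-1/2}$ pointwise on $(0,1]$. On the a.s.\ event produced by the theorem applied to the countable family $\{g_M\}_{M\geq 1}$, one has $A_n(g_M)\to\sqrt{2\alpha}\,\Phi_e(g_M)$, and since $g_M\leq x^{-1/2}$ pointwise,
\[
\liminf_{n\to\infty} A_n(x^{-1/2})\;\geq\;\sqrt{2\alpha}\,\Phi_e(g_M)\quad\text{a.s.}
\]
Monotone convergence in $M$ yields $\Phi_e(g_M)\uparrow \Phi_e(x^{-1/2})=Z_{1/2}=+\infty$ a.s., so $A_n(x^{-1/2})\to+\infty$ a.s. The corresponding divergence for every $\beta<1/2$ then follows from the observation that $A_n(x^{\beta-1})=|\tn|^{-1/2}\sum_{v\in\tn}(|\tnv|/|\tn|)^{\beta}$ is non-increasing in $\beta$ (because $|\tnv|/|\tn|\leq 1$), so $A_n(x^{\beta-1})\geq A_n(x^{-1/2})$ for every $\beta\leq 1/2$ and every $n$.

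The statement is essentially a direct specialization of Theorem~\ref{theo:principal}, and no step presents a genuine obstacle: the only mild work is the lower-truncation argument needed in the non-integrable regime $\beta\leq 1/2$, and this is made clean by the joint monotonicity of $Z_\beta$ and $A_n(x^{\beta-1})$ in $\beta$, which also ensures that the whole argument uses only countably many a.s.\ events.
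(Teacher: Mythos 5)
Your proof is correct and follows essentially the same route as the paper's: apply Theorem~\ref{theo:principal} directly for $\beta>1/2$, and for $\beta\leq 1/2$ bound $A_n(x^{\beta-1})$ from below by a truncated test function covered by the theorem, then let the truncation level tend to infinity and use $Z_\beta=+\infty$ a.s.\ together with monotonicity in $\beta$. The only cosmetic differences are your choice of truncation $\min(Mx,x^{-1/2})$ in place of the paper's $c\wedge x^{\beta-1}$ and the order in which the truncation and the monotonicity reduction are performed.
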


\begin{proof}
Notice that 
$|\tn|^{-(\beta+\frac{1}{2})}\sum_{v\in
  \tn}|\tnv|^{\beta}=A_n(x^{\beta-1})$. 
   For $\beta>1/2$, the Corollary is then a direct consequence of
  Theorem \ref{theo:principal} with $f=x^{\beta-1}$.
We  now consider   the  case   $1/2\geq
  \beta>0$.  Let $c>0$. Using  Theorem \ref{theo:principal}, we have that
  a.s.:
\[
\liminf_{n\rightarrow+\infty }  A_n( x^{\beta-1}) 
\geq 
\lim_{n\rightarrow+\infty }  A_n(c\wedge x^{\beta-1}) 
= \sqrt{2 \alpha}\, \Phi_e(c\wedge x^{\beta-1}) .
\]
Letting   $c$   goes   to   infinity,    and   using   that,    by   Lemma
\ref{lem:subexcursion}, $\Phi_e(x^{\beta-1}) =Z_\beta=+\infty  $ a.s., we
get                               that                              a.s.
$\liminf_{n\rightarrow+\infty } A_n( x^{\beta-1}) \geq \sqrt{2 \alpha}\,
Z_\beta=+\infty $.  Then use a monotonicity argument in $\beta$ to deduce
the results holds a.s. for all $\beta\in (0, 1/2]$.
\end{proof}

\begin{rem}
   \label{rem:wsc} 
   Corollary      \ref{cor:cvZ}     gives      directly     that
   $(  |\tn|^{-3/2}\sum_{v\in  \tn} |\tnv|,  |\tn|^{-5/2}\sum_{v\in  \tn}
   |\tnv|^2)$
   is  asymptotically distributed  as  $\sqrt{2\alpha}  \, (Z_1,  Z_2)$.
Recall $\chi(\bt)$ defined in  \reff{eq:def-chi}. According     to    Lemma 3 of  \cite{bfj:mv}     or     \cite{f:pc},
there exists a finite constant $K$ such that,  for all $n\geq 3$,  $\E[\min(   |\tnu|, |\tnd|)]\leq
K|\tn|^{1/2}$. Since conditionally on $\{v\in \tn\}$ and $|\tnv|$, we have
that $\tnv$ is uniformly distributed on the trees with $|\tnv|$ nodes,
we deduce that $\E[\chi(\tn)]\leq  K
\E[\caa_{\tn}(\sqrt{x})]$. According to Theorem 3.8 in \cite{fk:ld}, we
have $\E[\caa_{\tn}(\sqrt{x})]= O(n\log(n))$ and thus
$\E[\chi(\tn)]=O(n \log(n))$. Noticing that $\chi(\tn)$ is
non-decreasing in $n$, and arguing as in Section
\ref{sec:proof-main}, we deduce that a.s. $\lim_{n\rightarrow+\infty }
|\tn|^{-3/2} \chi(\tn)=0$. 
Then, we   can  directly   recover  the   joint  asymptotic
   distribution of  the total length  path, the Wiener,  Sackin, Colless
   and  cophenetic indexes  defined by \reff{eq:dist-A1} in Section
   \ref{sec:measure} and \reff{eq:SCCo=A} 
   in Section 
   \ref{sec:index}   for the
   Catalan   model.    More   precisely,    we   have:
\[
\left(\frac{P(\tn)}{|\tn|^{3/2}}, \frac{W(\tn)}{|\tn|^{5/2}}, \frac{S(\tn)}{|\tn|^{3/2} },
  \frac{C(\tn)}{|\tn|^{3/2} }, 
\frac{\mathrm{Co}(\tn)}{|\tn|^{5/2}}\right)
\xrightarrow[n\rightarrow \infty ]{\text{a.s.}} 
\sqrt{2\alpha} \left(Z_1, 2 (Z_1 - Z_2),
     \frac{Z_1}{2},  \frac{Z_1}{2}, \frac{Z_2}{4}\right).
\] 
\end{rem}

\begin{rem}
  \label{rem:tildeF} 
We complete  Corollary \ref{cor:cvZ} by considering
  the additive  functionals $\tilde F$, see  definition \reff{eq:tildeF}
  used  in  \cite{fk:ld}, instead  $F$  defined  by \reff{eq:F=+b}.  For
  $\bt\in \T$ and $|\bt|>1$,  recall $\bt^*=\bt\setminus\cl(\bt)$ is the
  tree $\bt$ without its leaves.  We have that a.s.  for all $\beta> 0$,
\begin{equation}
   \label{eq:t*}
\lim_{n\rightarrow+\infty } |\tn^*|^{-(\beta+\frac{1}{2})}\sum_{v\in
  \tn^*}|\tnv^*|^{\beta}= 2\sqrt{ \alpha} \, Z_\beta.  
 \end{equation}
This result differs from Corollary \ref{cor:cvZ} as 
$\sqrt{2}$ is replaced by 2. To prove \reff{eq:t*}, first notice that for a full binary
tree $|\bt^*|=|\bt| - |\cl(\bt)|=(|\bt|-1)/2$ so that:
\[
 |\tn^*|^{-(\beta+\frac{1}{2})}\sum_{v\in
  \tn^*}|\tnv^*|^{\beta}
={\sqrt{2}}\,  (|\tn|-1)^{-(\beta+\frac{1}{2})}\sum_{v\in
  \tn}(|\tnv|-1)^{\beta}.
\]
Let  $x_+=\max(x,0)$ denote  the positive  part  of $x\in  \R$. We  have
$x^\beta\geq (x-1)^\beta\geq x^\beta -  c_\beta x^{(\beta-1)_+}$ for all
$x\geq 1$ with $c_\beta=1 $  if $0<\beta\leq 1$ and $c_\beta=\beta$ if
$\beta\geq 1$. Then use Corollary  \ref{cor:cvZ} (two times) to deduce
that a.s. for all $\beta>0$:
\[
\lim_{n\rightarrow+\infty } |\tn^*|^{-(\beta+\frac{1}{2})}\sum_{v\in
  \tn^*}|\tnv^*|^{\beta}= {\sqrt{2 }} \, 
\lim_{n\rightarrow+\infty } |\tn|^{-(\beta+\frac{1}{2})}\sum_{v\in
  \tn}|\tnv|^{\beta}= 2\sqrt{ \alpha} \, Z_\beta.  
\]
\end{rem}

The next proposition, whose proof is given in Section
\ref{sec:proof-fluc}, gives  the  fluctuations corresponding to   the 
invariance  principles  of  
Theorem \ref{theo:principal}. Notice  the speed of convergence
in the invariance principle is of order $|\tn|^{-1/4} $.

\begin{prop}\label{prop:fluctuation}
Let $f\in \cc([0,1])$ be locally Lipschitz  continuous on $(0,1]$ with
$\norm{x^a f'}_{\text{esssup}} $  finite for some $a\in 
(0,1)$. We have 
the following convergence in distribution:
 \[
\big(|\tn|^{1/4} (A_n- \sqrt{2 \alpha}\, \Phi_e)(f), \, A_n \big)
\; \xrightarrow[n\rightarrow \infty ]{(d)} \;
\left((2\alpha)^{1/4}
  \sqrt{\Phi_e(xf^2) }\,\,
  G, \, \sqrt{2 \alpha}\,\,  \Phi_e \right),
\]
where $G$ is a standard (centered reduced) Gaussian random variable independent of
the excursion $e$. 
\end{prop}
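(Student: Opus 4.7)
The plan is to condition on the Brownian excursion $e$, since all remaining randomness in $\tn$ comes from the independent uniform sample $(U_i,1\leq i\leq n+1)$ used in the embedding of Section~\ref{sec:T[n]}. For each internal node $v\in\tn$, let $h_v\geq 0$ denote its height in the continuous tree $\ct_e$ and $s_v\in[0,1]$ be a representative in its subtree, and set $\sigma_v=\sigma_{h_v,s_v}(e)$. Then $|\tnv|=2\ell_v-1$, where $\ell_v$ is the number of $U_i$ in the sub-excursion $\{t:m_e(s_v,t)\geq h_v\}$; conditionally on $e$ this is $\mathrm{Binomial}(n+1,\sigma_v)$, so $|\tnv|/|\tn|=\sigma_v+O_\P(n^{-1/2})$ with explicit Binomial fluctuations. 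This furnishes the source of all conditional randomness.

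Using the local Lipschitz hypothesis $\norm{x^a f'}_{\text{esssup}}<+\infty$ to Taylor-expand $f(|\tnv|/|\tn|)$ around $\sigma_v$ and summing, the fluctuation $(A_n-\sqrt{2\alpha}\Phi_e)(f)$ splits into a zeroth-order part, which is the Riemann-sum approximation of $\Phi_e(f)$ already controlled (with deterministic error negligible after multiplication by $|\tn|^{1/4}$) in the proof of Theorem~\ref{theo:principal}, and a first-order remainder
\[
R_n=|\tn|^{-3/2}\sum_{v\in\tn}|\tnv|\,f'(\xi_v)\,\bigl(|\tnv|/|\tn|-\sigma_v\bigr),
\]
which is conditionally centred and of order $|\tn|^{-1/4}$. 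I would compute $\Var(|\tn|^{1/4}R_n\mid e)$ via a double sum over pairs of nodes in the spirit of the $L^2$ arguments of Section~\ref{sec:premlim-lem}; the covariance between $\ell_v$ and $\ell_w$ involves $\sigma_{v\wedge w}$, and substantial telescoping should yield
\[
\Var\bigl(|\tn|^{1/4}(A_n-\sqrt{2\alpha}\Phi_e)(f)\,\big|\,e\bigr)\;\xrightarrow[n\to\infty]{\text{a.s.}}\;\sqrt{2\alpha}\,\Phi_e(xf^2),
\]
where the extra factor $x$ arises from $\sigma_v(1-\sigma_v)\sim\sigma_v$ in the binomial variance. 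Gaussian convergence follows by a standard moment method (checking that the conditional fourth moment matches $3\Var^2$), and independence of $G$ from $e$ is automatic because $R_n$ is a functional of the $U_i$ only. The joint convergence with $A_n$ then combines this with Theorem~\ref{theo:principal}.

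The main obstacle is the clean identification of the limiting variance: the subtree sizes $(|\tnv|,\,v\in\tn)$ are strongly dependent through the nested structure of $\tn$, so the conditional variance of $R_n$ is a double sum with many off-diagonal terms that must telescope into the compact form $\sqrt{2\alpha}\Phi_e(xf^2)$. The hypothesis $\norm{x^a f'}_{\text{esssup}}<+\infty$ with $a\in(0,1)$ is precisely what keeps the Taylor remainder and the boundary contribution of small subtrees (which are numerous and whose $\sigma_v$ is close to $0$, where $f'$ may blow up) under control uniformly in $n$.
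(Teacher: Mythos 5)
Your proposal misidentifies the source of the fluctuations, and the term you single out as carrying the Gaussian limit is in fact negligible at scale $|\tn|^{1/4}$. The error coming from the binomial fluctuations of the subtree sizes, i.e.\ from replacing $|\tnv|/|\tn|$ by the continuum mass $\sigma_v$, is exactly the quantity bounded in Lemma \ref{lem:A4} (i): under your hypothesis on $f$ it is $O(n^{-1/2})$ in $L^1$, so $|\tn|^{1/4}R_n\rightarrow 0$ in probability and your first-order remainder contributes nothing to the limit. (This is already suggested by the form of the answer: the limiting variance $\Phi_e(xf^2)$ is a quadratic form in $f$, whereas any variance built from your $R_n$ would be a quadratic form in $f'$; no telescoping of binomial covariances converts one into the other.) Conversely, the ``zeroth-order part'' that you dismiss as a Riemann-sum error negligible after multiplication by $|\tn|^{1/4}$ is precisely where the fluctuation lives: the proof of Theorem \ref{theo:principal} only controls that approximation at scale $o(1)$, the step from $A_{1,n}$ to $A_{2,n}$ (Lemma \ref{lem:A2}) being only $O(n^{-1/4})$, and Remark \ref{rem:cv-Tn} states explicitly that this step is the sole contribution to the fluctuations, while the approximation of $\ct$ by $\ct_{[n]}$ (your binomial sampling error) contributes none.

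The missing idea is that each node of $\tn$ enters $A_n(f)$ with weight $1$, whereas in the continuum functional $\Phi_e(f)$ the corresponding branch enters with weight $2\sqrt{\alpha n}\,h_{n,v}$; conditionally on $\tn$, the branch lengths $(h_{n,v}, v\in\tn)$ are, by Lemma \ref{lem:hh}, distributed as $(L_nE_v/S_{\tn}, v\in \tn)$ with i.i.d.\ exponentials $E_v$, so each such weight has mean close to $1$ but also standard deviation of order $1$. Summing these order-one, essentially independent fluctuations against the coefficients $|\tn|^{-3/2}|\tnv|f(|\tnv|/|\tn|)$ produces a conditional CLT at scale $n^{-1/4}$ with conditional variance $\sqrt{2\alpha}\,\Phi_e(xf^2)$; the paper carries this out by computing the conditional Laplace transform of $Z_n(f)=\sum_{v\in\tn}X_{n,v}(E_v-1)$ (Lemma \ref{lem:cv-ZA}) and deducing the stable convergence, after checking in Lemma \ref{lem:cvD} that every other step (including yours, and the Monte Carlo error over the sampled leaves) is $o_{\P}(n^{-1/4})$. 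Your framework of conditioning on $e$ and obtaining a $G$ independent of $e$ is in the right spirit, but without the exponential representation of the branch lengths you cannot reach the stated variance.
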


Notice  the fluctuations  for  the a.s.  convergence  towards $Z_\beta$  with
$\beta\geq  1$, given  in  Corollary \ref{cor:cvZ},  have an  asymptotic
variance (up  to a multiplicative  constant) given by  $Z_{2\beta}$.

\begin{rem}
   \label{rem:ext}
Using Remark \ref{rem:holder}, we can easily extend Proposition
\ref{prop:fluctuation} to  uniformly H\"older
continuous functions  $f$ on $[0,1]$ with H\"older exponent $\lambda>1/2$.
\end{rem}

\begin{rem}
   \label{rem:cv-Tn}
The contribution to the fluctuations  is given by the error of approximation
of $A_{n,1}(f)$  by $A_{n,2}(f)$,  see notations
from the proof of Theorem  \ref{theo:principal}. This corresponds to the
fluctuations  coming  from  the  approximation  of  the  branch  lengths
$(h_{n,v},  v\in  \tn)$  by  their   mean,  which  relies  on  the  explicit
representation on their joint  distribution given in Lemma \ref{lem:hh}.
In particular,  there is  no other contribution to  the fluctuations  from the
approximation  of the  continuum  tree $ \ct$  by the  sub-tree
$ \ct_{[n]}$.
\end{rem}

\subsection{Simply generated trees model}
\label{sec:main-simply}
We keep notations from Section \ref{sec:simply-tree} on simply generated
random tree. We  assume the weight sequence $\rp=(\rp (k),  k\in \N)$ 
        of        non-negative         real numbers        such        that
$\sum_{k\in  \N} \rp(k)>\rp(1)+\rp(0)$  and  $\rp(0)>0$  is generic.  As
stated in Section \ref{sec:simply-tree},  without loss of generality, we
will assume that $\rp$ is a critical probability distribution, that is:
\[
\sum_{k\in     \N}     \rp(k)=\sum_{k\in     \N}     k\rp(k)=1.
\]

The next result is a direct consequence of  \cite{d:ltcpcgwt} on the
convergence of the contour process of random discrete tree, see
Corollary \ref{cor:cvDk} given in Section \ref{sec:proof-cv-measure}. 
We keep notations and definitions of Section \ref{sec:proof-cv-measure},
with $H$ the normalized excursion of  the height function associated to the branching
mechanism $\psi$. 

\begin{cor}
   \label{cor:cv-measure}
   Let  $\rp$  be a  critical  probability  distribution on  $\N$,  with
   $1>\rp(1)+\rp(0)\geq  \rp(0)>0$,  which  belongs  to  the  domain  of
   attraction  of a  symmetric stable  distribution of  Laplace exponent
   $\psi(\lambda)= \kappa  \lambda ^\gamma$  with $\gamma\in  (1,2]$ and
   $\kappa>0$,  and  renormalizing  sequence $(a_p,  p\in  \N^*)$.   Let
   $\tau$  be  a   GW  tree  with  offspring   distribution  $\rp$,  and
   $\tau^{(p)}$    be   distributed    as   $\tau$    conditionally   on
   $\{|\tau|=p\}$. We have the following convergence in distribution:
\[
\frac{a_p}{p^2} \caa_{\tau^{(p)}}
\xrightarrow[p\rightarrow+\infty]{(d) } \Phi_H,
\]
where we endow the space of non-negative measures with the topology of
the weak converge and where the convergence is taken  along the
infinite  sub-sequence of  $p$  such that  $\P(|\tau|=p)>0$. 
\end{cor}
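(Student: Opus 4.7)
I prove the corollary by combining an integral representation of $\caa_{\tau^{(p)}}(x^{k-1})$ as a continuous functional of the rescaled height process of $\tau^{(p)}$ with the invariance principle of Corollary \ref{cor:cvDk}. The starting point is the algebraic identity, valid for any $\bt\in\T^{(p)}$ and $k\in\N^{*}$,
\begin{equation}
   \label{eq:plan-moment}
\caa_\bt(x^{k-1}) = \frac{1}{p^{k-1}}\sum_{v\in\bt}|\bt_v|^{k} = \frac{1}{p^{k-1}} \sum_{(w_1,\dots,w_k)\in\bt^k} \bigl(d_\bt(\emptyset, w_1\wedge\cdots\wedge w_k)+1\bigr),
\end{equation}
obtained by interpreting $|\bt_v|^k$ as the number of ordered $k$-tuples of descendants of $v$ and then counting common ancestors of each tuple. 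Listing the nodes of $\tau^{(p)}$ in lexicographic order as $w_0,\dots,w_{p-1}$, the discrete height function $H^{(p)}(i):=d_{\tau^{(p)}}(\emptyset,w_i)$ satisfies $d_{\tau^{(p)}}(\emptyset, w_{i_1}\wedge\cdots\wedge w_{i_k}) = \min\{H^{(p)}(l): \min_j i_j \leq l \leq \max_j i_j\}$. Setting $\tilde H^{(p)}(s):=(a_p/p)\, H^{(p)}(\lfloor ps\rfloor)$ for $s\in[0,1]$, a Riemann-sum rewriting of \reff{eq:plan-moment} (the trailing $+1$ contributes $O(1/p)$ and is negligible) gives
\begin{equation}
   \label{eq:plan-cont}
\frac{a_p}{p^2}\,\caa_{\tau^{(p)}}(x^{k-1}) = \int_{[0,1]^k} m_{\tilde H^{(p)}}(v_{(1)}, v_{(k)})\, dv_1\cdots dv_k + \varepsilon_p,
\end{equation}
where $v_{(1)}\leq\cdots\leq v_{(k)}$ is the increasing reordering of $(v_1,\dots,v_k)$ and $\varepsilon_p\to 0$ in probability.

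Next, the functional $h\mapsto \int_{[0,1]^k} m_h(v_{(1)},v_{(k)})\,dv$ on $\cc_+([0,1])$ is continuous in the uniform topology and dominated by $\|h\|_\infty$. Since Corollary \ref{cor:cvDk} asserts that $\tilde H^{(p)}$ converges in distribution, in the uniform topology, to the normalized excursion $H$ along the subsequence $\{p:\P(|\tau|=p)>0\}$, the continuous mapping theorem combined with a uniform integrability argument (from a uniform moment bound on $\|\tilde H^{(p)}\|_\infty$, itself part of the invariance principle) shows that the right-hand side of \reff{eq:plan-cont} converges in distribution to $\int_{[0,1]^k} m_H(v_{(1)},v_{(k)})\,dv$. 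A Fubini computation, slicing by level sets of $H$ and writing $L_j(r)$ for the lengths of the excursions of $H$ above level $r$, identifies the limit with $\Phi_H(x^{k-1})$:
\[
\int_{[0,1]^k}\! m_H(v_{(1)},v_{(k)})\,dv = \int_0^{\infty}\!dr\sum_j L_j(r)^k = \int_0^1\!ds\int_0^{H(s)}\! \sigma_{r,s}(H)^{k-1}\,dr = \Phi_H(x^{k-1}).
\]
Applied jointly to a finite family of monomials, the same argument yields joint convergence in distribution of the corresponding polynomial moments.

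To upgrade this to weak convergence of the measures $(a_p/p^2)\,\caa_{\tau^{(p)}}$ on $[0,1]$, I note that the case $k=1$ gives distributional convergence of the total masses to $\Phi_H(1)=\int_0^1 H(s)\,ds$, so the sequence of random measures on the compact interval $[0,1]$ is tight. Any weak subsequential limit shares all polynomial moments with $\Phi_H$, hence coincides with $\Phi_H$ in distribution by Weierstrass density of polynomials in $\cc([0,1])$. The main technical obstacle is not the algebra but rather the uniform integrability in the middle step needed to convert the distributional convergence of $\tilde H^{(p)}$ into convergence of the functional in \reff{eq:plan-cont}; this rests on the moment control embedded in Corollary \ref{cor:cvDk}, together with a careful bookkeeping of the rounding and ``$+1$'' errors that produce $\varepsilon_p$.
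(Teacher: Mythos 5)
Your proposal follows essentially the same route as the paper: the tuple-counting identity in your first display is the paper's \reff{eq:bt=D}, the replacement of the sum by an integral functional of a rescaled coding process (with the ``$+1$'' and rounding errors pushed into a negligible remainder) is the content of Lemma \ref{lem:d=D}, the invariance principle plus continuous mapping is exactly how Corollary \ref{cor:cvDk} is deduced from Theorem \ref{theo:d}, and the final passage from joint moment convergence to weak convergence of the random measures on $[0,1]$ is the same moment argument the paper uses. Three points of detail deserve comment. First, the paper works with the contour process $C^{\tau^{(p)}}$ rather than the lexicographic height function, and for a reason: your claimed identity $d(\emptyset, w_{i_1}\wedge\cdots\wedge w_{i_k})=\min\{H^{(p)}(l):\min_j i_j\le l\le \max_j i_j\}$ is false as an identity (for the tree consisting of a root with two children, the two children have most recent common ancestor at height $0$ while the minimum of the height function over the corresponding indices equals $1$); the correct statement is a two-sided bound with additive error $1$, which is the height-process analogue of \reff{eq:IuCd} and Lemma \ref{lem:d=D}. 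The total error over all $k$-tuples is then at most $p^k$, contributing $a_p/p\to 0$ after normalization, so your proof survives, but the identity must be weakened to an inequality (and one must invoke the height-process version of Duquesne's theorem, or switch to the contour process as the paper does). Second, the process-level convergence you need is Theorem \ref{theo:d}, not Corollary \ref{cor:cvDk}, which is precisely the moment convergence you are in the course of establishing; and no uniform integrability is required, since $h\mapsto\int_{[0,1]^k}m_h(v_{(1)},v_{(k)})\,dv$ is $1$-Lipschitz for the supremum norm, so the continuous mapping theorem applies directly to give convergence in distribution. Third, your identification of the limit by slicing along the level sets of $H$ is a clean and correct alternative to the paper's route, which first rewrites $\cd_k$ as a two-variable integral against $|x_2-x_1|^{k-2}m_{C^{\bt}}(x_1,x_2)$ via \reff{eq:=} and then invokes the deterministic Lemma \ref{lem:sigma-moment}.
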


We set for $\beta>0$ and $\bt\in \T$:
\[
Z_\beta^*(\bt)=\sum_{v\in \bt}|\bt_v|^{\beta}.
\]

\begin{cor}\label{cor:GW}
  Under the hypothesis and notations of Corollary \ref{cor:cv-measure}, we
  have the following convergence in distribution  
for all $\beta\geq 1$,
\begin{equation}
   \label{eq:cvZH}
\frac{a_p}{p^{\beta+1} } Z^{*}_\beta (\tau^{(p)})
\xrightarrow[p\rightarrow+\infty]{(d) } Z_\beta^H,
\end{equation}
with $Z_\beta^H$ given by \reff{eq:zb} and where the convergence is taken  along the
infinite  sub-sequence of  $p$  such that  $\P(|\tau|=p)>0$. 
\end{cor}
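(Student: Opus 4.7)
The plan is to reduce Corollary \ref{cor:GW} to Corollary \ref{cor:cv-measure} by testing the converging random measure $\frac{a_p}{p^2}\caa_{\tau^{(p)}}$ against the explicit continuous function $f(x)=x^{\beta-1}$. The key algebraic identity is
\[
\caa_{\bt}(x^{\beta-1})
=\sum_{v\in \bt} |\bt_v|\Bigl(\frac{|\bt_v|}{|\bt|}\Bigr)^{\beta-1}
=|\bt|^{-(\beta-1)}\sum_{v\in \bt}|\bt_v|^{\beta}
=|\bt|^{-(\beta-1)}Z_\beta^*(\bt).
\]
Applying this to $\bt=\tau^{(p)}$ (so $|\bt|=p$) and multiplying both sides by $a_p/p^{2}$ gives the exact relation
\[
\frac{a_p}{p^{\beta+1}}Z_\beta^*(\tau^{(p)})
=\frac{a_p}{p^{2}}\,\caa_{\tau^{(p)}}(x^{\beta-1}).
\]

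Next, I would invoke Corollary \ref{cor:cv-measure}, which asserts the weak convergence in distribution of the random measures $\frac{a_p}{p^2}\caa_{\tau^{(p)}}$ to $\Phi_H$ along the sub-sequence with $\P(|\tau|=p)>0$. For $\beta\ge 1$ the test function $x^{\beta-1}$ is bounded on $[0,1]$; for $\beta>1$ it is even continuous on the whole of $[0,1]$, so the continuous mapping theorem (applied to the weak-convergence functional $\mu\mapsto\mu(x^{\beta-1})$) yields
\[
\frac{a_p}{p^{2}}\,\caa_{\tau^{(p)}}(x^{\beta-1})
\xrightarrow[p\to\infty]{(d)}\Phi_H(x^{\beta-1}).
\]
The boundary case $\beta=1$ deserves a brief remark: under the convention $x^{0}=\ind_{(0,1]}$ the test function is discontinuous only at $0$, but $\caa_{\tau^{(p)}}(\{0\})=0$ (every $v$ contributes at the point $|\tau_v^{(p)}|/p>0$) and $\Phi_H(\{0\})=0$ a.s. (since $\sigma_{r,s}(H)>0$ whenever $r<H(s)$, by continuity of $H$). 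Hence, either one rewrites the test function as the continuous constant $1$ on the relevant sets, or one appeals to the Portmanteau theorem for bounded functions whose discontinuity set has zero limit mass.

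Finally, the limit is identified directly from the definitions: comparing \reff{eq:def-phi-h} and \reff{eq:zb},
\[
\Phi_H(x^{\beta-1})=\int_0^1 ds\int_0^{H(s)} dr\;\sigma_{r,s}(H)^{\beta-1}=Z_\beta^H,
\]
which yields the claimed convergence. There is no genuine obstacle here; the only point that requires a line of care is the $\beta=1$ case, handled as above. Note also that the argument uses only $\beta\ge 1$ (so that $x^{\beta-1}$ is bounded on $[0,1]$); extending to the range $\beta\in(1/\gamma,1)$ conjectured to be the ``global'' regime would require an unbounded-test-function extension of Corollary \ref{cor:cv-measure}, which is not claimed here.
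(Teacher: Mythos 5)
Your proof is correct and is essentially the paper's own deduction: Corollary \ref{cor:GW} is obtained from Corollary \ref{cor:cv-measure} by testing the converging random measures against $x^{\beta-1}$, using the identity $\frac{a_p}{p^{2}}\caa_{\tau^{(p)}}(x^{\beta-1})=\frac{a_p}{p^{\beta+1}}Z_\beta^*(\tau^{(p)})$ and $\Phi_H(x^{\beta-1})=Z_\beta^H$. Your extra care at $\beta=1$ (no mass at $0$ for either the discrete measures or $\Phi_H$) is a valid and welcome detail that the paper leaves implicit.
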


The proof of the first part of the next Lemma is given in Section
\ref{sec:H}. The second part, which is the representation formula,  is a
direct consequence of the deterministic Lemma \ref{lem:sigma-moment} in Section \ref{sec:proofZ} (with $\beta=a+1$). 

\begin{lem}
   \label{lem:ZH}
Assume the height function $H$ is associated to the Laplace exponent $\psi(\lambda)=\kappa \lambda^\gamma$ with $\gamma\in
(1,2]$ and $\kappa>0$.  We have that
a.s.  for all $1/\gamma\geq \beta>0$,   $Z_\beta^H=+\infty
$, that  a.s.  for all $\beta>1/\gamma$,  $Z_\beta^H$ is finite and 
\begin{equation}
   \label{eq:EZH}
\E\left[Z_\beta^H\right]=\inv{\gamma \kappa^{1/\gamma}}\, \frac 
{\Gamma\left(\beta-\inv{\gamma}\right)}{\Gamma\left(\beta+1-\frac{2}{\gamma}\right)}\cdot
\end{equation}
We   also   have   the
representation formulas $Z_1^H= \int_{0}^{1}H(s)\, ds$ and, for $\beta>1$,
$Z_\beta^H=  \inv{2} \beta  (\beta-1)\int_{[0,1]^2} |t-s|^{\beta  -2} \,
m_H(s,t)                                                      \,ds\,dt$.\\
\end{lem}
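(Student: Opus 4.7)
My plan is to isolate the purely deterministic content (which delivers both representation formulas) from the probabilistic content (which gives the first-moment identity and the a.s.-infinite regime).

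\textbf{Step 1: a deterministic identity via excursion intervals.} For any $h\in\cc_+([0,1])$ with $m_h(0,1)=0$, let $(I_j^{(r)})_j$ be the connected components of the open set $\{s\in[0,1]:h(s)>r\}$ and put $\ell_j^{(r)}=|I_j^{(r)}|$. The definition of $m_h$ forces $\sigma_{r,s}(h)=\ell_j^{(r)}$ for every $s\in I_j^{(r)}$ (with the offending boundary $s$-set of Lebesgue measure zero). Fubini in \reff{eq:zb} then yields the key formula
\[
Z_\beta^h=\int_0^\infty dr\,\sum_j\bigl(\ell_j^{(r)}\bigr)^\beta.
\]
Specialising to $\beta=1$ gives $\int_0^\infty|\{h>r\}|\,dr=\int_0^1h(s)\,ds$. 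For $\beta>1$, starting from the target expression, write $m_h(s,t)=\int_0^\infty\ind_{\{m_h(s,t)>r\}}\,dr$, swap integrals, use $\{(s,t):m_h(s,t)>r\}=\bigsqcup_j I_j^{(r)}\times I_j^{(r)}$, and compute $\int_{I\times I}|t-s|^{\beta-2}\,ds\,dt=2\ell^\beta/[\beta(\beta-1)]$ for an interval of length $\ell$. Matching yields the representation $Z_\beta^h=\tfrac12\beta(\beta-1)\int_{[0,1]^2}|t-s|^{\beta-2}m_h(s,t)\,ds\,dt$—this is the content of the deterministic Lemma \ref{lem:sigma-moment}.

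\textbf{Step 2: the first-moment formula for $\beta>1/\gamma$.} Taking expectation in the identity above,
\[
\E[Z_\beta^H]=\int_0^\infty\E\Big[\sum_j\bigl(\ell_j^{(r)}\bigr)^\beta\Big]\,dr.
\]
I would compute the integrand using the It\^o-type decomposition of stable L\'evy trees from \cite{dlg:rtlpsbp}: under the excursion measure $\mathbf N$ of the height process with $\psi(\lambda)=\kappa\lambda^\gamma$, conditionally on the subtree below level $r$ the excursions of $H$ above level $r$ form a Poisson point process of intensity $L^r\otimes\mathbf N$, where $L^r$ is the local time at level $r$ and $\mathbf N[\sigma\in d\sigma]=[\gamma\Gamma(1-1/\gamma)\kappa^{1/\gamma}]^{-1}\sigma^{-1-1/\gamma}\,d\sigma$ is obtained by inverting the Laplace transform $\mathbf N[1-e^{-\lambda\sigma}]=(\lambda/\kappa)^{1/\gamma}$. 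Disintegrating $\mathbf N$ with respect to $\sigma$ and applying the stable scaling identity $H(\cdot)\stackrel{(d)}{=}\sigma^{(\gamma-1)/\gamma}H^{(1)}(\cdot/\sigma)$ transfers the computation to the normalized excursion. A change of variable of the form $u=r^\gamma$ then reduces the $r$-integral to a Beta integral, yielding the announced value
\[
\E[Z_\beta^H]=\frac{1}{\gamma\kappa^{1/\gamma}}\,\frac{\Gamma(\beta-1/\gamma)}{\Gamma(\beta+1-2/\gamma)},
\]
with integrability at $r=0$ holding precisely when $\beta>1/\gamma$. As a sanity check, setting $\gamma=2$, $\kappa=\alpha$ recovers $(2\sqrt\alpha)^{-1}\Gamma(\beta-1/2)/\Gamma(\beta)$ from \reff{eq:EZ}.

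\textbf{Step 3: a.s. infinite for $0<\beta\le1/\gamma$.} Fix $r$ with $\P(\max H>r)>0$. Conditionally on $\{\max H>r\}$, the collection $(\ell_j^{(r)})_j$ a.s.\ has infinitely many atoms (stable L\'evy trees branch infinitely often at every level they reach), and their small-atom empirical distribution inherits the $\sigma^{-1-1/\gamma}$ tail of $\mathbf N$. Hence $\sum_j(\ell_j^{(r)})^\beta$ is comparable to $\int_{0+}\ell^\beta\cdot\ell^{-1-1/\gamma}\,d\ell$, which diverges exactly when $\beta\le1/\gamma$. So $\sum_j(\ell_j^{(r)})^\beta=+\infty$ a.s.\ on $\{\max H>r\}$, and Step 1 forces $Z_\beta^H=+\infty$ a.s.

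\textbf{Main obstacle.} The technical crux lies in Step 2: converting the It\^o/Poisson description under the infinite excursion measure $\mathbf N$ into a clean first-moment formula under the length-one conditioning requires a careful disintegration in $\sigma$ together with matching the local-time normalisation for the stable height process. In the Brownian regime $\gamma=2$, explicit Bessel-type descriptions of the spine make everything transparent, but in the genuinely stable case $\gamma\in(1,2)$ one must lean on the full machinery of exploration processes and snake decompositions of \cite{dlg:rtlpsbp} to justify the Poisson intensity, the scaling reduction, and the exchange of the $r$-integral with the expectation.
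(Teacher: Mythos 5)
Your Step 1 is correct and is in fact a more elementary route to the two representation formulas than the paper's own: the paper proves the identity $2\int_0^1 ds\int_0^{h(s)}dr\,\sigma_{r,s}(h)^{a}=a(a+1)\int_{[0,1]^2}|s'-s|^{a-1}m_h(s,s')\,ds\,ds'$ (Lemma \ref{lem:sigma-moment}) in three stages — integer exponents by direct computation, $\cc^2$ functions by Bernstein approximation, then general exponents by a cutoff argument — whereas your level-set identity $Z_\beta^h=\int_0^\infty dr\sum_j(\ell_j^{(r)})^\beta$ combined with $\{(s,t):m_h(s,t)>r\}=\bigcup_j I_j^{(r)}\times I_j^{(r)}$ (disjoint union) and $\int_{I\times I}|t-s|^{\beta-2}ds\,dt=2\ell^\beta/[\beta(\beta-1)]$ gives both formulas directly. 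The only point to make precise is that $\sigma_{r,s}$ is the length of the component of $\{h\ge r\}$ rather than of $\{h>r\}$ containing $s$; these differ only for countably many $r$ at each fixed $s$, which is harmless under the Fubini step.

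The genuine gap is Step 2, which carries the main content of the lemma (formula \reff{eq:EZH}) and is never actually executed; moreover the plan as stated hits a concrete obstruction. The Poisson description of the excursions above level $r$, with intensity $L^r\otimes\mathbf{N}$, holds under the $\sigma$-finite excursion measure $\mathbf{N}$ and \emph{not} under the normalized law $\mathbf{N}^{(1)}=\mathbf{N}[\,\cdot\,|\sigma=1]$ you need; and under $\mathbf{N}$ the integrand you propose to compute is identically infinite, since by Mecke's formula $\mathbf{N}[\sum_j(\ell_j^{(r)})^\beta]=\mathbf{N}[L^r]\cdot\mathbf{N}[\sigma^\beta]$ and $\mathbf{N}[\sigma^\beta]=c\int_0^\infty \sigma^{\beta-1-1/\gamma}d\sigma=+\infty$ for every $\beta>0$ (the divergence is at $\sigma=+\infty$ when $\beta>1/\gamma$, not at $r=0$ as your integrability remark suggests). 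Conditioning on $\sigma=1$ destroys the Poisson independence of the level-$r$ excursions, so ``disintegrate in $\sigma$ and scale'' is not a routine transfer — it is exactly where the work lies, and you defer it. The paper sidesteps this with a different decomposition: Bismut's spine decomposition makes $r\mapsto\sigma_{H(U)-r,U}$ a stable subordinator $S$ with exponent $\phi=1/(\psi^{-1})'$, so the Laplace transform $\Lambda(\lambda)=\mathbf{N}[Z_\beta^H\expp{-\lambda\sigma}]$ is finite and computable in one dimension, and the scaling relation $\mathbf{N}^{(a)}[Z_\beta^H]=a^{\beta+1-1/\gamma}\mathbf{N}^{(1)}[Z_\beta^H]$ together with the explicit density $\hat\pi$ inverts $\Lambda$ to yield \reff{eq:EZH}. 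Your Step 3 shares the same weakness (``comparable to'' and ``inherits the tail'' are not arguments, and again presuppose the Poisson picture under the conditioned law); the paper instead derives the a.s.\ divergence from $\limsup_{r\to0+}S_r/h(r)>0$ for $h(r)=r^{\gamma/(\gamma-1)}\log(|\log r|)^{-1/(\gamma-1)}$, applied along the Bismut spine, and then transfers from $\mathbf{N}$ to $\mathbf{N}^{(1)}$ by scaling. To salvage your route you would at least need to work with $\mathbf{N}[\expp{-\lambda\sigma}\sum_j(\ell_j^{(r)})^\beta]$ and the joint law of $(L^r,\sigma)$ under $\mathbf{N}$, which is no lighter than the spine computation.
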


\begin{rem}
   \label{rem:conj}
We conjecture that  for simply generated trees, under the
hypothesis  of    Corollary  \ref{cor:cv-measure}, there  is  a  phase
transition   at   $\beta=1/\gamma$   between   a   ``global''   regime
($\beta>1/\gamma$), where the convergence \reff{eq:cvZH} holds, and a ``local'' regime
($\beta\leq 1/\gamma$) where the convergence \reff{eq:cvZH} is
irrelevant as the right-hand-side of \reff{eq:cvZH}  is a.s. infinite.
\end{rem}

Using the Skorohod representation theorem, notice that
all the  convergences in  distribution of Corollary
\ref{cor:GW} hold
simultaneously.

\begin{rem}
   \label{rem:s2-finit}
   If  $\rp$ has  finite variance,  say  $\sigma^2$, then  one can  take
   $a_p=\sqrt{p}$     in     Corollaries    \ref{cor:cv-measure}     and
   \ref{cor:GW}  and  $H$  is  equal  to  $(2/\sigma)  B$  which
   corresponds to $\psi(\lambda)= \sigma^2\lambda  ^2/2$, see Remarks
   \ref{rem:s2-fini-0} and \ref{rem:s2-fini-1}. By scaling, or
   using that the limit in  Theorem \ref{theo:principal} does not depend
   on     $\alpha$,     we     deduce      that     in     this     case
   $\Phi_{H}=\frac{2}{\sigma}        \Phi_B$     and $Z^H_\beta=
   \frac{2}{\sigma}     Z_\beta^B$  in        Corollaries
   \ref{cor:cv-measure} and \ref{cor:GW}.
\end{rem}

\section{Preliminary Lemmas}
\label{sec:premlim-lem}
% We state in this section Lemmas, which will be used in Section
% \ref{sec:proof-main} to prove Theorem \ref{theo:principal}.

Recall $\ct$  is the real tree  coded by the excursion  $e$, see Section
\ref{sec:Te}  and $\ct_{[n]}  $ is  the (smallest)  sub-tree of  $\ct_e$
containing $n+1$  leaves picked  uniformly at random  and the  root, see
Section \ref{sec:T[n]}.  Recall $(\tn, (h_{n,v}, v\in  \tn))$ denote the
corresponding marked tree. Intuitively, for $v\in \tn$, $h_{n,v}$ is the
length  of the  branch  below  the branching  point  with  label $v$  in
$\ct_{[n]}$  (when keeping  the order  on the  leaves).  We  recall, see
\cite{a:III}, \cite{p:csp}  (Theorem 7.9) or \cite{dlg:rtlpsbp},  that the
density of $(h_{n,v}, v\in \tn)$ is, conditionally on $\tn$, given by:
\begin{equation}
   \label{eq:dens-h}
 f_{n}((h_ {n,v},v\in \tn))=2\frac{(2n)!}{n!}\, \alpha^{n+1}\, L_n\, 
 \expp{-\alpha L_n^{2}}\prod_{v\in \tn} \ind_{\{h_{n,v}>0\}},
 \end{equation} 
 where  $L_n=\sum_{v\in   \tn}h_{n,v}$  denotes  the  total   length  of
 $  \ct_{[n]}$.   Notice  that  the edge-lengths  have  an  exchangeable
 distribution and are independent of the shape tree $\tn$.  Furthermore,
 elementary computations  give that $(h_{n,v},v\in \tn)$, with $v\in
 \tn$ ranked in the lexicographic order,  has, conditionally on $\tn$ and
 $L_n$, the same
 distribution  as $(L_n  \Delta_1,  \ldots,  L_n \Delta_{2n+1})$,  where
 $\Delta_1, \ldots, \Delta_{2n+1}$ represents  the lengths of the $2n+1$
 intervals    obtained   by    cutting    $[0,1]$    at   $2n$   independent
 uniform random variables on $[0,1]$  and independent of $L_n$.  We thus
 deduce the following elementary Lemma.

\begin{lem}
   \label{lem:hh}
Conditionally on $\tn=\bt$, the random vector $(h_{n,v},v\in \bt)$ has the same
distribution as $\left(L_nE_v/S_\bt,v\in \bt\right)$,
where $(E_u, u\in \cu)$  are   independent exponential 
random variables with mean 1, independent of $\tn$ and $L_n$, and
$S_{\bt}=\sum_{v\in \bt}E_v$. 
\end{lem}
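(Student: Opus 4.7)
The plan is to combine the exchangeable representation recalled in the paragraph immediately preceding the lemma with the classical spacings (or Dirichlet) representation of uniform order statistics.

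First, I would invoke the statement just recalled: conditionally on $\tn=\bt$ and on $L_n$, the vector $(h_{n,v},v\in \bt)$, with the nodes of $\bt$ ranked in lexicographic order, has the same distribution as $(L_n\Delta_1,\ldots,L_n\Delta_{2n+1})$, where $(\Delta_1,\ldots,\Delta_{2n+1})$ are the spacings obtained by cutting $[0,1]$ at $2n$ independent uniform random variables, and these spacings are independent of $L_n$. Note that $|\bt|=2n+1$ since $\tn$ is a full binary tree with $n$ internal nodes.

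Next, I would appeal to the standard fact that such uniform spacings admit an exponential representation: if $(E_1,\ldots,E_{2n+1})$ are i.i.d.\ Exp$(1)$ random variables and $S=\sum_{i=1}^{2n+1}E_i$, then
\[
(\Delta_1,\ldots,\Delta_{2n+1}) \stackrel{(d)}{=} \bigl(E_1/S,\ldots,E_{2n+1}/S\bigr),
\]
and these exponentials can be chosen independent of $L_n$ (since the spacings themselves are). Re-indexing the exponentials along the nodes of $\bt$ in lexicographic order yields a family $(E_v,v\in \bt)$ of i.i.d.\ Exp$(1)$ variables with $S_\bt=\sum_{v\in \bt}E_v$, and
\[
(h_{n,v},v\in \bt) \stackrel{(d)}{=} \bigl(L_n E_v/S_\bt,\, v\in \bt\bigr)
\]
conditionally on $\tn=\bt$ and $L_n$, hence (by integrating out $L_n$) conditionally on $\tn=\bt$ alone.

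Finally, to match the statement of the lemma, in which the exponentials are indexed by the universal set $\cu$ and declared independent of $\tn$, I would take once and for all an i.i.d.\ Exp$(1)$ family $(E_u,u\in \cu)$ independent of $(\tn,L_n)$: on the event $\{\tn=\bt\}$, its restriction $(E_v,v\in \bt)$ is i.i.d.\ Exp$(1)$ and independent of $L_n$, so the previous identity in law applies with this realisation. There is no real obstacle here; the only point requiring a tiny bit of care is to justify that the exponential family used to represent the spacings can be taken independent of $L_n$, which is immediate because on the conditioning event the spacings are themselves independent of $L_n$.
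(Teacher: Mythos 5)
Your proposal is correct and follows exactly the route the paper intends: the lemma is presented as a direct consequence of the preceding observation that, conditionally on $\tn$ and $L_n$, the branch lengths are $L_n$ times the uniform spacings, combined with the classical Dirichlet representation of spacings as normalized i.i.d.\ exponentials. The only additional care you take (realising the exponentials on the universal index set $\cu$ independently of $(\tn,L_n)$) is exactly the bookkeeping the paper leaves implicit.
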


According      to     \cite{ad:13},      we      have     that      a.s.
$\lim_{n\rightarrow+\infty  }   L_n/\sqrt{n}=1/\sqrt{\alpha}$.  We  then
deduce          from           Lemma          \ref{lem:hh}          that
$(2n+1)\sqrt{\alpha}\,     h_{n,\emptyset}/\sqrt{n}$     converges     in
distribution    towards    $    E_\emptyset$     as    $n$    goes    to
infinity.           Intuitively,          we           get          that
$2\sqrt{\alpha n}\, \E[h_{n,\emptyset}]$ is of  order 1, for $v\in \tn$.
Recall  the random  measure $A_n$  is defined  in \reff{eq:def-An}.   We
introduce the random measure:
\[
A_{1,n}= 2\sqrt{\alpha n}\,\E[h_{n,\emptyset}]A_{n}.
\]

\begin{lem}
\label{lem:A1}
  Let $a\in [0, 1/2)$. There exists  a finite constant $C$ such that for
  all $f\in\mathcal{B}([0,1])$ and $n\in\N^*$, we have:
\[
\E\left[\lvert A_{n}(f)-A_{1,n}(f)\rvert\right] \le C\lVert
 x^{a}f\rVert_{\infty}\;n^{-1}.
\] 
\end{lem}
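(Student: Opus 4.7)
Since $c_n := 2\sqrt{\alpha n}\,\E[h_{n,\emptyset}]$ is a deterministic constant, we have $A_{1,n}(f) = c_n A_n(f)$ and consequently
\[
\E[|A_n(f) - A_{1,n}(f)|] = |1 - c_n|\,\E[|A_n(f)|].
\]
The proof therefore reduces to two estimates: (i) $|1 - c_n| \leq C/n$ uniformly in $n$, and (ii) $\E[|A_n(f)|] \leq C\,\norm{x^a f}_\infty$ uniformly in $n$.

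For (i), I would first use the density \reff{eq:dens-h}. Integrating it over the simplex $\{(h_v)_{v\in\tn}:\sum_v h_v = \ell\}$, whose $2n$-dimensional volume equals $\ell^{2n}/(2n)!$, shows that $\alpha L_n^2$ has the $\Gamma(n+1,1)$ distribution and is independent of the shape $\tn$. The edge lengths being exchangeable (immediate from \reff{eq:dens-h} or Lemma \ref{lem:hh}), $\E[h_{n,\emptyset}] = \E[L_n]/(2n+1)$, whence the closed form
\[
c_n = \frac{2\sqrt{n}}{2n+1}\cdot\frac{\Gamma(n+3/2)}{\Gamma(n+1)}.
\]
Stirling's asymptotic $\Gamma(n+3/2)/\Gamma(n+1) = \sqrt{n}\,(1+O(1/n))$ then gives $|1-c_n|=O(1/n)$ uniformly in $n\geq 1$.

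For (ii), the elementary bound $|f(x)|\leq \norm{x^a f}_\infty\,x^{-a}$ for $x\in(0,1]$ gives
\[
|A_n(f)| \leq \norm{x^a f}_\infty\,|\tn|^{a-3/2}\sum_{v\in\tn}|\tnv|^{1-a},
\]
so it is enough to establish the uniform moment estimate $\E\bigl[\sum_{v\in\tn}|\tnv|^{1-a}\bigr]\leq C\,|\tn|^{3/2-a}$ for $a\in[0,1/2)$. This can be obtained from the classical Catalan-model identity: the expected number of nodes $v\in\tn$ with $|\tnv|=s$ is of order $n^{3/2}/s^{3/2}$ for odd $s\in\{1,3,\dots,2n+1\}$ (via the cycle lemma or ratios of Catalan numbers). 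Multiplying by $s^{1-a}$ and summing gives order $n^{3/2}\sum_{s=1}^{O(n)} s^{-(1/2+a)} = O(n^{3/2-a})$, where the restriction $a<1/2$ is precisely what keeps the partial sum under control.

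The main obstacle is the uniform moment estimate in step (ii): the a.s.\ convergence provided by Corollary \ref{cor:cvZ} is not available at this point of the paper, and in any case would not directly yield the required uniform integrability of $A_n(x^{-a})$. Once (i) and (ii) are in hand, the claim follows by multiplication: $\E[|A_n(f) - A_{1,n}(f)|] = |1-c_n|\,\E[|A_n(f)|] \leq C\,\norm{x^a f}_\infty/n$, as required.
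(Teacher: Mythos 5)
Your reduction and your step (i) are exactly the paper's proof. The paper writes $\E[|A_n(f)-A_{1,n}(f)|]=|1-c_n|\,\E[|A_n(f)|]$ with $c_n=2\sqrt{\alpha n}\,\E[h_{n,\emptyset}]$ deterministic, bounds $|1-c_n|\le 1/(2n)$ from \reff{mh1} (itself obtained from the explicit law \reff{eq:dens-Ln} of $L_n$ and Gautschi's inequality rather than Stirling), and bounds $\E[|A_n(f)|]\le C_{1,1-a}\norm{x^af}_\infty$ by \reff{mom_1}. Your derivation of the $\Gamma(n+1,1)$ law of $\alpha L_n^2$ and of the closed form for $c_n$ is correct and reproduces \reff{eq:moments_Ln1} and \reff{mh1}.

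The difference, and the problem, is in step (ii). The paper disposes of $\E[|A_n(f)|]\le C\norm{x^af}_\infty$ by quoting the Fill--Kapur moment bound \reff{moment_ordre_p} with $k=1$ and $\beta=1-a>1/2$; your plan to prove this first moment directly from the fringe-subtree profile is viable, but the count you state is wrong and your arithmetic does not close. The expected number of $v\in\tn$ with $|\tnv|=2k+1$ equals $C_k\binom{2(n-k)}{n-k}/C_n\asymp n^{3/2}\,k^{-3/2}(n-k+1)^{-1/2}$, which for $k\le n/2$ is $\asymp n\,k^{-3/2}$, not $n^{3/2}k^{-3/2}$: you have dropped the factor $(n-k+1)^{-1/2}\asymp n^{-1/2}$. (Sanity check: your density sums over $k$ to order $n^{3/2}$ nodes, whereas $|\tn|=2n+1$.) Moreover, even taking your figures at face value, $\sum_{s=1}^{O(n)}s^{-(1/2+a)}\asymp n^{1/2-a}$ because $1/2+a<1$, so $n^{3/2}\sum_{s\le O(n)}s^{-(1/2+a)}\asymp n^{2-a}$ and not $O(n^{3/2-a})$; fed back into the proof this would only give $\E[|A_n(f)|]=O(\sqrt n)\,\norm{x^af}_\infty$ and the lemma would not follow. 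With the corrected profile the computation does work: splitting the sum at $k=n/2$ gives
\[
\E\Big[\sum_{v\in\tn}|\tnv|^{1-a}\Big]\asymp n^{3/2}\sum_{k=1}^{n} k^{-\frac{1}{2}-a}(n-k+1)^{-\frac{1}{2}}\asymp n^{\frac{3}{2}-a},
\]
which is exactly the bound you need, and the restriction $a<1/2$ enters precisely where you say it does. So the architecture is the same as the paper's and step (i) is fine, but step (ii) as written contains a genuine quantitative error that must be repaired (or replaced by the citation of \reff{moment_ordre_p}).
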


\begin{proof}
Let $a\in [0, 1/2)$ and $f\in\cb([0,1])$.
Using  \reff{mh1} in the Appendix, we deduce that for all $n\in\N^{*}$, we have
$\left| 1-2\sqrt{\alpha n}\;\E[h_{n,\emptyset}]\right|\le 1/2n$. Using
\reff{mom_1} in Lemma \ref{lem_mom_1}, we deduce that:
\[
\E[\lvert A_{n}(f)-A_{1,n}(f)\rvert] 
\le \frac{1}{2n}\E[|A_{n}(f)|]
\le \frac{ C_{1,1-a}}{2n}\,  \lVert x^{a}f\rVert_{\infty}.
\]
\end{proof}
Intuitively, $h_{n,v}$  is of the  same order of its  expectation. Since
the random variables  $(h_{n,v}, v\in \tn)$ are  exchangeable, we deduce
that $h_{n,v}$ is of the  same order as $\E[h_{n,\emptyset}]$.  Based on
this intuition, we  define the random measure $A_{2,n}$  as follows. For
$f\in \cb([0, 1])$, we set:
\[
A_{2,n}(f)=2\sqrt{\alpha n}\,\,  |\tn|^{-3/2}\sum_{v\in
  \tn }|\tnv|f\left(\frac{| \tnv |}{|\tn|}\right) \, h_{n,v}.
\]

\begin{lem}
\label{lem:A2}
Let $a\in[0, 1/2)$. There exists a finite constant $C$ such that for all
$f\in\cb([0,1])$ and  $n\in\N^*$, we have:
\[
\E[\left|A_{1,n}(f)-A_{2,n}(f)\right|]
\le C \lVert x^{a}f\rVert_{\infty}\;n^{-1/4}.
\]
\end{lem}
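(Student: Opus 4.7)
The idea is to express the difference $A_{1,n}(f)-A_{2,n}(f)$ as a single sum over $v\in\tn$ and then exploit the representation of the branch lengths given by Lemma \ref{lem:hh}. Writing $w_v=|\tnv|\,f(|\tnv|/|\tn|)$, one has
\[
A_{1,n}(f)-A_{2,n}(f)=2\sqrt{\alpha n}\,|\tn|^{-3/2}\sum_{v\in\tn} w_v\bigl(\E[h_{n,\emptyset}]-h_{n,v}\bigr).
\]
Conditionally on $\tn$, Lemma \ref{lem:hh} gives $h_{n,v}=L_n E_v/S_\tn$ with $(E_v)$ i.i.d.\ $\mathrm{Exp}(1)$ independent of $(L_n,\tn)$, and by the Dirichlet symmetry $\E[h_{n,\emptyset}]=\E[L_n]/|\tn|$. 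Hence one can split
\[
\E[h_{n,\emptyset}]-h_{n,v}=\underbrace{\Bigl(\tfrac{\E[L_n]}{|\tn|}-\tfrac{L_n}{S_\tn}\Bigr)}_{\text{global, }v\text{-free}}+\underbrace{\tfrac{L_n}{S_\tn}\bigl(1-E_v\bigr)}_{\text{local}},
\]
which induces a global error term $G_n$ (factorizing $\sum_v w_v$) and a local error term $E_n=2\sqrt{\alpha n}\,|\tn|^{-3/2}(L_n/S_\tn)\sum_v w_v(1-E_v)$.

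The plan is to handle these separately. For the global term, I would further split $\E[L_n]/|\tn|-L_n/S_\tn$ into $(\E[L_n]-L_n)/S_\tn$ and $\E[L_n](S_\tn-|\tn|)/(|\tn|S_\tn)$. Since $S_\tn$ is a sum of $|\tn|=2n+1$ independent $\mathrm{Exp}(1)$'s independent of $L_n$, both $|L_n-\E[L_n]|$ and $|S_\tn-|\tn||$ have $L^2$ norm $O(\sqrt{n})$, while $\E[L_n]=O(\sqrt{n})$ and $\sum_v w_v \le \|x^af\|_\infty\sum_v|\tn|^a|\tnv|^{1-a}$ is $O(\|x^af\|_\infty |\tn|^{3/2})$ by a moment estimate of the form of Lemma \ref{lem_mom_1}. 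A Cauchy--Schwarz bound then yields $\E[|G_n|]=O(\|x^af\|_\infty n^{-1})$, which is much better than the required $n^{-1/4}$.

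The main obstacle is controlling the local term $E_n$, whose magnitude produces the announced $n^{-1/4}$ rate. Conditioning on $(\tn,L_n)$, the variables $(1-E_v)_{v\in\tn}$ are i.i.d.\ centered with unit variance and independent of $(w_v)$, so
\[
\E\Bigl[\Bigl(\sum_v w_v(1-E_v)\Bigr)^2\,\Big|\,\tn\Bigr]=\sum_v w_v^{\,2}.
\]
The hypothesis $\|x^af\|_\infty<\infty$ with $a<1/2$ gives $xf(x)^2\le \|x^af\|_\infty^{\,2}$ on $(0,1]$, so that $|\tn|^{-5/2}\sum_v w_v^{\,2}=A_n(xf^2)$ has expectation $O(\|x^af\|_\infty^{\,2})$ by Lemma \ref{lem_mom_1}. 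Cauchy--Schwarz then gives $\E\bigl|\sum_v w_v(1-E_v)\bigr|=O(\|x^af\|_\infty |\tn|^{5/4})$. Combining this with the deterministic bound $L_n/S_\tn=O(1/\sqrt{n})$ in $L^1$ (from $\E[L_n]=O(\sqrt{n})$ and independence of $S_\tn$ and $L_n$ together with $\E[1/S_\tn]=O(1/n)$), one obtains
\[
\E[|E_n|]\;\le\; C\sqrt{n}\cdot n^{-3/2}\cdot n^{-1/2}\cdot \|x^af\|_\infty\, n^{5/4}=C\|x^af\|_\infty\, n^{-1/4},
\]
which is the desired estimate. Summing the global and local contributions concludes the proof.
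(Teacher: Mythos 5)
Your decomposition of $\E[h_{n,\emptyset}]-h_{n,v}$ into a $v$-free ``global'' part and a ``local'' part $\tfrac{L_n}{S_{\tn}}(1-E_v)$ is a legitimate alternative to the paper's argument (the paper instead bounds $\E[K_n^2\mid \tn]$ for $K_n=\frac{1}{2\sqrt\alpha}(A_{1,n}-A_{2,n})$ directly via exchangeability, using the explicit covariance bound $|\E[Y_{n,\emptyset}Y_{n,1}]|\le 1/(8\alpha n)$ of Lemma \ref{moment_hauteurs}; your splitting is essentially an explicit decorrelation of the same exchangeable family, and the dominant ``diagonal'' contribution producing the $n^{-1/4}$ rate is the same in both). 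However, as written your plan has two genuine gaps.

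First, in the global term, the bound $\lVert L_n-\E[L_n]\rVert_2=O(\sqrt n)$ is too weak to close the estimate: with it, the contribution of $(\E[L_n]-L_n)/S_{\tn}$ times $\sum_v w_v$ is only $2\sqrt{\alpha n}\,|\tn|^{-3/2}\cdot O(\sqrt n)\cdot O(1/n)\cdot O(|\tn|^{3/2})=O(1)$, which is useless. What you actually need (and what is true, by \reff{eq:moments_Ln2}) is $\Var(L_n)\le 1/\alpha$, i.e.\ $\lVert L_n-\E[L_n]\rVert_2=O(1)$; with that, and $\E[S_{\tn}^{-2}]^{1/2}=O(1/n)$ for the second piece, the global term is $O(n^{-1/2})$ — not the $O(n^{-1})$ you claim, but still negligible. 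Second, and more seriously, in the local term you multiply $\E[L_n/S_{\tn}]=O(n^{-1/2})$ by $\E\bigl|\sum_v w_v(1-E_v)\bigr|=O(|\tn|^{5/4})$ as if the two factors were independent; they are not, since $S_{\tn}=\sum_{v\in\tn}E_v$ is built from the very same exponentials appearing in $\sum_v w_v(1-E_v)$. One cannot factor the expectation of a product of dependent variables. The fix is to pull out $\E[L_n]$ (which \emph{is} independent) and apply Cauchy--Schwarz to the remaining product, using $\E[S_{\tn}^{-2}]^{1/2}=O(1/n)$ and $\E\bigl[(\sum_v w_v(1-E_v))^2\bigr]=\E[\sum_v w_v^2]=O(|\tn|^{5/2}\lVert x^af\rVert_\infty^2)$; this gives $2\sqrt{\alpha n}\,|\tn|^{-3/2}\cdot O(\sqrt n)\cdot O(n^{-1})\cdot O(n^{5/4})=O(n^{-1/4})$ and rescues the stated rate. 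With these two repairs your route goes through.
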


\begin{proof}
Let $a\in [0, 1/2)$ and $f\in\cb([0,1])$.
For $v\in \tn$, we set $Y_{n,v}=\sqrt{n} (\E[h_{n,v}]-h_{n,v})$ and
\[
K_{n}= \inv{2\sqrt{\alpha}} (A_{1,n}(f)-A_{2,n}(f))
=|\tn|^{-3/2}\sum_{v\in
  \tn}|\tnv|f\left(\frac{|\tnv|}{|\tn|}\right)Y_{n,v}.
\]
Using that $(h_{n,v}, v\in \tn)$ is exchangeable,
elementary computations give:
\[
\E\left[K_n^2 |\tn\right]
\leq  |\tn|^{-1/2} A_n(xf^2) \E[Y_{n,\emptyset}^2] + A_n(|f|)^2
|\E[Y_{n,\emptyset} Y_{n,1}] |.
\]
Then using \reff{mom_1} and \reff{mom_3} in Lemma \ref{lem_mom_1} and
\reff{mh4} in Lemma \ref{moment_hauteurs}, we get: 
\[
 \E[K_{n}^{2}]=\E\left[\E[K_{n}^{2}|\tn]\right]
\le \frac{C_{1,1}}{2\alpha \sqrt{2n+1}}  \lVert x^{1/2}f\rVert^ 2_{\infty} 
+\frac{C_{2, 1-a}^2}{8\alpha n}  \lVert
x^{a}f\rVert_{\infty}^{2}
\le \frac{c}{\sqrt{n}} \lVert x^{a}f\rVert_{\infty}^{2},
\]
for some finite constant $c$ which does not depend on $n$ and $f$. 
\end{proof}

Let $\cl_{n,v}=\{u\in \tn; \, v\preccurlyeq  u,\, k_u(\tn)=0\}$ be the
set of 
leaves of $\tn$ 
with ancestor $v$, and $|\cl_{n,v}|$ be its cardinal. Notice the number
of leaves of $\tnv$ is exactly $|\cl_{n,v}|$. 
We now approximate the multiplying factor $|\tnv|$ in $A_{2,n}$ by twice
the number of leaves in $\tnv$ as $2|\cl_{n,v}|=|\tnv|+1$. For this reason, we
set for $f\in \cb([0,1])$:
\[
A_{3,n}(f)= 4\sqrt{\alpha n}\,\,   |\tn|^{-3/2}\sum_{v\in
  \tn } |\cl_{n,v}|\, f\left(\frac{| \tnv |}{|\tn|}\right) \, h_{n,v}.
\]

\begin{lem}
\label{lem:A3}
Let $a\in[0, 1/2)$. For all
$f\in\mathcal{B}([0,1])$ and  $n\in\N^*$, we have:
\[
\E[\left|A_{2,n}(f)-A_{3,n}(f)\right|]
\le  \lVert x^{a}f\rVert_{\infty}\;n^{a-\inv{2}}.
\]
\end{lem}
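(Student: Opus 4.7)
The key observation is that for a full binary tree we have $2|\cl_{n,v}|=|\tnv|+1$, so the weights $|\tnv|$ and $2|\cl_{n,v}|$ differ by exactly one. Substituting into the definitions of $A_{2,n}$ and $A_{3,n}$ gives
\[
A_{2,n}(f)-A_{3,n}(f) = -\,2\sqrt{\alpha n}\,|\tn|^{-3/2}\sum_{v\in\tn}
 f\!\left(\frac{|\tnv|}{|\tn|}\right) h_{n,v},
\]
so the discrepancy is essentially a weighted sum of the branch lengths, without the $|\tnv|$ factor that made $A_{2,n}$ and $A_{3,n}$ of order $1$.

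Taking absolute values, the triangle inequality reduces the problem to estimating $\E\bigl[\sum_{v\in\tn}|f(|\tnv|/|\tn|)|\, h_{n,v}\bigr]$. Here I would exploit the conditional exchangeability recorded in Lemma~\ref{lem:hh}: conditionally on $\tn$, the lengths $(h_{n,v},v\in\tn)$ are distributed as $(L_n E_v/S_\bt, v\in\bt)$, so $\E[h_{n,v}\mid\tn]=\E[L_n]/|\tn|=\E[h_{n,\emptyset}]$, which is the same for every $v\in\tn$ and does not depend on the tree shape. Conditioning on $\tn$ and using this identity one gets
\[
\E\left[\sum_{v\in\tn}\left|f\!\left(\frac{|\tnv|}{|\tn|}\right)\right| h_{n,v}\right]
= \E[h_{n,\emptyset}]\;\E\left[\sum_{v\in\tn}\left|f\!\left(\frac{|\tnv|}{|\tn|}\right)\right|\right].
\]

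Next I would use $|f(x)|\le \|x^{a}f\|_{\infty}\, x^{-a}$ on $(0,1]$ together with the trivial bound $|\tnv|\ge 1$ to obtain the deterministic estimate
\[
\sum_{v\in\tn}\left|f\!\left(\frac{|\tnv|}{|\tn|}\right)\right|
\le \|x^{a}f\|_{\infty}\,|\tn|^{a}\sum_{v\in\tn}|\tnv|^{-a}
\le \|x^{a}f\|_{\infty}\,|\tn|^{a+1}.
\]
Combining these ingredients with the inequality $2\sqrt{\alpha n}\,\E[h_{n,\emptyset}]\le 1+1/(2n)$ supplied by \reff{mh1} yields
\[
\E\bigl[|A_{2,n}(f)-A_{3,n}(f)|\bigr]
\le \bigl(1+\tfrac{1}{2n}\bigr)\,|\tn|^{a-1/2}\,\|x^{a}f\|_{\infty}.
\]
Since $|\tn|=2n+1$ and $a-1/2<0$, the factor $(1+1/(2n))(2n+1)^{a-1/2}$ is dominated by $n^{a-1/2}$, which gives the stated bound.

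The potential obstacle is the reliance on the deterministic bound $|\tnv|\ge 1$: this is crude, but the factor $|\tn|^{-3/2}\cdot |\tn|^{a+1}=|\tn|^{a-1/2}$ is already the right order once multiplied by $\sqrt{n}\,\E[h_{n,\emptyset}]=O(1)$, so no sharper estimate on $\sum_v|\tnv|^{-a}$ is needed. The only delicate point is verifying that the small excess factor $(1+1/(2n))(2n+1)^{a-1/2}/n^{a-1/2}$ does not exceed $1$ uniformly in $a\in[0,1/2)$ and $n\in\N^{*}$; this is a direct (if slightly fiddly) numerical check, and any residual constant can be absorbed since $2^{a-1/2}<1$.
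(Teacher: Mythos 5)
Your argument follows the paper's proof almost verbatim in structure: you use the identity $2|\cl_{n,v}|=|\tnv|+1$ to reduce the difference to $2\sqrt{\alpha n}\,|\tn|^{-3/2}\sum_{v}|f|(|\tnv|/|\tn|)\,h_{n,v}$, then bound $|f|(|\tnv|/|\tn|)\le\norm{x^af}_\infty|\tn|^a$ via $|\tnv|\ge 1$, and finally control the expected total branch length. (The paper sums the $h_{n,v}$ into $L_n$ and uses $2\sqrt{\alpha n}\,\E[L_n]\le|\tn|$ from \reff{eq:moments_Ln2}; your detour through exchangeability and $\E[h_{n,\emptyset}]=\E[L_n]/|\tn|$ is the same computation.)

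There is, however, a quantitative slip in your last step that prevents you from reaching the stated constant-free bound. You invoke $2\sqrt{\alpha n}\,\E[h_{n,\emptyset}]\le 1+\frac{1}{2n}$, which leaves you with $(1+\frac{1}{2n})(2n+1)^{a-1/2}$, and you then claim the excess over $n^{a-1/2}$ can be absorbed because $2^{a-1/2}<1$. This is not uniform in $a$: the ratio is $(1+\frac{1}{2n})(2+\frac1n)^{a-1/2}$, which for $n=1$ tends to $\frac{3}{2}$ as $a\uparrow\frac12$, so the asserted inequality with constant $1$ fails for your chain of bounds. The fix is immediate and is what the paper does: \reff{mh1} gives the sharper one-sided bound $\E[h_{n,\emptyset}]\le\frac{1}{2\sqrt{\alpha n}}$, hence $2\sqrt{\alpha n}\,\E[h_{n,\emptyset}]\le 1$ with no excess factor, and then $|\tn|^{a-1/2}=(2n+1)^{a-1/2}\le n^{a-1/2}$ because $a-\frac12<0$. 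With that substitution your proof is complete and coincides with the paper's.
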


\begin{proof} 
Let $a\in [0, 1/2)$ and $f\in\cb([0,1])$.
Since $2|\cl_{n,v}|=|\tnv|+1$, we get that:
\[
\left|A_{2,n}(f)-A_{3,n}(f)\right|
\le 2\sqrt{\alpha n}\,|\tn|^{-3/2}\sum_{v\in
  \tn } |f|\left(\frac{| \tnv |}{|\tn|}\right) \, h_{n,v}.
\]
As $|\tnv|\geq 1$ and $a\geq 0$, we get that $|f|\left(\frac{| \tnv
    |}{|\tn|}\right) \leq  \norm{x^a f}_\infty |\tn|^{a}$. We deduce
that:
\[
\left|A_{2,n}(f)-A_{3,n}(f)\right|
\le 2\sqrt{\alpha n}\,L_n |\tn|^{a -\frac{3}{2}} \norm{x^af}_\infty.
\]
According to \reff{eq:moments_Ln2}, we have $2\sqrt{\alpha n}\,
\E[L_n]\leq  |\tn|$. We deduce that $
\E[\left|A_{2,n}(f)-A_{3,n}(f)\right|]
\leq  |\tn|^{a-\inv{2}} \norm{x^af}_\infty$. 
\end{proof}

We define  $\cnrk$ as the number  of leaves of the  sub-tree $\ct_{[n]}$
which are distinct from ${\bf{p}}(U_k)$  and such that their most recent
common ancestor  with ${\bf{p}}(U_k)$  is at  distance further  than $r$
from the root.  More precisely,  using the definition \reff{eq:def-m} of
$m$, we have:
\[
\cnrk+1=\Card\{ i\in \{1, \ldots, n+1\}, \, m(U_i, U_k)\geq r\}.
\]
In particular, we deduce from the construction of $\ct_{[n]}$ and $\tn$
that for $1\leq k\leq n+1$:
\begin{equation}
   \label{eq:s=i}
  \sum_{v\preccurlyeq u(U_k) }  f\left(\frac{| \tnv |}{|\tn|}\right) \, h_{n,v}
=\int_0^{e(U_k)}  dr\,  f\left(\frac{2\cnrk+1}{2n+1}\right),
\end{equation}
where $u(U_k)$ is the leaf in $\tn$ corresponding to the leaf 
${\bf{p}}(U_k)$ in $\ct_{[n]}$.

Recall that, for $v\in \tn$,   $\cl_{n,v}$ denotes the set of leaves of
$\tn$ with ancestor $v$ and $\cl(\tn)=\cl_{n,\emptyset}$ denotes the set
of leaves of $\tn$.
We deduce that:
\begin{align*}
A_{3,n}(f)
&= 4\sqrt{\alpha n}\,\,   |\tn|^{-3/2}\sum_{v\in
  \tn } |\cl_{n,v}|\, f\left(\frac{| \tnv |}{|\tn|}\right) \, h_{n,v}\\
&= 4\sqrt{\alpha n}\,\,   |\tn|^{-3/2}\sum_{u\in \cl(\tn)}
  \sum_{v\preccurlyeq u }  f\left(\frac{| \tnv |}{|\tn|}\right) \, h_{n,v}\\
&= 4\sqrt{\alpha n}\,\,   |\tn|^{-3/2}\sum_{k=1}^{n+1}
\int_0^{e(U_k)}  dr\,  f\left(\frac{2\cnrk+1}{2n+1}\right),
\end{align*}
where  we used  \reff{eq:s=i} for  the  last equality.   Notice that  by
construction,  conditionally  on  $e$  and $U_k$,  the  random  variable
$\cnrk$  is  binomial with  parameter  $(n,  \sigma_{r,U_k})$. For  this
reason,  we consider  the following  approximation of  $A_{3,n}(f)$.  For
$f\in \cb([0,1])$ non-negative, we set:
\[
A_{4,n}(f) = 
 4\sqrt{\alpha n}\,\,   |\tn|^{-3/2}\sum_{k=1}^{n+1}
\int_0^{e(U_k)}  dr\,  f(\sigma_{r,U_k}).
\]

\begin{lem}
\label{lem:A4}
We have the following properties.
\begin{enumerate}
\item[(i)] For $a\in (0,1)$, there exists a finite constant $C(a)$ such that
  if $f\in \cb([0,1])$ is locally Lipschitz  continuous on $(0,1]$,    we have for all $n\in\N^*$:
\[
\E[\left|A_{3,n}(f)-A_{4,n}(f)\right|]
\le C(a)\norm{ x^{a}f^{'}}_{\text{esssup}}\,n^{-1/2}.
\]
\item[(ii)] If $a\in (-1/2,0]$, there exists a finite constant $C(a)$ such that  we have for all $n\in\N^*$:
\[
\E[\left|A_{3,n}(x^a)-A_{4,n}(x^a)\right|]
\le C(a)\, n^{-(2a+1)/8}.
\]
% \item 
% There exists a finite constant $C$ such that for all ${\lambda}>1/2$ and all
% $f$ uniformly H\"older continuous with exponent ${\lambda}$,  we have for all $n\in\N^*$:
% \[
% \E[\left|A_{3,n}(f)-A_{4,n}(f)\right|]
% \le C \lVert f\rVert_{C^{0,{\lambda}}}\;n^{-\lambda/2},
% \]
% where $\lVert f\rVert_{C^{0,{\lambda}}}=\sup_{0\leq x<y\leq 1}\frac{|f(x)-f(y)|}{|x-y|^{\lambda}}$ ;
\end{enumerate}
\end{lem}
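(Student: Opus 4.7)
The plan is to compare the two sums term-by-term, exploiting the construction of $\ct_{[n]}$: conditionally on the Brownian excursion $e$ and the uniform points $U_1,\dots,U_{n+1}$, the variable $\cnrk$ follows a $\mathrm{Binomial}(n,\sigma_{r,U_k})$ distribution. By exchangeability of the $U_k$'s, all $n+1$ contributions to $A_{3,n}(f)-A_{4,n}(f)$ are identically distributed, so it suffices to bound the single-term error
\[
\Delta_n(f):=\E\!\left[\int_0^{e(U_1)}\!dr\,\bigl|f\bigl(Z_{n,r}\bigr) - f(\sigma_{r,U_1})\bigr|\right],\qquad Z_{n,r}:=\frac{2{\mathcal N}_{n,r,U_1}+1}{2n+1},
\]
and then multiply by the prefactor $(n+1)\cdot 4\sqrt{\alpha n}(2n+1)^{-3/2}=O(1)$.

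For part (i), the locally Lipschitz assumption and the mean value theorem yield $|f(y)-f(x)|\le \|x^af'\|_{\text{esssup}}\,|y-x|\,\min(x,y)^{-a}$ on $(0,1]$. I would split the $r$-integration according to whether $\sigma_{r,U_1}$ is above or below the threshold $1/n$. On $\{\sigma_{r,U_1}>1/n\}$, Chebyshev's inequality shows $Z_{n,r}\ge \sigma_{r,U_1}/2$ with high conditional probability; the binomial $L^1$-bound $\E[|Z_{n,r}-\sigma_{r,U_1}|\,|\,e,U_1]\le C\sqrt{\sigma_{r,U_1}/n}$ then reduces matters to estimating $\E\bigl[\int_0^{e(U_1)}\sigma_{r,U_1}^{1/2-a}\,dr\bigr]$, which is finite by Lemma \ref{lem:subexcursion} for $\beta=3/2-a>1/2$. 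On the complementary small-$\sigma$ region, use $Z_{n,r}\ge 1/(2n+1)$ together with the smallness of the range. Both contributions are of order $n^{-1/2}$.

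For part (ii), the case $a=0$ is immediate since $A_{3,n}(x^0)=A_{4,n}(x^0)$. For $a\in(-1/2,0)$, I would combine the crude bound $|y^a-x^a|\le 2\min(x,y)^a$ (valid because $a<0$) with the mean-value bound $|y^a-x^a|\le |a|\,|y-x|\,\min(x,y)^{a-1}$; interpolation with parameter $\eta\in[0,1]$ yields
\[
|y^a-x^a|\le C(\eta,a)\,|y-x|^\eta\,\min(x,y)^{a-\eta}.
\]
Applying this pointwise, together with the Jensen-based binomial bound $\E[|Z_{n,r}-\sigma_{r,U_1}|^\eta\,|\,e,U_1]\le C(\sigma_{r,U_1}/n)^{\eta/2}+C/n^\eta$, the floor $\min(Z_{n,r},\sigma_{r,U_1})\ge 1/(2n+1)$, and Lemma \ref{lem:subexcursion} to control the resulting $\sigma$-power integrals, reduces the task to optimizing the free parameter $\eta$ against the integrability threshold $\beta>1/2$: this balancing produces the stated exponent $(2a+1)/8$.

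The main obstacle is the balancing act in part (ii). When $\sigma_{r,U_1}$ is of order $1/n$ or smaller, the binomial concentration is weak and the factor $\min(Z_{n,r},\sigma_{r,U_1})^{a-\eta}$ may reach its floor $(2n+1)^{\eta-a}$, while at the same time the global integrals of $\sigma^{a-\eta/2}$ must remain finite via Lemma \ref{lem:subexcursion}. A careful threshold-based decomposition of the $r$-integration range, combined with a fine-tuned choice of $\eta$, is what ultimately yields the exponent $(2a+1)/8$.
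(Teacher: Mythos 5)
Your reduction to a single binomial comparison (exchangeability plus the $O(1)$ prefactor) is exactly the paper's starting point, and the role of $Z_\beta$-integrability from Lemma \ref{lem:subexcursion} is correctly identified. But the mechanism you propose for part (i) does not deliver the rate $n^{-1/2}$ for all $a\in(0,1)$. Write $Z=\frac{2\cnrs+1}{2n+1}$ and $p=\sigma_{r,s}$. On the event $\{Z<p/2\}$ with $p>1/n$, Chebyshev only gives $\P(Z<p/2\mid e)\le 4/(np)$, which is of order one when $p\asymp 1/n$; combined with the floor $\min(Z,p)^{-a}=Z^{-a}\le (2n+1)^{a}$ and $|Z-p|\le p$, the bad-event contribution integrates to $O(n^{a-1})$, which exceeds $n^{-1/2}$ for $a>1/2$. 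Worse, on your small-$\sigma$ region $\{p\le 1/n\}$ the mean-value bound produces the factor $p^{-a}$, and $\E\big[\int_0^1ds\int_0^{e(s)}\ind_{\{\sigma_{r,s}\le 1/n\}}\sigma_{r,s}^{-a}\,dr\big]=+\infty$ for $a\ge 1/2$ since $Z_{1-a}=+\infty$ a.s. there; falling back on boundedness of $f$ only yields $O(n^{-1/2+\varepsilon})$. The paper sidesteps all of this in Lemma \ref{lem_binomial}(ii): it first writes $|f(y)-f(p)|\le \frac{\norm{x^af'}_{\text{esssup}}}{1-a}|y^{1-a}-p^{1-a}|$ and then uses $|x^{1-a}-y^{1-a}|\le x^{-a}|x-y|$ twice, placing the negative power once on the deterministic $p$ and once on the binomial (whose negative moment is $\le (p(n+1))^{-a/2}$ after Jensen). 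This lands on $p^{-a/2}+p^{1/2-a}$, both of which have exponent strictly greater than $-1/2$, so $\E[Z_{1-a/2}]<\infty$ closes the argument with no good/bad event split. That halving of the exponent (from $p^{-a}$ to $p^{-a/2}$) is the missing idea.

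For part (ii) the gap is of the same nature. Your main plan applies the global floor $\min(Z,p)\ge 1/(2n+1)$ together with the interpolation $|Z^a-p^a|\le C|Z-p|^\eta\min(Z,p)^{a-\eta}$ and Jensen; this yields a bound of order $n^{\eta-a}\cdot n^{-\eta/2}=n^{\eta/2-a}$, which diverges since $a<0$ and $\eta>0$. The floor is only affordable where $\min(Z,p)$ is genuinely small, so one is forced into the three-region decomposition at a threshold $n^{-\gamma}$ that the paper uses: on $\{p>2n^{-\gamma},\,Z>n^{-\gamma}\}$ the Lipschitz constant of $x^a$ is only $n^{\gamma(1-a)}$ and the $L^1$ binomial bound gives $n^{\gamma(1-a)-1/2}$; on $\{p>2n^{-\gamma},\,Z\le n^{-\gamma}\}$ the binomial has undershot by at least $n^{-\gamma}$ and one needs a deviation bound (the paper uses Hoeffding) to kill the factor $n^{\gamma|a|}$; on $\{p\le 2n^{-\gamma}\}$ one bounds $|Z^a-p^a|\le Z^a+p^a$, uses the negative binomial moment, and pays $n^{-\gamma\eta}\E[Z_{a-\eta+1}]$ with $\eta<a+1/2$. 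The choice $\gamma=1/3$, $\eta=3(2a+1)/8$ then produces the exponent $(2a+1)/8$. You gesture at this decomposition in your closing paragraph, but the argument you actually lay out (global floor plus free parameter $\eta$) would not close, and the treatment of the undershoot region --- where neither the floor nor Chebyshev-with-your-parameters is obviously sufficient without checking the exponents --- is absent.
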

\begin{rem}
   \label{rem:holder}
We can extend (i) of Lemma \ref{lem:A4} to get that for uniformly H\"older
continuous function $f$ with exponent $\lambda>1/2$, we have
$\E[\left|A_{3,n}(f)-A_{4,n}(f)\right|]=O(n^{-\lambda/2})$. This allows to
extend Proposition \ref{prop:fluctuation} to such functions. 
\end{rem}

\begin{proof}
  For  $s\in  [0,1]$,  let  $\cnrs$  be  a  random  variable  which  is,
  conditionally    on     $e$,    binomial    with     parameter    $(n,
  \sigma_{r,s})$.
  Notice, this is  consistent with the definition of  $\cnrk$.  Hence we
  get, for $f\in \mathcal{B}([0,1])$,
\begin{align}
\nonumber \E\left[\left|A_{3,n}(f)-A_{4,n}(f)\right|\right]
 &\le 4\sqrt{\alpha
   n}|\tn|^{-\frac{3}{2}}\sum_{k=1}^{n+1}\E\left[\int_{0}^{e(U_{k})}
   \left| f\left(\frac{2\cnrk+1}{2n+1}\right) - f(\sigma_{r,U_k})\right|dr\right] \\ 
 \label{eq:A345}
&\le 4 \sqrt{\alpha}\int_{0}^{1}ds\;\E\left[\int_{0}^{e(s)}dr \, \E\left[\left|
  f\left(\frac{2\cnrs+1}{2n+1}\right)-f(\sigma_{r,s})\right|\,\big|\, e\right]\,dr\right]. 
 \end{align}
\\

We first prove  property (i). Let $a\in (0,1)$ and  $f\in \cb([0,1])$ be
locally    Lipschitz   continuous    on   $(0,1]$.    Using   (ii)    of
Lemma~\ref{lem_binomial},   we   have   that  for   $s\in   (0,1)$   and
$r\in (0,e(s))$,
\begin{equation}
   \label{eq:A34}
\E\left[\left|
  f\left(\frac{2\cnrs+1}{2n+1}\right)-f(\sigma_{r,s})\right|\,\big|\,
e\right]
 \leq \frac{\norm{ x^{a}f'}_{\text{esssup}}}{1-a}\, 
\left(\sigma_{r,s}^{-\frac{a}{2}}
+ \sigma_{r,s}^{\frac{1}{2}-a}\right)n ^ {-1/2}.
\end{equation}
We    recall    that
$Z_{\beta}=\int_{0}^{1}ds\int_{0}^{e(s)}dr\;\sigma_{r,s}^{\beta-1}$ for
$\beta>0$.            Thus, we have
$\E\left[Z_{\frac{3}{2}-a}\right]\leq \E\left[Z_{1-\frac{a}{2}}\right]$;
the last term being finite thanks to  Lemma~\ref{lem:subexcursion}.
 We deduce from
\reff{eq:A345} and \reff{eq:A34} that
\[
\E\left[\left|A_{3,n}(f)-A_{4,n}(f)\right|\right]\leq  8
\sqrt{\alpha}\frac{\norm{x^{a}f'}_{\text{esssup}}}{1-a}\,
\E\left[Z_{1-\frac{a}{2}}\right] n ^ {-1/2}.
\]
This achieves the proof of property (i).\\

We now prove property (ii). We consider $a\in (-1/2, 0)$ and $f(x)=x^a$,
as the case $a=0$ is obvious. Let ${\gamma}>0$. We write:
\[
\int_{0}^{1}ds\;\E\left[\int_{0}^{e(s)} dr\, \E\left[\left|
  \left(\frac{2\cnrs+1}{2n+1}\right)^a-\sigma_{r,s}^a\right|\,\big|\,
e\right]\right] =\kappa_1+\kappa_2+\kappa_3,
\]
with $\kappa_i=\int_{0}^{1}ds\;\E\left[\int_{0}^{e(s)}dr\, \E\left[\ind_{D_i} \left|
  \left(\frac{2\cnrs+1}{2n+1}\right)^a-\sigma_{r,s}^a\right|\,\big|\,
e\right]\right]$ and:
\[
D_1=\left\{\sigma_{r,s}>2n^{-{\gamma}},\frac{2\cnrs+1}{2n+1}>n^{-{\gamma}}\right\}, 
\quad
D_2=\left\{\sigma_{r,s}>2n^{-{\gamma}},\frac{2\cnrs+1}{2n+1}\leq
  n^{-{\gamma}}\right\}, 
\]
and $D_3=(D_1\bigcup  D_2)^c$. For $\kappa_1$, we have:
\begin{equation}
\label{eq:1-lem:A4}
\kappa_1
 \leq
{n^{{\gamma}(1-a)}}\int_{0}^{1}\!ds\,\E\left[\int_{0}^{e(s)}\E\left[\left|
 \frac{2\cnrs+1}{2n+1}-\sigma_{r,s}\right|\,\big|\,
e\right]dr\right] 
 \leq {\E \left[\int_0^1  \!\!e(s)\, ds\right]} \, n^{{\gamma}(1-a)-\inv{2}},
\end{equation}
where   we   used,   thanks   to  \reff{ineq_3}   with   $b=1+a$,   that
$|x^a  -  y^a|\leq  x^a  y^{-1} |x-y|  \leq  n^{{\gamma}(1-a)}|x-y|$  if
$x,y\in [n ^ {-\gamma}, +\infty )$  for the first inequality and for the
second that if $X$ is a binomial random variable with parameter $(n,p)$,
then we have:
\[
\E\left[\left|\frac{2X+1}{2n+1} - p\right|\right]^2\leq 
\E\left[\left(\frac{2X+1}{2n+1} - p\right)^2\right]\leq
\inv{2n+1}\leq  \inv{n}\cdot
\]

We give an upper bound of 
 $\kappa_2$.  We first recall Hoeffding's inequality: if $X$ is a binomial random variable with 
parameter $(n,p)$, and $t>0$, then we have  ${\mathbb P}(np-X>nt)\leq
\exp(-2nt^2)$. Using that $\{p -\frac{2X+1}{2n+1}> n^{-\gamma} \}\subset \{np- X >
n^{1-\gamma}\}$, we deduce that:
\begin{equation}\label{eq:Hoeffding}
{\mathbb P}\left(p- \frac{2X+1}{2n+1}>n^{-\gamma}\right)
\leq
{\mathbb P}\left(np-{X} >n^{1-\gamma}\right)
\leq \exp\left(-2 n^{1-2\gamma}\right).
\end{equation}

Notice that on $D_2$, we have $ 0\leq
\left(\frac{2\cnrs+1}{2n+1}\right)^a-\sigma_{r,s}^a \leq
\left(\frac{2\cnrs+1}{2n+1}\right)^a\leq (2n+1)^{-\gamma a}$ as well as
$\sigma_{r,s} - \frac{2\cnrs+1}{2n+1}> n^{-\gamma}$. 
Hence, we obtain:
\begin{align}
\nonumber
\kappa_2
&\leq
(2  n+1)^{-{\gamma}a}\int_{0}^{1}ds\,\E\left[\int_{0}^{e(s)}
   \P\left(\sigma_{r,s}  
  - \frac{2\cnrs+1}{2n+1}>n^{-{\gamma}} \Big|
  e\right)\,dr\right]\\
&\leq\E[Z_1] \, (2  n+1)^{-{\gamma}a}\expp{-2{n^{1-2\gamma}}}.
\label{eq:2-lem:A4}
\end{align} 

Finally, we consider  $\kappa_3$. 
Let $\eta\in (0,a+1/2)$. We have:
\begin{multline*}
 \E\left[\int_{0}^{e(s)}\ind_{\{\sigma_{r,s}\leq
    2n^{-{\gamma}} \}} 
\E\left[\left|
  \left(\frac{2\cnrs+1}{2n+1}\right)^a-\sigma_{r,s}^a\right|\,\big|\,
e\right]\,dr\right]\\  
\begin{aligned}
&\leq  
\E\left[\int_{0}^{e(s)}\ind_{\{\sigma_{r,s}\leq
    2n^{-{\gamma}} \}} 
\E\left[
  \left(\frac{2\cnrs+1}{2n+1}\right)^a+\sigma_{r,s}^a\,\big|\,
e\right]\,dr\right]\\
&\leq  
3 \E\left[\int_{0}^{e(s)}\ind_{\{\sigma_{r,s}\leq
    2n^{-{\gamma}} \}} \sigma_{r,s}^a \,dr\right]\\
&\leq  3\cdot 2^{\eta}n^{-\gamma\eta}\;\E\left[\int_{0}^{e(s)}dr\,\sigma_{r,s}^{a-\eta}\right],
\end{aligned}
\end{multline*}
where we used  (i) of Lemma~\ref{lem_binomial} for the second inequality. 
Recall  that $D_3=
\left\{\sigma_{r,s}\leq 2n^{-{\gamma}} \right\}$. We deduce that:
\begin{equation}
\label{eq:3-lem:A4}
\kappa_3\leq  \int_0^1 ds\, 3\cdot
2^{\eta}n^{-\gamma\eta}\;\E\left[\int_{0}^{e(s)}dr\,\sigma_{r,s}^{a-\eta}\right]
= 3\cdot
2^{\eta}n^{-\gamma\eta}\; \E[Z_{a-\eta+1}].
\end{equation}

Choose    $\gamma=1/3$    and    $\eta=3(2a+1)/8$.     Thanks to Lemma
\ref{lem:subexcursion}, we   get  that $\E \left[\int_0^1  e(s)\,
  ds\right]=\E[Z_1]$ is finite and that 
$\E[Z_{a-\eta+1}]$  is also finite
since   $a-\eta+1>1/2$  as   $a>  -1/2$.   Therefore,  we   deduce  from
\reff{eq:A345} and then 
\reff{eq:1-lem:A4}, \reff{eq:2-lem:A4} and \reff{eq:3-lem:A4} that there exists a finite constant $C(a)$ such
that  we have for all $n\in\N^*$: 
\[
\E[\left|A_{3,n}(x^a)-A_{4,n}(x^a)\right|]
\le C(a)\, n^{-(2a+1)/8}.
\]
\end{proof}

% 3. Let ${\lambda}>1/2$ and  $f$ uniformly H\"older continuous with
% exponent ${\lambda}$ on $[0,1]$. 

% \begin{align*}
%  \E\left[\left|A_{3,n}(f)-A_{4,n}(f)\right|\right]
% &\le\sqrt{\alpha}\int_{0}^{1}ds\;\E\left[\int_{0}^{e(s)}\E\big[| f\left(\frac{2\cnrs+1}{2n+1}\right)-f(\sigma_{r,s})|\big|e\big]\,dr\right]\\
% &\le\sqrt{\alpha}\lVert f\rVert_{C^{0,{\lambda}}}\int_{0}^{1}ds\;\E\left[\int_{0}^{e(s)}\E\big[| \frac{2\cnrs+1}{2n+1}-\sigma_{r,s}|^{\lambda}\big|e\big]\,dr\right]\\
% &\le\sqrt{\alpha}\lVert f\rVert_{C^{0,{\lambda}}}\int_{0}^{1}ds\;\E\left[\int_{0}^{e(s)}\E\big[| \frac{2\cnrs+1}{2n+1}-\sigma_{r,s}|^{\lambda}\big|e\big]\,dr\right].
%  \end{align*}
% Using that $| \frac{2\cnrs+1}{2n+1}-\sigma_{r,s}|\leq 1$, we have that
% the last quantity is a non-increasing function of ${\lambda}$. We may
% then assume that ${\lambda}\in (1/2,2]$. Using that if $X$ is a
% binomial random variable with parameter $(n,p)$, ${\text{Var}}(X)\leq
% \sqrt{pn}$, and   H\"older inequality, we have that for some constant
% $C$ which does not depend on $n$, ${\lambda}$ and $f$, 

% \begin{align*}
%  \E\left[\left|A_{3,n}(f)-A_{4,n}(f)\right|\right]
% \le C\lVert f\rVert_{C^{0,{\lambda}}}n^{-{\lambda}/2}\int_{0}^{1}ds\;\E\left[\int_{0}^{e(s)}\sigma_{r,s}^{\lambda/2}\,dr\right]
% \leq C\lVert f\rVert_{C^{0,{\lambda}}}n^{-1/4}\cdot\\
%  \end{align*}

\begin{lem}
\label{lem:LGN}
For all $f\in\mathcal{B}([0,1])$ such that $f\geq 0$ and $\lVert
x^{a}f\rVert_{\infty}<+\infty$ for some $a \in [0, 1/2)$, we have:
 \[
A_{4,n}(f) 
\,\xrightarrow[n\rightarrow+\infty]{a.s.} \,
\sqrt{2\alpha} \, \, \Phi_e(f). 
\]
\end{lem}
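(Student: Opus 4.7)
The plan is to rewrite $A_{4,n}(f)$ as a normalized sum of i.i.d.\ quantities (conditionally on $e$) and apply the strong law of large numbers.

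First, I would observe that the $U_k$ for $k=1,\dots,n+1$ are i.i.d.\ uniform on $[0,1]$ and independent of $e$. Therefore, conditionally on $e$, the random variables
\[
X_k = \int_0^{e(U_k)} dr\, f(\sigma_{r,U_k}), \qquad k=1,\dots,n+1,
\]
are i.i.d., with common conditional expectation
\[
\E[X_k \mid e] = \int_0^1 ds \int_0^{e(s)} dr\, f(\sigma_{r,s}) = \Phi_e(f).
\]
Rewriting $A_{4,n}(f)$, we have
\[
A_{4,n}(f) = \frac{4\sqrt{\alpha n}\,(n+1)}{(2n+1)^{3/2}} \cdot \frac{1}{n+1} \sum_{k=1}^{n+1} X_k,
\]
and a direct computation shows the deterministic prefactor satisfies $4\sqrt{\alpha n}\,(n+1)/(2n+1)^{3/2} \to \sqrt{2\alpha}$ as $n\to\infty$. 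Hence the lemma reduces to proving the conditional strong law
\[
\frac{1}{n+1}\sum_{k=1}^{n+1} X_k \xrightarrow[n\to\infty]{\text{a.s.}} \Phi_e(f).
\]

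The main step (and the only real obstacle) is the verification that $\E[|X_1|\mid e] = \Phi_e(|f|)$ is a.s.\ finite, so that Kolmogorov's strong law applies conditionally on $e$. For this I use the hypothesis that $\|x^a f\|_\infty < +\infty$ for some $a\in[0,1/2)$: since $|f(x)|\leq \|x^a f\|_\infty\, x^{-a}$ for $x\in(0,1]$, we get
\[
\Phi_e(|f|) \leq \|x^a f\|_\infty \int_0^1 ds \int_0^{e(s)} dr\, \sigma_{r,s}^{-a} = \|x^a f\|_\infty\, Z_{1-a}.
\]
Because $1-a > 1/2$, Lemma \ref{lem:subexcursion} gives $Z_{1-a}<+\infty$ a.s.; in particular $\Phi_e(|f|)<+\infty$ a.s.

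Finally, on the full-measure event where $\Phi_e(|f|)<+\infty$, the conditional Kolmogorov strong law of large numbers applied to the i.i.d.\ sequence $(X_k)_{k\geq 1}$ (under the regular conditional distribution given $e$) yields $(n+1)^{-1}\sum_{k=1}^{n+1} X_k \to \Phi_e(f)$ a.s. Combining this with the convergence of the prefactor to $\sqrt{2\alpha}$ gives the stated a.s.\ convergence
\[
A_{4,n}(f) \xrightarrow[n\to\infty]{\text{a.s.}} \sqrt{2\alpha}\,\Phi_e(f),
\]
and since the negligible null set can be chosen independent of $f$ for each fixed $f$, the proof is complete.
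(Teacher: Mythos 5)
Your proof is correct and follows essentially the same route as the paper: bound the conditional expectation $\E[\int_0^{e(U)}dr\,f(\sigma_{r,U})\mid e]$ by $\|x^af\|_\infty Z_{1-a}$, invoke Lemma \ref{lem:subexcursion} (with $1-a>1/2$) for a.s.\ finiteness, and apply the strong law of large numbers conditionally on $e$. Your version merely spells out the prefactor computation $4\sqrt{\alpha n}\,(n+1)/(2n+1)^{3/2}\to\sqrt{2\alpha}$, which the paper leaves implicit.
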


\begin{proof}
Let $f\in\mathcal{B}([0,1])$ such that $f\geq 0$ and $\lVert
x^{a}f\rVert_{\infty}<+\infty$ for some $a \in [0, 1/2)$. Let $U$ be
uniform on $[0,1]$ and independent of $e$. Recall $Z_\beta=\int_0^1 ds \int_0^1 dr
\, \sigma_{r,s}^{\beta-1}$ defined in
\reff{eq:zb}. 
Notice that:
\[
\E\Big[\int_{0}^{e(U)}dr\, f(\sigma_{r,U})\,\Big|\, e\Big]
\leq  \norm{x^a f}_\infty Z_{1-a}.
\]
Since $1-a>1/2$, we deduce from Lemma \ref{lem:subexcursion} that a.s.
$Z_{1-a}<+\infty $.
Then, use the strong law of large numbers (conditionally on $e$) to
deduce that $A_{4,n}(f)$ converges a.s. towards  $\sqrt{2\alpha} \, \,
\Phi_e(f)$ as $n$ goes to infinity. 
\end{proof}

\section{Proof of Theorem \ref{theo:principal}}
\label{sec:proof-main}

Let $a>-1/2$. According to Lemmas
\ref{lem:A1}, \ref{lem:A2}, \ref{lem:A3} and \ref{lem:A4} (use (i) for
$a>0$ and (ii) for $a\in (-1/2,0]$), there exists
$\varepsilon>0$ and a finite constant $c$  such that for all $n\in
\N^*$, we have $\E[|A_n(x^a) - A_{4,n}(x^a)|]\leq  c
n^{-\varepsilon}$. Since according to Lemma \ref{lem:LGN},  we have
a.s. that $\lim_{n\rightarrow+\infty } A_{4,n}(x^a) =
\sqrt{2\alpha} \, \, \Phi_e(x^a)$, we deduce from Borel-Cantelli lemma
that,  with $\varphi(n)=\lceil
n^{2/\varepsilon} \rceil$, 
we have  a.s. $\lim_{n\rightarrow+\infty } A_{\varphi(n)}(x^a) =
\sqrt{2\alpha} \, \, \Phi_e(x^a)$. 

For $n'\geq n\geq 1$, we have $\ct_{[n]}\subset
\ct_{[n']}$. Unfortunately, by the construction of $\tn$, we don't have
in general that $v\in \tn$ implies that $v\in {\rm T}_{n'}$. However, it
is still true, as $1+a>0$, that:
\begin{equation}
   \label{eq:ineq-tn}
\sum_{v\in {\rm T}_{n}}  |{\rm T}_{n, v} |^{1+a}
\leq 
\sum_{v'\in {\rm T}_{n'}}  |{\rm T}_{n', v'} |^{1+a}.
\end{equation}

Let $n\in \N^*$. There exists a unique $n'\in \N^*$ such that
$\varphi(n')\leq n<\varphi(n'+1)$. We obtain from \reff{eq:ineq-tn} that:
\[
\left(\frac{2 \varphi (n') +1}{2 \varphi (n'+1) +1}\right)^{a  +\frac{3}{2}} 
  A_{\varphi(n')}(x^a) \leq  A_n(x^a) \leq  
\left(\frac{2 \varphi (n'+1)+1 }{2 \varphi (n') +1}\right)^{a  +\frac{3}{2}} 
A_{\varphi(n'+1)}(x^a) .
\]
As $\lim_{n'\rightarrow+\infty } \varphi(n')/\varphi(n'+1)=1$, we deduce
that a.s. $\lim_{n\rightarrow+\infty } A_n(x^a)= \sqrt{2\alpha} \, \,
\Phi_e(x^a)$. \\

In particular, for all $a\in (-1/2,0]$,  a.s. for all $k\in \N$, we have
$\lim_{n\rightarrow+\infty   }   A_n(x^{a+k})=  \sqrt{2\alpha}   \,   \,
\Phi_e(x^{a+k})$.
Since  on  $[0,1]$,   the  convergence  of  moments   implies  the  weak
convergence  of  measure,  we  deduce  that  a.s.   the  random  measure
$A_n(x^a\,      \bullet)      $       converges      weakly      towards
$\sqrt{2\alpha} \, \,  \Phi_e(x^{a}\, \bullet)$. By taking  a dense subset
of $a$ in $(-1/2, 0]$ and using monotonicity, we deduce  that a.s. for
all $a\in (-1/2, 0]$ the 
random   measure  $A_n(x^a\,   \bullet)  $   converges  weakly   towards
$\sqrt{2\alpha} \, \, \Phi_e(x^{a}\, \bullet)$. This ends the proof of
Theorem \ref{theo:principal}.

\section{Proof of Proposition \ref{prop:fluctuation}}
\label{sec:proof-fluc}

\subsection{A preliminary stable convergence}

Let $(E_v, v\in \cu)$ be independent exponential random variables with
mean 1 and independent of $e$. Let $f\in \cc([0,1])$. We set for $v\in \tn$:
\begin{equation}
   \label{eq:def-Zn}
X_{n,v}=  |\tn|^{-5/4} |\tnv| f \left(\frac{|\tnv|}{|\tn|}\right)
\quad\text{and}\quad
Z_n(f) =\sum_{v\in \tn} X_{n,v}\,  (E_v -1).
\end{equation}
We have the following lemma. 
\begin{lem}
   \label{lem:cv-ZA}
Let $f\in \cc([0,1])$ be locally Lipschitz  continuous on $(0,1]$ such
that $\norm{x^a f'}_{\text{esssup}}$ is finite for some $a\in (0,1)$.  We have
the following stable convergence: 
\begin{equation}
   \label{eq:cv-ZA}
(Z_n(f), A_n)
\,\xrightarrow[n\rightarrow+\infty]{(d) } \,
\left((2 \alpha)^{1/4 }\sqrt{ \Phi_e(xf^2)} \,\, G, \sqrt{2 \alpha}\, \, \Phi_e\right),
\end{equation}
where $G$ is a standard   Gaussian random variable independent of $e$. 
\end{lem}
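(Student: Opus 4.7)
The plan is to condition on the $\sigma$-algebra $\cf=\sigma(e,(U_k)_{k\geq 1})$, which determines the discrete tree $\tn$ and the random measure $A_n$, and to apply the Lindeberg--Feller CLT in the conditional probability. Since by hypothesis the $E_v$ are independent of $e$ (and of the $U_k$'s used to define $\tn$), the family $(X_{n,v}(E_v-1))_{v\in \tn}$ forms, conditionally on $\cf$, a triangular array of independent centered random variables with conditional variance
\[
\sum_{v\in \tn} X_{n,v}^2 = |\tn|^{-5/2}\sum_{v\in \tn}|\tnv|^2\, f(|\tnv|/|\tn|)^2 = A_n(xf^2).
\]

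The assumption $\norm{x^a f'}_{\text{esssup}}<\infty$ with $a\in(0,1)$, combined with local Lipschitz continuity on $(0,1]$, implies (by integrating the bound $|f'(y)|\leq \norm{x^a f'}_{\text{esssup}}\, y^{-a}$) that $f$ extends continuously to $[0,1]$ and in particular is bounded. Consequently $xf^2\in \cc([0,1])$ satisfies the hypotheses of Theorem \ref{theo:principal} (with exponent $0\in[0,1/2)$), so
\[
A_n(xf^2)\xrightarrow[n\to\infty]{\text{a.s.}} \sqrt{2\alpha}\,\Phi_e(xf^2),
\]
a limit which is a.s.\ finite since $\Phi_e(xf^2)\leq \norm{f}_\infty^2\, Z_2<\infty$ by Lemma \ref{lem:subexcursion}. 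I would then verify the conditional Lindeberg condition through the uniform bound
\[
\max_{v\in \tn}|X_{n,v}|\leq |\tn|^{-1/4}\norm{f}_\infty \xrightarrow[n\to\infty]{} 0,
\]
together with the exchangeability estimate
\[
\sum_{v\in\tn}\E\!\left[X_{n,v}^2(E_v-1)^2\, \ind_{\{|X_{n,v}(E_v-1)|>\varepsilon\}}\,\big|\,\cf\right]
\leq A_n(xf^2)\cdot \E\!\left[(E-1)^2\ind_{\{|E-1|>\varepsilon/\max_v|X_{n,v}|\}}\right],
\]
where $E$ is a generic exponential with mean one; the right-hand side tends to zero a.s. The conditional Lindeberg--Feller CLT then yields that, conditionally on $\cf$, $Z_n(f)$ converges in distribution to a centered Gaussian with variance $\sqrt{2\alpha}\,\Phi_e(xf^2)$, which can be represented as $(2\alpha)^{1/4}\sqrt{\Phi_e(xf^2)}\,G$ with $G$ standard Gaussian independent of $\cf$. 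Since $A_n$ is $\cf$-measurable and converges a.s.\ to $\sqrt{2\alpha}\,\Phi_e$ by Theorem \ref{theo:principal}, the joint (stable) convergence \reff{eq:cv-ZA} follows: for bounded continuous test functions $g_1,g_2$ one conditions on $\cf$ and applies dominated convergence in the outer expectation.

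The main technical point is the Lindeberg step, which crucially exploits the boundedness of $f$ derived from the regularity hypothesis on $f'$; absent such regularity, $\max_v|X_{n,v}|$ need not vanish and the CLT could fail. Once this is in hand, the rest is essentially bookkeeping: the conditional variance is identified with a quantity covered by Theorem \ref{theo:principal}, and the passage from conditional CLT plus a.s.\ convergence of the conditioning sequence to joint stable convergence is routine.
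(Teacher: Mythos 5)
Your proof is correct, but it takes a different route from the paper. The paper also conditions on the tree and exploits the independence of the $(E_v)$, but instead of invoking a conditional Lindeberg--Feller CLT it computes the conditional Laplace transform explicitly, $\E[\expp{-\lambda Z_n(f)}\,|\,\tn]=\expp{\sum_{v\in \tn}(\lambda X_{n,v}-\log(1+\lambda X_{n,v}))}$, and controls it via $\frac{x^2}{2}-\frac{x^3}{3}\leq x-\log(1+x)\leq \frac{x^2}{2}$ together with the a.s.\ convergences $A_n(xf^2)\to\sqrt{2\alpha}\,\Phi_e(xf^2)$ and $|\tn|^{-1/4}A_n(x^2f^3)\to 0$ from Theorem \ref{theo:principal}. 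That computation only makes sense for $\lambda X_{n,v}\geq 0$, so the paper must first treat $f\geq 0$ and then recover signed $f$ by writing $f=f_+-f_-$ and using $f_+f_-=0$ to recombine the two Gaussian limits; it also needs a truncation on the event $B_K=\bigcap_n\{A_n(xf^2)\leq K\}$ (plus Lemma \ref{lem:cv-LP}) because the limiting conditional Laplace transform is unbounded. Your Lindeberg argument handles signed $f$ in one pass and, since you test against bounded continuous functions, avoids the truncation entirely; the price is that you must check the Lindeberg condition, which you do correctly via $\max_{v\in\tn}|X_{n,v}|\leq |\tn|^{-1/4}\norm{f}_\infty\to 0$. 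Two cosmetic remarks: boundedness of $f$ is already immediate from the standing assumption $f\in\cc([0,1])$, so you need not derive it from the hypothesis on $f'$ (which, as in the paper's own proof of this lemma, plays no role here and is only needed elsewhere in the proof of Proposition \ref{prop:fluctuation}); and when you apply the conditional CLT you should note that the Lindeberg condition and the variance convergence hold for a.e.\ realization of $\cf$, so the classical (deterministic-array) theorem applies $\omega$-by-$\omega$ --- this is the routine step you allude to, and it is indeed sound.
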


\begin{proof}
  Let $f\in \cc([0,1])$.  We first  assume that $f$ is non-negative.  We
  compute the Laplace transform of $Z_n(f)$ conditionally on $\tn$.  Let
  $\lambda>0$. Elementary computations give:
\[
   \E\left[\expp{-\lambda Z_n(f)}|\tn\right]
=\expp{\lambda \sum_{v\in \tn} X_{n,v}} \E\left[\expp{-\lambda
  \sum_{v\in \tn} X_{n,v} \, E_v} |\tn\right] 
=\expp{ \sum_{v\in \tn} (\lambda X_{n,v} -
   \log(1+\lambda X_{n,v}))}.
\]
For $x\geq 0$, we have $\frac{x^2}{2} - \frac{x^3}{3} \leq  x - \log(1+x) \leq
\frac{x^2}{2}$. Thanks to Theorem \ref{theo:principal}, we have:
\[
\sum_{v\in \tn} X_{n,v}^2=A_n(x f^2) 
\,\xrightarrow[n\rightarrow+\infty]{\text{a.s.}}\,
\sqrt{2\alpha}\,  \Phi_e(xf^2) 
\]
and
\[
\sum_{v\in \tn} X_{n,v}^3=|\tn|^{-1/4} A_n(x^2 f^3) 
\,\xrightarrow[n\rightarrow+\infty]{\text{a.s.}}\,
0.
\]
We                   deduce                  that                   a.s.
$\lim_{n\rightarrow+\infty } \E\left[\expp{-\lambda Z_n(f)}|\tn\right] =
\exp{(      \lambda^2      \sqrt{2\alpha}\,     \Phi_e(xf^2)      /2)}$.
Let         $K>0$,         and        consider         the         event
$  B_K=\bigcap  _{n\in  \N}\{A_n(xf^2)\leq K\}$.  Since on $B_{K}$,
the  term $\E\left[\expp{-\lambda  Z_n(f)}|\tn\right]  $  is bounded  by
$\exp(\lambda^2 K/2)$, we  deduce from dominated convergence
that  for any  continuous  bounded function  $g$ on  the  set of  finite
measure on $[0,1]$ (endowed with  the topology of the weak convergence),
we have:
\begin{align*}
\lim_{n\rightarrow+\infty } 
\E\left[\expp{-\lambda Z_n(f)} g(A_n)\ind_{B_{K}}\right]
&=\lim_{n\rightarrow+\infty } 
\E\left[\E\left[\expp{-\lambda Z_n(f)}|\tn \right]
  g(A_n)\ind_{B_{K}}\right] \\
&= \E\left[\expp{   \lambda^2 \sqrt{2\alpha}\,
  \Phi_e(xf^2) /2}  g(\sqrt{2\alpha}\, \Phi_e)
\ind_{B_{K}}\right] \\
&= \E\left[\expp{  -\lambda (2 \alpha )^{1/4}\sqrt{ \Phi_e(xf^2)}
  \,\, G}  g(\sqrt{2\alpha}\,\Phi_e) 
\ind_{B_{K}}\right] ,
\end{align*}
where $G$ is a standard Gaussian random variable independent of $e$. 
We deduce that the convergence  in distribution \reff{eq:cv-ZA} holds
conditionally on $B_K$. 
Since  $A_n(xf^2)$  is
finite for every  $n$ and converges a.s. to  a finite limit, we  get that for
any  $\varepsilon>0$,  there  exists $K_\varepsilon$  finite  such  that
$\P(B_{K_\varepsilon})\geq 1-\varepsilon$. Then use Lemma
\ref{lem:cv-LP} below to conclude that \reff{eq:cv-ZA} holds for $f$ 
non-negative. 

In the general  case, we set $f_+=\max(0, f) $  and $f_-=\max(0, -f)$ so
that  $f=f_+-f_-$. Notice  that  $f_+$ and  $f_-$  are non-negative  and
continuous. We have proved that  \reff{eq:cv-ZA} holds with $f$ replaced
by  $\lambda_+  f_+ +  \lambda_-  f_-$  for  any $\lambda_+\geq  0$  and
$\lambda_-\geq  0$.  Since  $f_+  f_-=0$,  this  implies  the  following
convergence in distribution:
\[
(Z_n(f_+), Z_n(f_-), A_n)
\,\xrightarrow[n\rightarrow+\infty]{(d) } \,
\Big( (2 \alpha )^{1/4} \sqrt{ \Phi_e(xf_+^2)} \,\, G_+,  (2 \alpha
)^{1/4}\sqrt{ \Phi_e(xf_-^2)} \,\, G_-, \sqrt{2 \alpha}\, \, 
\Phi_e\Big),
\]
where $G_+$ and $G_-$ are independent standard  Gaussian random variables
independent of $e$.  Then, using again that $f_+ f_-=0$, we obtain that,
conditionally on $e$, $ \sqrt{ \Phi_e(xf_+^2)} \,\, G_+ - \sqrt{
  \Phi_e(xf_-^2)} \,\, G_-$ is distributed as $ \sqrt{ \Phi_e(xf^2)}
\,\, G$, where $G$ is a standard Gaussian random variable
independent of $e$. We deduce that  \reff{eq:cv-ZA} holds. This ends the proof.
\end{proof}

\begin{lem}
   \label{lem:cv-LP}
   Let  $(\Gamma_\varepsilon, \varepsilon>0)$  be a  sequence of  events
   such                                                             that
   $\lim_{\varepsilon  \rightarrow  0}  \P(\Gamma_\varepsilon)=1$.   Let
   $(W_n,  n\in \N)$  and $W$  be random  variables taking  values in  a
   Polish   space   $\cm$.   Assume  that   for   all   $\varepsilon>0$,
   conditionally on $\Gamma_\varepsilon$, the  sequence $(W_n, n\in \N)$
   converges  in  distribution  towards   $W$.  Then  $(W_n,  n\in  \N)$
   converges in distribution towards $W$.
\end{lem}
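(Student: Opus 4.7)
The plan is to verify weak convergence of $W_n$ to $W$ by checking, via the Portmanteau theorem, that $\E[g(W_n)] \to \E[g(W)]$ for every bounded continuous function $g \colon \cm \to \R$. Since $\cm$ is Polish, this characterisation suffices. The key device is the straightforward decomposition
\[
\E[g(W_n)] = \P(\Gamma_\varepsilon)\, \E\bigl[g(W_n)\,\bigl|\,\Gamma_\varepsilon\bigr] + \E\bigl[g(W_n)\,\ind_{\Gamma_\varepsilon^c}\bigr],
\]
where $\varepsilon>0$ will be sent to $0$ after $n$ is sent to infinity.

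Fix a bounded continuous $g$ and set $M = \norm{g}_\infty$. By hypothesis, the conditional law of $W_n$ given $\Gamma_\varepsilon$ converges weakly to the law of $W$, so applying this to the bounded continuous test function $g$ yields $\E[g(W_n)\mid\Gamma_\varepsilon] \to \E[g(W)]$ as $n \to \infty$. The residual term is dominated uniformly in $n$ by
\[
\bigl|\E[g(W_n)\,\ind_{\Gamma_\varepsilon^c}]\bigr| \leq M\, \P(\Gamma_\varepsilon^c).
\]
Passing to $\limsup_n$ and $\liminf_n$ in the decomposition gives
\[
\bigl|\limsup_n \E[g(W_n)] - \P(\Gamma_\varepsilon)\,\E[g(W)]\bigr| \leq M\,\P(\Gamma_\varepsilon^c),
\]
and the analogous inequality for $\liminf_n$. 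Letting $\varepsilon \to 0$ and using $\P(\Gamma_\varepsilon) \to 1$ (hence $\P(\Gamma_\varepsilon^c) \to 0$), both $\limsup_n \E[g(W_n)]$ and $\liminf_n \E[g(W_n)]$ coincide with $\E[g(W)]$, which is the desired convergence.

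There is no substantive obstacle here: the lemma is essentially a bookkeeping statement about exchanging the order of two limits ($n\to\infty$ and $\varepsilon\to 0$), and the uniform bound $M\,\P(\Gamma_\varepsilon^c)$ makes the interchange immediate. The only point worth flagging is the interpretation of the hypothesis, namely that ``conditional convergence in distribution'' means weak convergence of the conditional laws $\cl(W_n \mid \Gamma_\varepsilon)$ to $\cl(W)$; once this is read as $\E[g(W_n)\mid\Gamma_\varepsilon] \to \E[g(W)]$ for each bounded continuous $g$, the proof reduces to the two-line estimate above.
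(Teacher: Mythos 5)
Your proof is correct and follows essentially the same route as the paper's: decompose $\E[g(W_n)]$ over $\Gamma_\varepsilon$ and its complement, bound the residual term uniformly in $n$ by $\norm{g}_\infty\,\P(\Gamma_\varepsilon^c)$, and send $n\to\infty$ before $\varepsilon\to 0$. The only (immaterial) difference is the reading of the hypothesis: the paper takes $\E[g(W_n)\mid\Gamma_\varepsilon]\to\E[g(W)\mid\Gamma_\varepsilon]$ rather than $\to\E[g(W)]$, and either interpretation yields the same estimate and conclusion.
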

\begin{proof}
  Let  $g$  be a  real-valued  bounded  continuous function  defined  on
  $\cm$.       It       is        enough       to       prove       that
  $\lim_{n\rightarrow+\infty   }   |\E[g(W_n)]    -   \E[g(W)]|=0$.   By
  hypothesis, we have that for all $\varepsilon>0$:
\[
\lim_{n\rightarrow+\infty } \E[g(W_n)|\Gamma_\varepsilon]
=\E[g(W)|\Gamma_\varepsilon].
\]
We get:
\[
   |\E[g(W_n)] - \E[g(W)]|
\leq   |\E[g(W_n)|\Gamma_\varepsilon] - \E[g(W)|\Gamma_\varepsilon]|
  \P(\Gamma_\varepsilon)
 + 2 \norm{g}_\infty \P(\Gamma_\varepsilon^c)
\]
We deduce that $\limsup_{n\rightarrow+\infty }  |\E[g(W_n)] - \E[g(W)]|
\leq  2 \norm{g}_\infty \P(\Gamma_\varepsilon^c)$. Since
$\lim_{\varepsilon    \rightarrow   0}    \P(\Gamma_\varepsilon^c)=0$,
we deduce that $\lim_{n\rightarrow+\infty }  |\E[g(W_n)] -
\E[g(W)]|=0$. This ends the proof. 
\end{proof}

\subsection{Proof of Proposition \ref{prop:fluctuation}}
We deduce   Proposition
\ref{prop:fluctuation} directly from  Lemmas \ref{lem:cvD} and \ref{lem:cvD2}
below. 

Using notations from Section
\ref{sec:premlim-lem}, we set:
\[
\Delta_{n}=\inv{2\sqrt{\alpha} } |\tn|^{1/4}
(A_{1,n}-A_{2,n}).
\]

\begin{lem}
   \label{lem:cvD}
   Let $f\in \cc([0,1])$ be locally Lipschitz continuous on $(0,1]$ such
   that $\norm{x^a f'}_{\text{esssup}} $ is finite for some $a\in (0,1)$. We have
   the following convergence in probability:
\[
|\tn|^{1/4} (A_n- \sqrt{2 \alpha}\, \Phi_e)(f) -2\sqrt{\alpha} \,  \Delta_n(f)
\,\xrightarrow[n\rightarrow+\infty]{\P}\, 
0.
\]
\end{lem}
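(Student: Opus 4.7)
The plan is to telescope the difference $A_n - \sqrt{2\alpha}\,\Phi_e$ along the chain of intermediate random measures from Section~\ref{sec:premlim-lem}:
\[
(A_n - A_{1,n}) + (A_{1,n} - A_{2,n}) + (A_{2,n} - A_{3,n}) + (A_{3,n} - A_{4,n}) + (A_{4,n} - \sqrt{2\alpha}\,\Phi_e).
\]
By the very definition of $\Delta_n$, the middle step contributes exactly $2\sqrt{\alpha}\,\Delta_n(f)$ after multiplication by $|\tn|^{1/4}$, so the statement reduces to showing that each of the four remaining terms, multiplied by $|\tn|^{1/4}$, tends to $0$ in probability.

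First I would record a preparatory remark: the hypothesis $\norm{x^a f'}_{\text{esssup}} < \infty$ for some $a \in (0,1)$ implies $\norm{f}_\infty < \infty$. Indeed, integrating the pointwise bound $|f'(x)| \leq \norm{x^a f'}_{\text{esssup}}\, x^{-a}$ gives $|f(y) - f(x)| \leq \frac{\norm{x^a f'}_{\text{esssup}}}{1-a}(y^{1-a} - x^{1-a})$ on $(0,1]$, and $f \in \cc([0,1])$. With this in hand, Lemma~\ref{lem:A1} (with $a=0$) gives $\E[|(A_n - A_{1,n})(f)|] = O(n^{-1})$; Lemma~\ref{lem:A3} (with $a=0$) gives $\E[|(A_{2,n} - A_{3,n})(f)|] = O(n^{-1/2})$; and Lemma~\ref{lem:A4}(i) applied with the given $a$ gives $\E[|(A_{3,n} - A_{4,n})(f)|] = O(n^{-1/2})$. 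Each estimate, multiplied by $|\tn|^{1/4} = O(n^{1/4})$, tends to $0$ in $L^1$, hence in probability.

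The main obstacle will be the last term $|\tn|^{1/4}(A_{4,n}(f) - \sqrt{2\alpha}\,\Phi_e(f))$, which is not covered by the preceding $L^1$-bounds and must be handled by a conditional second moment argument. Writing
\[
A_{4,n}(f) = \frac{4\sqrt{\alpha n}}{(2n+1)^{3/2}} \sum_{k=1}^{n+1} X_k, \qquad X_k = \int_0^{e(U_k)} f(\sigma_{r, U_k})\, dr,
\]
I would use that, conditionally on $e$, the variables $(X_k, k \geq 1)$ are i.i.d.\ with mean $\Phi_e(f)$ and conditional variance at most $\norm{f}_\infty^2 \int_0^1 e(s)^2\, ds$, a quantity which is a.s.\ finite. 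Since $\frac{4\sqrt{\alpha n}(n+1)}{(2n+1)^{3/2}} = \sqrt{2\alpha}(1 + O(1/n))$, one splits $A_{4,n}(f) - \sqrt{2\alpha}\,\Phi_e(f)$ into a deterministic bias of order $\Phi_e(f)/n$ and a centred fluctuation whose conditional variance is $O(1/n)$. A conditional Chebyshev inequality then yields $|\tn|^{1/4}(A_{4,n}(f) - \sqrt{2\alpha}\,\Phi_e(f)) = O_\P(n^{-1/4})$, which vanishes in probability. Summing the four contributions finishes the proof.
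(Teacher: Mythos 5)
Your proposal is correct and follows essentially the same route as the paper: the identical telescoping through $A_{1,n},\dots,A_{4,n}$ (with the middle increment being exactly $2\sqrt{\alpha}\,\Delta_n(f)$ after rescaling), Lemmas~\ref{lem:A1}, \ref{lem:A3} and \ref{lem:A4}(i) controlling three of the remaining increments in $L^1$, and a conditional second-moment/Chebyshev argument for the empirical-mean error in $|\tn|^{1/4}\bigl(A_{4,n}(f)-\sqrt{2\alpha}\,\Phi_e(f)\bigr)$ plus a separately treated $O(1/n)$ normalization bias. The only cosmetic difference is that the paper bounds the unconditional second moment of that error using the finiteness of $\E\bigl[\int_0^1 e(s)^2\,ds\bigr]$, whereas you condition on $e$; both work.
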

\begin{proof}
We keep notations from Section
\ref{sec:premlim-lem}. We have:
\[
\val{|\tn|^{1/4} (A_n- \sqrt{2 \alpha}\, \Phi_e)(f) - 2\sqrt{\alpha} \,
  \Delta_n(f)}\leq  
\Delta_{1,n}+\Delta_{3,n}+ \Delta_{4,n} + \Delta_{5,n},
\]
where 
\begin{align*}
&\Delta_{1,n}=|\tn|^{1/4} \lvert A_{n}(f)-A_{1,n}(f)\rvert, 
\quad
\Delta_{3,n}=|\tn|^{1/4} \lvert A_{2,n}(f)-A_{3,n}(f)\rvert, \\
&\Delta_{4,n}=|\tn|^{1/4} \lvert A_{3,n}(f)-A_{4,n}(f)\rvert, 
\quad
\Delta_{5,n}=|\tn|^{1/4} \lvert A_{4,n}(f)-\sqrt{2 \alpha}\,
\Phi_e(f)\rvert.   
\end{align*}
Using  Lemmas \ref{lem:A1}, \ref{lem:A3} and \ref{lem:A4} part (i), we deduce the following
convergence in probability:
\[
\Delta_{1,n}\,\xrightarrow[n\rightarrow+\infty]{\P} \,0,
\quad
\Delta_{3,n}\,\xrightarrow[n\rightarrow+\infty]{\P} \,0
\quad\text{and}\quad
\Delta_{4,n}\,\xrightarrow[n\rightarrow+\infty]{\P} \,0.
\]

We study the convergence of $\Delta_{5,n}$. We set:
\[
I_n=\inv{n+1}\sum_{k=1}^{n+1}
\int_0^{e(U_k)}  dr\,  f(\sigma_{r,U_k})- 
\int_0^1 ds \int_0^{e(s)}   dr\,  f(\sigma_{r,s}) .
\]
By conditioning with
respect to $e$, we deduce that: 
\begin{equation}
   \label{eq:In2}
\E[I_n^2]\leq  \inv{n+1} \E\left[\left(\int_0^{e(U_1)}  dr\,
  f(\sigma_{r,U_1})\right)^2 \right]
\leq  \frac{\norm{f}_\infty^2}{n+1}  \, \E\Big[\int_0^1 ds\,  e(s)^2\Big].      
\end{equation}
Using the definition of $A_{4,n}(f)$, we get $\Delta_{5,n}\leq 
\Delta_{6,n}+ \sqrt{2 \alpha} \, \Delta_{7,n}$ 
with 
\[
\Delta_{6,n}= |\tn|^{1/4} \left|1- \frac{|\tn|^{3/2} } {2(n+1)\sqrt{2n}}\right|
A_{4,n}(|f|)
\quad\text{and} \quad
\Delta_{7,n}= |\tn|^{1/4} |I_n| .
\] 
From the a.s. convergence of $A_{4,n}(|f|)$ towards a finite limit, see
Lemma \ref{lem:LGN}, we deduce that a.s. $\lim_{n\rightarrow+\infty } 
\Delta_{6,n}=0$. 
Since $\E\Big[\int_0^1 ds\,  e(s)^2\Big]$ is finite, see \cite{r:qeem},
we deduce from \reff{eq:In2}
that $\lim_{n\rightarrow+\infty } \E[\Delta_{7,n}^2]=0$. We obtain  that:
\[
\Delta_{5,n}\,\xrightarrow[n\rightarrow+\infty]{\P} \,0.
\]

Then, we collect all the convergences together to get the result.    
\end{proof}

Now, we study the convergence in distribution of $\Delta_n(f)$. 
\begin{lem}
   \label{lem:cvD2}
Let $f\in \cc([0,1])$ be locally Lipschitz  continuous on $(0,1]$ such
that $\norm{x^a f'}_{\text{esssup}} $ is finite for some $a\in (0,1)$. We have
the following convergence in distribution: 
\begin{equation}
   \label{eq:cv-AD}
(2\sqrt{\alpha}\, \Delta_n(f), A_n)
\,\xrightarrow[n\rightarrow+\infty]{\text{(d)}}\, 
\left((2 \alpha)^{1/4 }\sqrt{ \Phi_e(xf^2)} \,\, G, \sqrt{2 \alpha}\, \, \Phi_e\right),
\end{equation}
where $G$ is a standard Gaussian random variable independent of $e$. 
\end{lem}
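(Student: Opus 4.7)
The plan is to reduce Lemma \ref{lem:cvD2} to Lemma \ref{lem:cv-ZA} by coupling the branch lengths $(h_{n,v})$ with the exponentials $(E_v)$ appearing in the definition of $Z_n(f)$. By Lemma \ref{lem:hh}, on an enlarged probability space we may realize $(h_{n,v}, v\in\tn) = (L_n E_v/S_\tn, v\in\tn)$, where $(E_v, v\in\cu)$ are iid Exp$(1)$ independent of $\tn$, $L_n$ and $e$, and $S_\tn = \sum_{v\in\tn}E_v$. Since $A_n$ is measurable with respect to the shape $\tn$ alone, this coupling preserves the joint law of $(\Delta_n(f), A_n)$ while matching the $E_v$'s with those used in \reff{eq:def-Zn}. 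Setting $\lambda_n = \E[h_{n,\emptyset}] = \E[L_n]/|\tn|$, $c_n = 2\sqrt{\alpha n}\,\lambda_n$ and $\rho_n = L_n|\tn|/(\E[L_n]\,S_\tn)$, I would then expand algebraically, with $X_{n,v}$ as in \reff{eq:def-Zn}:
\[
2\sqrt{\alpha}\,\Delta_n(f) = |\tn|^{1/4}(A_{1,n}-A_{2,n})(f) = c_n\sum_{v\in\tn}X_{n,v}(1 - \rho_n E_v) = -c_n Z_n(f) + c_n(1-\rho_n) R_n(f),
\]
with remainder factor $R_n(f) = \sum_{v\in\tn} X_{n,v}E_v = |\tn|^{1/4}A_n(f) + Z_n(f)$. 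The a.s.\ convergence $L_n/\sqrt{n}\to 1/\sqrt{\alpha}$ recalled in Section \ref{sec:premlim-lem} gives $c_n\to 1$.

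The next step is to show $c_n(1-\rho_n)R_n(f)\to 0$ in probability. A direct calculation from \reff{eq:dens-h} identifies $L_n^2 \stackrel{d}{=} Y/\alpha$ with $Y\sim\Gamma(n+1,1)$, so $\mathrm{Var}(L_n)=O(1)$ and hence $L_n/\E[L_n]-1=O_P(n^{-1/2})$. The classical CLT applied to the sum $S_\tn$ of $|\tn|=2n+1$ iid Exp$(1)$ variables (independent of $\tn$) gives $S_\tn/|\tn|-1 = O_P(n^{-1/2})$, and multiplying these two expansions yields $1-\rho_n=O_P(n^{-1/2})$. By Theorem \ref{theo:principal}, $A_n(f) = O_P(1)$, and Lemma \ref{lem:cv-ZA} already provides tightness of $Z_n(f)$; therefore $R_n(f)=O_P(n^{1/4})$, and the remainder is $O_P(n^{-1/4})\to 0$.

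Combining Slutsky's lemma with Lemma \ref{lem:cv-ZA} yields the joint convergence
\[
(2\sqrt{\alpha}\,\Delta_n(f), A_n) \xrightarrow[n\to\infty]{(d)} \left(-(2\alpha)^{1/4}\sqrt{\Phi_e(xf^2)}\,G, \sqrt{2\alpha}\,\Phi_e\right),
\]
and the conditional symmetry $-G\stackrel{d}{=}G$ given $e$ (hence given $\Phi_e$) delivers the stated limit \reff{eq:cv-AD}. The main obstacle is purely technical: verifying the rate $1-\rho_n=O_P(n^{-1/2})$ requires the short second-moment computation for $L_n$ from \reff{eq:dens-h} recalled above, which goes slightly beyond the a.s.\ asymptotics used earlier in Section \ref{sec:premlim-lem}; once this is in place, the rest reduces to clean algebraic manipulation and Slutsky's lemma.
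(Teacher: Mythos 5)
Your proposal is correct and follows essentially the same route as the paper: both use the distributional identity of Lemma \ref{lem:hh} to replace the branch lengths by $L_n E_v/S_{\tn}$, isolate the martingale-type term $-Z_n(f)$, show the remaining terms vanish in probability using the variance bound on $L_n$ and the law of large numbers for $S_{\tn}$, and conclude via Lemma \ref{lem:cv-ZA} together with the symmetry of $G$. The only differences are cosmetic (your factorization through $\rho_n$ and $R_n(f)$ versus the paper's $\kappa_{1,n},\kappa_{2,n}$ decomposition, and $O_P$ rates in place of the explicit Gamma-moment computations, the needed bound $\Var(L_n)\le 1/\alpha$ being already available in \reff{eq:moments_Ln2}).
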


\begin{proof}
According
to Lemma \ref{lem:hh}, we get that $(\Delta_n(f), A_n)$ is distributed as
$( \Delta_n'(f), A_n)$ where:
\[
\Delta'_n(f)= |\tn|^{-5/4}\sum_{v\in
  \tn }|\tnv|f\left(\frac{| \tnv |}{|\tn|}\right) \, Y'_{n,v},
\quad\text{with}\quad
Y'_{n,v}=\sqrt{n} \left(\E\left[\frac{L'_n E_v}{S_{\tn}}\right]
-\frac{L'_n E_v}{S_{\tn}} \right),
\]
and $S_{\bt}=\sum_{v\in  \bt}E_v$ for $\bt\in  \T$, with $L'_n$  a random
variable distributed as $L_n$, and thus with  density  given by
\reff{eq:dens-Ln},  independent
of $\tn$ and $(E_u, u\in  \cu)$ independent exponential random variables
with  mean 1,  independent of  $L'_n$ and  $\tn$.  So it is enough to
prove \reff{eq:cv-AD} with $\Delta_n$ replaced by $\Delta'_n$.

Recall  the definition
\reff{eq:def-Zn} of $Z_n(f)$. Since  $L'_n$ is  independent of $(E_u,
u\in \cu)$ and $\tn$, we get:
\[
\Delta'_n(f)=
\frac{\sqrt{n}}{\sqrt{|\tn|}} (\kappa_{1,n}+\kappa_{2,n})  A_n(f)
-\sqrt{n} \, 
\frac{L'_n}{S_{\tn}} Z_n (f) 
\]
with
\[
\kappa_{1,n}=|\tn|^{3/4}\,  (\E[L'_n] -L'_n)
\E\left[\frac{E_\emptyset}{S_{\tn}}\right]
\quad\text{and}\quad
\kappa_{2,n}= |\tn|^{3/4}\,  L'_n\left(\E\left[\frac{ E_\emptyset}{S_{\tn}}\right]
-\inv{S_{\tn}} \right) .
\]
Thanks to Corollary \ref{cor:moments_exp_gam} with $\alpha=\gamma=1$ and $\beta=0$, we
have that:
\[
\E[E_\emptyset/S_{\tn}]={\Gamma(2n+1)}/{\Gamma(2n+2)}=1/|\tn|.
\] 
Using \reff{eq:moments_Ln2}, we get:
\[
\E[|\kappa_{1,n}|]\leq  |\tn|^{3/4} \sqrt{\Var(L'_n)}\,\, 
\frac{\Gamma(2n+1)}{\Gamma(2n+2)}
\leq  \inv{\sqrt{\alpha}}\,\, \inv{(2n+1)^{1/4}}\cdot
\]
 We deduce that $\lim_{n\rightarrow+\infty } \kappa_{1,n }=0$ in
 probability. 
Using  \reff{eq:moments_Ln1} and Corollary \ref{cor:moments_exp_gam} (three times), we get:
\begin{align*}
\E[\kappa_{2,n}^2]
&= |\tn|^{3/2} \frac{n+1}{\alpha} \left( 
\frac{\Gamma(2n+1)^2}{\Gamma(2n+2)^2} +
\frac{\Gamma(2n-1)}{\Gamma(2n+1)} - 2 \frac{\Gamma(2n+1)}{\Gamma(2n+2)}
\frac{\Gamma(2n)}{\Gamma(2n+1)}
\right)\\
&=\frac{ |\tn|^{3/2} \,\,(n+1)(2n+3)}{\alpha 2n (2n+1)^2 (2n-1)}\cdot
\end{align*}
 We deduce that $\lim_{n\rightarrow+\infty } \kappa_{2,n }=0$ in
 probability. 

We deduce from the law of large numbers that  $\lim_{n\rightarrow+\infty
} S_{\tn}/|\tn|=1$ in probability. 
According to \cite{ad:13}, we have that a.s. $\lim_{n\rightarrow+\infty
} L_n/\sqrt{n}= 1/\sqrt{\alpha}$. This implies the following convergence in
probability  $\lim_{n\rightarrow+\infty
} L'_n/\sqrt{n}= 1/\sqrt{\alpha}$. We obtain  that:
\[
\sqrt{n}\, 
\frac{L'_n}{S_{\tn}}
\,\xrightarrow[n\rightarrow+\infty]{\P } \,  \inv{2\sqrt{\alpha}} \cdot
\]

We deduce  that $( 2 \sqrt{\alpha} \,  \Delta'_n(f), A_n)$ has  the same
limit in distribution as $(- Z_n(f), A_n)$ as $n$ goes to infinity. Then
use Lemma \ref{lem:cv-ZA} 
to get that  \reff{eq:cv-AD} holds with $\Delta_n$ replaced by $\Delta'_n$. 
This ends the proof of the Lemma.
\end{proof}

\section{Proof of Corollary \ref{cor:cv-measure}}
\label{sec:proof-cv-measure}
Before stating the proof, we recall the definition of the contour
process of a discrete rooted ordered tree, see \cite{dlg:rtlpsbp}. 

\subsection{Contour process}
\label{sec:contour}
Let    $\bt\in     \T$    be    a finite   tree.     The    contour    process
$C^\bt=(C^\bt(s), s\in  [0, 2 |\bt|])$ is  defined as the distance  to the
root of a particle visiting continuously each edge of $\bt$ at speed one
(where all edges  are of length 1) according to  the lexicographic order
of the nodes. More precisely, we set $\emptyset=u(0)<u(1)< \ldots <
u(|\bt|-1)$  the nodes of $\bt$ ranked in the lexicographic
order. By convention, we set $u(|\bt|)=\emptyset$. 

We      set       $\ell_0=0$,      $\ell_{|\bt|+1}=2$       and      for
$k\in  \{1,   \ldots,  |\bt|\}$,   $\ell_k=d(u(k-1),  u(k))$.    We  set
$L_k=\sum_{i=0}^k  \ell_i$  for  $k\in   \{0,  \ldots,  |\bt|+1\}$,  and
$L'_k=L_k+d(u(k),           \mrca(u(k),          u(k+1)))$           for
$k\in \{0,  \ldots, |\bt|-1\}$. (Notice  that $L'_k=L_k$ if and  only if
$u(k)\prec    u(k+1)$.)     We    have   $L_{|\bt|}=2|\bt|-2    $    and
$L_{|\bt|+1}=2|\bt| $. We define for $k\in \{0, \ldots, |\bt|-1\}$:
\begin{itemize}
   \item  for $s\in
[L_k, L'_{k})$, the particle goes down from $u(k)$ to $\mrca(u(k),
u(k+1))$:  $C^\bt (s)=|u(k)|-(s-L_k)$;
\item for $s\in
[L'_{k}, L_{k+1})$, the particle goes up from  $\mrca(u(k),
u(k+1))$ to $u(k+1)$:
$C^\bt (s)=|\mrca(u(k),  u(k+1))|+(s-L'_k)$,
\end{itemize}
and $C^\bt(s)=0$ for  $s\in [2|\bt|-2, 2|\bt|]$. 
Notice that $C^\bt$ is continuous. 

For $u\in \bt$, we define 
$\ci_u$ the time interval during which the
particle explores the edge attached below $u$. More precisely for $k\in
\{1, \ldots, |\bt|-1\}$, we set:
\[
\ci_{u(k)}=[L_k-1, L_{k}) \bigcup [L''_k, L''_k+1),
\]
where  $L''_k=\inf\{s\geq  L_{k}, \, C^\bt (s)<|u(k)|\}$ and
$\ci_\emptyset=[2|\bt|-2, 2|\bt|]$. The sets $(\ci_u, u\in \bt)$ are
disjoints 2 by 2 with $\bigcup _{u\in \bt} \ci_u=[0, 2|\bt|]$. For $u\in \bt$, we have that the
Lebesgue measure of $\ci_u$ is 2 and
\begin{equation}
   \label{eq:IuCd}
C^\bt(s)\leq  d(\emptyset, u)\leq  C^\bt(s) +1 \quad\text{for all $s\in \ci_u$}.
 \end{equation}

\subsection{Elementary functionals of finite trees}
\label{sec:approx}
Let $\bt\in \T$ be a finite tree and $k\in \N^*$. For $\bu=(u_1, \ldots, u_k)\in \bt^k$, we
define $\mrca(\bu)=\mrca(\{u_1, \ldots, u_k\})$ the most recent common
ancestor of $u_1, \ldots, u_k$. 
We consider the following elementary functional of a tree, defined for $\bt\in \T$:
\begin{equation}
   \label{eq:def-D}
D_k(\bt)=\sum_{\bu\in \bt^k} d(\emptyset, \mrca(\bu)). 
\end{equation}
We have: 
\begin{equation}
   \label{eq:bt=D}
\sum_{v\in \bt} |\bt_v|^k =D_k(\bt)+ |\bt|^k,
\end{equation}
which  we obtain  from the following equalities
\[
\sum_{v\in \bt} |\bt_v|^k
=\sum_{v\in \bt}\,\, \sum_{\bu \in  \bt^k} \ind_{\{v\preccurlyeq  \mrca(\bu)\}}
= \sum_{\bu\in  \bt^k}\,\, \sum_{v\in \bt}\ind_{\{v\preccurlyeq  \mrca(\bu)\}}
=\sum_{\bu\in  \bt^k} (d(\emptyset, \mrca(\bu))+1). 
\]

For      $x=(x_1,     \ldots,      x_k)\in     \R^k$,      denote     by
$(x_{(1)},  \ldots, x_{(k)})$  its  order statistic  which is  uniquely
defined     by     $x_{(1)}\leq     \cdots     \leq     x_{(n)}$     and
$\sum_{i=1}^k  \delta   _{x_i}=\sum_{i=1}^k  \delta   _{x_{(i)}}$,  with
$\delta_z$  the  Dirac mass  at  $z$.  Recall notation  $m_h(s,t)$,  see
\reff{eq:def-m}, for  the minimum of  $h$ over the interval  with bounds
$s$ and $t$.  We set:
\begin{equation}
   \label{eq:def-DD}
\cd_k(\bt)=\int_{[0, |\bt|]^k}   m_{C^\bt}(2x_{(1)}, 2x_{(k)})\, dx ,
\end{equation}
with      the       conventions      that      if       $k=1$,      then
$\cd_1(\bt)=\int_{[0,  |\bt|]}   C^\bt(2x)\, dx$.

We have the following lemma. 

\begin{lem}
   \label{lem:d=D}
We have for $\bt\in \T$ and $k\in \N^*$: 
\begin{equation}
   \label{eq:DrD}
0 \leq  D_k(\bt) - \cd_k(\bt) \leq  |\bt|^k.
\end{equation}
\end{lem}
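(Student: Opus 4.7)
The plan is to reduce the bound to a pointwise inequality by partitioning the cube $[0,2|\bt|]^k$ along the intervals $\ci_u$. First, the change of variables $s_i=2x_i$ in \reff{eq:def-DD} yields
\[
\cd_k(\bt) = 2^{-k}\int_{[0,2|\bt|]^k} m_{C^\bt}(s_{(1)},s_{(k)})\,ds.
\]
Since $(\ci_u, u\in\bt)$ is a partition of $[0,2|\bt|]$ with each $\ci_u$ of Lebesgue measure $2$, the cube $[0,2|\bt|]^k$ is partitioned by the sets $\ci_{u_1}\times\cdots\times\ci_{u_k}$ indexed by $\bu=(u_1,\dots,u_k)\in\bt^k$, each of volume $2^k$. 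Since $D_k(\bt)=\sum_{\bu}d(\emptyset,\mrca(\bu))$, it therefore suffices to prove the pointwise two-sided bound
\[
d(\emptyset,\mrca(\bu))-1 \le m_{C^\bt}(s_{(1)},s_{(k)}) \le d(\emptyset,\mrca(\bu))
\]
for every $\bu\in\bt^k$ and every choice of $s_i\in\ci_{u_i}$; indeed, integrating this over $\ci_{u_1}\times\cdots\times\ci_{u_k}$ and summing over $\bu$ would give $2^k(D_k(\bt)-|\bt|^k)\le 2^k\cd_k(\bt)\le 2^k D_k(\bt)$, which after dividing by $2^k$ is \reff{eq:DrD}.

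Set $v=\mrca(\bu)$. For the lower bound, the key observation is that the contour process explores the nodes descending from $v$ during a single contiguous time interval $[T_1,T_2]=\bigcup_{w\in\bt,\,v\preccurlyeq w}\ci_w$. As each $u_i$ descends from (or equals) $v$, every $s_i$ lies in $[T_1,T_2]$, and hence $[s_{(1)},s_{(k)}]\subseteq [T_1,T_2]$. For any $t\in[T_1,T_2]$, if $w\in\bt$ denotes the node with $t\in\ci_w$, then $v\preccurlyeq w$ and \reff{eq:IuCd} gives $C^\bt(t)\ge d(\emptyset,w)-1\ge d(\emptyset,v)-1$; taking the infimum over $[s_{(1)},s_{(k)}]$ yields the lower bound.

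For the upper bound, I would distinguish two cases. If some $u_i$ equals $v$, then $s_i\in\ci_v\cap[s_{(1)},s_{(k)}]$ and \reff{eq:IuCd} directly gives $m_{C^\bt}(s_{(1)},s_{(k)})\le C^\bt(s_i)\le d(\emptyset,v)$. Otherwise every $u_i$ is a proper descendant of $v$, and since $v=\mrca(\bu)$, at least two of them, say $u_a$ and $u_b$, must descend from distinct children of $v$; after relabelling so that $u_a$ precedes $u_b$ in the lexicographic order, we have $s_a<s_b$. Between times $s_a$ and $s_b$ the contour must leave one child-subtree of $v$ and enter another, hence passes through $v$ itself at some time $t\in[s_a,s_b]\subseteq[s_{(1)},s_{(k)}]$ where $C^\bt(t)=d(\emptyset,v)$, giving the upper bound.

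The only non-trivial inputs are two geometric facts about the contour: (i) the preimage under $C^\bt$ of the subtree rooted at $v$ is a single contiguous interval, and (ii) any transition between two distinct child-subtrees of $v$ passes through $v$ itself. Both are straightforward consequences of the definition of $C^\bt$ recalled in Section \ref{sec:contour}, so the main step is really just the careful bookkeeping of these observations together with the case split needed for the upper bound.
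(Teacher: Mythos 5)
Your proof is correct and follows essentially the same route as the paper's: both reduce \reff{eq:DrD} to the pointwise two-sided bound $m_{C^\bt}(s_{(1)},s_{(k)})\le d(\emptyset,\mrca(\bu))\le m_{C^\bt}(s_{(1)},s_{(k)})+1$ on each cell $\prod_{i=1}^k\ci_{u_i}$, then integrate, sum over $\bu\in\bt^k$, and rescale. The only difference is that the paper simply asserts this pointwise inequality as a "generalization of \reff{eq:IuCd}", whereas you supply the contour-process justification (contiguity of the exploration interval of the subtree above $\mrca(\bu)$, and the passage through $\mrca(\bu)$ between distinct child-subtrees), which is a welcome level of detail.
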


\begin{proof}
For $\bu=(u_1, \ldots, u_k)\in \bt^k$, we have the following
generalization of  \reff{eq:IuCd}: 
 for $x=(x_1, \ldots, x_k)\in 
\prod_{i=1}^k \ci_{u_i}$, 
\[
m_{C^\bt}(x_{(1)}, x_{(k)}) \leq  d(\emptyset, \mrca(\bu)) \leq
m_{C^\bt}(x_{(1)}, x_{(k)}) +1. 
\]
(Notice that $m_{C^\bt}(x_{(1)}, x_{(k)}) =  d(\emptyset,
\mrca(\bu)) $ as soon as  $\mrca(\bu)\prec u_i$
for all $i\in \{1, \ldots, k\}$.) We deduce that:
\[
0\leq 2^k d(\emptyset,\mrca(\bu))-  \int _{\prod_{i=1}^k \ci_{u_i}} 
m_{C^\bt}(x_{(1)}, x_{(k)})  \, dx  \leq   2^k.
\]
By summing over $\bu\in \bt^k$, we get:
\[
0\leq  2^k D_k(\bt)- \int_{[0, 2|\bt|]^k}   m_{C^\bt}(x_{(1)},
x_{(k)})\, dx  \leq  2^k |\bt|^k
.
\]
Use the change of variable $2y=x$ to get \reff{eq:DrD}. 
\end{proof}

\subsection{Convergence of contour processes}
\label{sec:td}
We assume  that $\rp$ is  a probability  distribution on $\N$  such that
$1>\rp(1)+\rp(0)\geq \rp(0)>0$ and which is critical  (that is $\sum_{k\in \N} k \rp(k)=1$).
 We also assume
that  $\rp$  is in  the  domain  of  attraction  of a  symmetric  stable
distribution of Laplace exponent $\psi(\lambda)=\kappa \lambda ^\gamma$
with  $\gamma\in  (1,2]$ and $\kappa>0$,  and  renormalizing  sequence
$(a_p,  p\in  \N^*)$   of  positive  reals:  if   $(U_k,k\in  \N^*)$  are
independent  random variables  with the  same distribution  $ \rp$,  and
$W_p=\sum_{k=1}^p U_k  - p$, then $W_p/a_p$  converges in distributions,
as $p$  goes to  infinity, towards  a random  variable $X$  with Laplace
exponent $-\psi$  (that is  $\E[\expp{-\lambda X}]=\expp{\psi(\lambda)}$
for $\lambda\geq 0$). Notice this convergence implies that:
\begin{equation}
   \label{eq:cv-an}
\lim_{p\rightarrow+\infty } \frac{a_p}{p}=0.
\end{equation}
\begin{rem}
   \label{rem:s2-fini-0}
If $\rp$ has  finite variance, say $\sigma^2$, then one can take
$a_p=\sqrt{p}$ and $X$ is then a centered Gaussian random variable
with variance $\sigma^2$, so that $\psi(\lambda)= \sigma^2\lambda  ^2/2$. 
\end{rem}

The  main  theorem  in  Duquesne  \cite{d:ltcpcgwt}  on  the  functional
convergence in distribution of the  contour process stated when $\rp$ is
aperiodic, can easily  be extended to the case  $\rp$ periodic.  (Indeed
the  lack of  periodicity  hypothesis is  mainly used  in  Lemma 4.5  in
\cite{d:ltcpcgwt} which is based on Gnedenko local limit theorem.  Since
the latter  holds \textit{a fortiori}  for lattice distributions  in the
domain of  attraction of stable law,  it allows to extend  the result to
such periodic distribution,  as soon as one uses  sub-sequences on which
the conditional probabilities are well defined.) It will be stated in this
more general version, see Theorem \ref{theo:d} below.  Since the contour
process  is  continuous  as  well  as  its  limit,  the  convergence  in
distribution  holds  on  the   space  $\cc([0,1])$  of  real  continuous
functions endowed with the supremum norm.

\begin{theo}
   \label{theo:d}
   Let $\rp$ be  a critical probability  distribution on $\N$,
   with  $1>\rp(1)+\rp(0)\geq \rp(0)>0$, which  belongs to  the  domain of  attraction of  a
   symmetric     stable     distribution     of     Laplace     exponent
   $\psi(\lambda)=  \kappa \lambda  ^\gamma$   with   $\gamma\in  (1,2]$
   and $\kappa>0$,    and 
   renormalizing sequence $(a_p,  p\in \N^*)$.  Let $\tau$ be  a GW tree
   with offspring distribution $\rp$, and $\tau^{(p)}$ be distributed as
   $\tau$  conditionally  on  $\{|\tau|=p\}$.   There  exists  a  random
   non-negative continuous process $H=(H_s,  s\in [0,1])$, such that the
   following convergence on the space $\cc([0,1])$  holds in distribution:
\[
\frac{a_p}{p}\left(C^{\tau^{(p)}}(2ps), s\in [0,1]\right) 
\,\xrightarrow[p\rightarrow+\infty]{(d) } \,
H,
\]
where the convergence is taken  along the
infinite  sub-sequence of  $p$  such that  $\P(|\tau|=p)>0$. 
\end{theo}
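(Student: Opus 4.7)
The plan is to reduce the statement to the main theorem of \cite{d:ltcpcgwt}, which establishes exactly this functional convergence in distribution under the additional hypothesis that the offspring law $\rp$ is aperiodic. Duquesne's proof proceeds by first establishing the functional convergence of the Lukasiewicz (random walk) path of $\tau^{(p)}$, properly rescaled, towards the normalized excursion of the stable L\'evy process with Laplace exponent $\psi$; then transferring this convergence to the height process (and hence to the contour process, which differs from the height process by an asymptotically negligible time change) via the classical Le Gall--Le Jan coding and the Duquesne--Le Gall moment estimates. Accepting this scheme as a black box, my task is to locate the unique place where aperiodicity is genuinely used and to patch it in the periodic setting.

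Inspection of \cite{d:ltcpcgwt} shows that aperiodicity enters only through Lemma~4.5, whose role is to produce sharp asymptotics of the form $\P(|\tau| = p) \sim c/(p\, a_p)$ as $p \to +\infty$, and more generally to control ratios $\P(|\tau|=p+k)/\P(|\tau|=p)$ for bounded $k$. The proof of that lemma invokes Gnedenko's local limit theorem in its non-lattice form applied to the random walk $(W_p)$ with step distribution $\rp$ shifted by $-1$, which is centered and in the domain of attraction of $X$ with Laplace exponent $\psi$. First I would restate Lemma~4.5 by replacing ``as $p \to +\infty $'' by ``as $p \to +\infty $ along the sub-sequence $\{p : \P(|\tau|=p)>0\}$'', and reprove it using Gnedenko's local limit theorem in its lattice version: if $d$ denotes the span of the support of the step distribution, then for $\ell \in d\,\Z$ one has the uniform estimate $\P(W_p = \ell) = (d/a_p)\, g_\psi(\ell/a_p) + o(1/a_p)$, where $g_\psi$ is the density of $X$. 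Applying this through the Otter--Dwass cycle lemma, $\P(|\tau|=p) = (1/p)\, \P(W_p = -1)$, gives the needed asymptotic, restricted to the admissible arithmetic progression.

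The next step will be to rerun the tightness and convergence arguments of \cite{d:ltcpcgwt} with this restricted index set. The arguments proceed through conditional estimates of the form $\E[\,\cdot\, | \,|\tau| = p\,]$ which are well defined precisely when $\P(|\tau|=p)>0$, and every a priori bound used there is of the shape ``$\leq C \cdot \P(W_p = -1-k)/\P(W_p = -1)$'' for bounded $k$. On the admissible sub-sequence, the lattice local limit theorem yields the same uniform control as in the aperiodic case, so the tightness estimates for the Lukasiewicz path, the height process and the contour process all go through unchanged. The identification of finite-dimensional limits likewise rests only on these local estimates and on the unconditional stable limit theorem for $W_p/a_p$, which holds without any aperiodicity assumption.

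The main obstacle is essentially bookkeeping: one must check carefully that every single invocation of a local limit estimate in \cite{d:ltcpcgwt} is compatible with restricting $p$ to the lattice of admissible sizes, and in particular that the constants appearing in the tightness bounds for the modulus of continuity of the contour process remain uniform along this sub-sequence. No genuinely new probabilistic idea is needed beyond the lattice form of Gnedenko's theorem; the content of the extension is simply that the conditioning event $\{|\tau|=p\}$ must be restricted to the natural periodicity class of $|\tau|$ in order to be non-trivial, and once this restriction is imposed every statement in Duquesne's proof has a direct analogue. The limiting process $H$ is the same normalized height excursion associated with $\psi$ as in the aperiodic case.
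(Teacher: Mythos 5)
Your proposal is correct and takes essentially the same route as the paper: the authors likewise reduce to the main theorem of Duquesne's work, observe that aperiodicity is used only in his Lemma~4.5 via Gnedenko's local limit theorem, and invoke the lattice version of that theorem along the sub-sequence of $p$ with $\P(|\tau|=p)>0$. Your write-up merely spells out in more detail the bookkeeping that the paper dispatches in a parenthetical remark.
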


The process $H$, see \cite{d:ltcpcgwt} for a construction of $H$, is 
the so called  normalized excursion for  the height
process,  introduced in \cite{lglj:bplp}, 
of a Lévy tree with branching mechanism $\psi$.
\begin{rem}
   \label{rem:s2-fini-1}
If $\psi(\lambda)= \alpha\lambda  ^2$, for some $\alpha>0$, then $H$ is
distributed as $\sqrt{2/\alpha}\, B$, where $B$ is the positive Brownian
excursion, see \cite{dlg:rtlpsbp}. 
\end{rem}

\subsection{Convergence of additive functionals}
We now give the main result of this Section. 
\begin{cor}
   \label{cor:cvDk}
   Under the hypothesis and notations of Theorem \ref{theo:d}, we  have the following
   convergences in distribution 
for all $k\in \N^*$:
\[
\lim_{p\rightarrow+\infty }
\frac{a_p}{ p^{k+1}} \sum_{v\in \tau^{(p)}} |\tau^{(p)}_v|^{k} 
\,\stackrel{(d)}{=}\,
\lim_{p\rightarrow+\infty }
\frac{a_p}{ p^{k+1}}\sum_{\bu\in (\tau^{(p)})^k} d(\emptyset, \mrca(\bu))
\,\stackrel{(d)}{=}\,
\int_0^1 ds \int_0^{H(s)} dr \, \sigma_{r,s}(H)^{k-1}, 
\]
where $\sigma_{r,s}(H)$  is the  length of the  excursion of  the height
process $H$  above $r$ straddling  $s$ defined in  \reff{eq:def-srs} and
where the  convergence is taken  along the infinite sub-sequence  of $p$
such that $\P(|\tau|=p)>0$.
\end{cor}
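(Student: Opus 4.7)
The strategy is to reduce both left-hand sums to the elementary continuous functional $\cd_k$ from \reff{eq:def-DD}, invoke Theorem \ref{theo:d} together with the continuous mapping theorem, and finally identify the limit with the stated integral. By the identity \reff{eq:bt=D}, $\sum_{v\in\tau^{(p)}}|\tau^{(p)}_v|^k = D_k(\tau^{(p)}) + p^k$, and by Lemma \ref{lem:d=D}, $|D_k(\tau^{(p)}) - \cd_k(\tau^{(p)})| \leq p^k$. After multiplication by $a_p/p^{k+1}$, both error terms are bounded by $a_p/p$, which tends to zero by \reff{eq:cv-an}. Consequently the first two limits coincide, and the problem reduces to proving the distributional convergence of $a_p\, p^{-k-1}\, \cd_k(\tau^{(p)})$.

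Next, I would perform the change of variables $x = py$ in \reff{eq:def-DD}. Setting $C_p(s) = (a_p/p)\, C^{\tau^{(p)}}(2ps)$ for $s \in [0,1]$, one obtains
\[
\frac{a_p}{p^{k+1}}\, \cd_k(\tau^{(p)}) = \int_{[0,1]^k} m_{C_p}(y_{(1)}, y_{(k)}) \, dy.
\]
By Theorem \ref{theo:d}, $C_p$ converges in distribution in $\cc([0,1])$ to the normalized excursion $H$ of the height process. Since the functional $C \mapsto \int_{[0,1]^k} m_C(y_{(1)}, y_{(k)}) \, dy$ is continuous for the uniform topology (use $|m_{C_n}(a,b) - m_C(a,b)| \leq \|C_n - C\|_\infty$ uniformly in $(a,b)$ and dominated convergence, uniform boundedness being ensured by the tightness of $\|C_p\|_\infty$), the continuous mapping theorem yields the convergence in distribution to $\int_{[0,1]^k} m_H(y_{(1)}, y_{(k)}) \, dy$.

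It then remains to identify this limit with $\int_0^1 ds \int_0^{H(s)} dr \, \sigma_{r,s}(H)^{k-1}$. By the layer cake formula and parameterizing $y = (s, t_1, \ldots, t_{k-1}) \in [0,1]^k$, we have
\[
\int_{[0,1]^k} m_H(y_{(1)}, y_{(k)})\, dy = \int_0^{\infty} dr \int_{[0,1]^k} \ind_{\{m_H(y_{(1)}, y_{(k)}) \geq r\}} \, dy.
\]
The key observation is the factorization
\[
\ind_{\{m_H(y_{(1)}, y_{(k)}) \geq r\}} = \prod_{i=1}^{k-1} \ind_{\{m_H(s, t_i) \geq r\}},
\]
which holds because $H \geq r$ on $[y_{(1)}, y_{(k)}]$ if and only if $H \geq r$ on each pairwise sub-interval $[\min(s, t_i), \max(s, t_i)]$ (the union of which is $[y_{(1)}, y_{(k)}]$). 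Integrating out the $t_i$'s via Fubini produces $\sigma_{r,s}(H)^{k-1}$, and the restriction $r \leq H(s)$ appears automatically since $\sigma_{r,s}(H) = 0$ when $r > H(s)$.

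The main technical point, rather than any deep obstacle, is the verification that $\Psi : C \mapsto \int_{[0,1]^k} m_C(y_{(1)}, y_{(k)})\, dy$ is continuous and that the convergence in distribution passes through this functional; both follow from dominated convergence and the tightness of $\|C_p\|_\infty$ implied by Theorem \ref{theo:d}. The conceptual content is really contained in the algebraic identity \reff{eq:bt=D} and the continuous approximation of Lemma \ref{lem:d=D}, which together reduce a discrete combinatorial sum to a continuous functional of the contour process amenable to Duquesne's invariance principle.
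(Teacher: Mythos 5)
Your argument is correct, and it follows the paper's reduction step for step up to the point where everything rests on the convergence of $a_p\,p^{-(k+1)}\cd_k(\tau^{(p)})$: the use of \reff{eq:bt=D} and Lemma \ref{lem:d=D} together with \reff{eq:cv-an} to dispose of the two error terms of order $a_p/p$ is exactly what the paper does. From there you diverge. The paper first collapses $\cd_k$ to a \emph{two}-dimensional integral, $2\cd_k(\bt)=k(k-1)\int_{[0,|\bt|]^2}|x_2-x_1|^{k-2}\,m_{C^\bt(2\bullet)}(x_1,x_2)\,dx_1dx_2$ (the identity \reff{eq:=}, obtained by integrating out the $k-2$ middle coordinates), applies the continuous mapping theorem to that functional, and then identifies the limit via the deterministic representation formula of Lemma \ref{lem:sigma-moment}. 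You instead keep the $k$-dimensional functional $C\mapsto\int_{[0,1]^k}m_C(y_{(1)},y_{(k)})\,dy$, pass to the limit directly (the functional is in fact $1$-Lipschitz for the sup norm, since $|m_{C'}(a,b)-m_C(a,b)|\le\norm{C'-C}_\infty$, so the appeals to tightness and dominated convergence are not even needed), and identify the limit by the layer-cake formula and the factorization $\ind_{\{m_H(y_{(1)},y_{(k)})\ge r\}}=\prod_i\ind_{\{m_H(s,t_i)\ge r\}}$, which is valid because the intervals $[\min(s,t_i),\max(s,t_i)]$ all contain $s$ and their union is $[y_{(1)},y_{(k)}]$. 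This last computation is precisely the first step of the paper's proof of Lemma \ref{lem:sigma-moment}, so for integer $k$ your route is self-contained and arguably more economical: it bypasses the Bernstein-approximation machinery of that lemma entirely. The paper's detour through the $|s'-s|^{k-2}$ representation buys the formula for arbitrary real exponents $\beta>1$, which it needs elsewhere (Lemmas \ref{lem:subexcursion} and \ref{lem:ZH}), but is not required for the corollary itself. The only point to state a little more carefully is $k=1$: there the "automatic restriction" to $r\le H(s)$ does not come from $\sigma_{r,s}(H)^{k-1}$ vanishing (an empty product is $1$) but from the layer-cake representation of $m_H(s,s)=H(s)$ itself; the conclusion is of course still correct.
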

We deduce from their proofs,  using the Skorohod representation theorem, that
all the  convergences in  distribution of Corollary  \ref{cor:cvDk} hold
simultaneously for all $k\in \N^*$.

\begin{proof}
  Recall   notation   $m_h(s,t)$    and   $\sigma_{r,s}(h)$   given   in
  \reff{eq:def-m} and \reff{eq:def-srs}.  We shall take limits along the
  infinite sub-sequence of $p$ such that $\P(|\tau|=p)>0$.

Recall definitions  \reff{eq:def-D} of  $D_k(\bt)$ and \reff{eq:def-DD}
of  $\cd_k(\bt)$. Thanks to Lemma \ref{lem:d=D} and  \reff{eq:cv-an} which
implies  that
$(p^{-(k+1)} a_p (D_k(\tau^{(p)}) - \cd_k(\tau^{(p)})),\, p\in \N^*)$
converges in 
probability towards 0 and to \reff{eq:bt=D}, we see the proof of the
corollary is complete as
soon as we obtain that for all $k\in \N^*$:
\begin{equation}
   \label{eq:cor-res}
\lim_{p\rightarrow+\infty }
\frac{a_p}{ p^{k+1}} \cd_k(\tau^{(p)})
\,\stackrel{(d)}{=}\,
\int_0^1 ds \int_0^{H(s)} dr \, \sigma_{r,s}(H)^{k-1}. 
\end{equation}

We deduce from Theorem \ref{theo:d} the following convergence in law:
\[
\frac{a_p}{ p^{2}} \cd_1(\tau^{(p)}) 
=\frac{a_p}{ p^{2}} \int_{[0,  p]}    C^{\tau^{(p)}}(2x)\, dx
=\int_{[0,  1]}  \frac{a_p}{ p}    C^{\tau^{(p)}}(2ps)\, ds
\xrightarrow[p\rightarrow+\infty]{(d) }  \int_{[0,1]}
  ds\,   H(s) .
\]
This gives \reff{eq:cor-res} for $k=1$. 

Thanks to equality \reff{eq:=} with $a=k-1$, we have that for $k\geq 2$ and $\bt\in \T$: 
\begin{align*}
2\cd_k(\bt)
&=k(k-1)\int_{[0, |\bt|]^2}  dx_1dx_2 \,
 |x_2-x_1|^{k-2}\,m_{C^\bt(2\bullet)} (x_1, x_2)\\
&=k(k-1)|\bt|^k\int_{[0, 1]^2}  dx_1dx_2 \,
  |x_2-x_1|^{k-2}\,m_{C^\bt(2|\bt|\bullet)} (x_1, x_2).
\end{align*}
We deduce from Theorem \ref{theo:d} the following convergence in law for
all $k\in \N^*$ such that $k\geq 2$:
\[
\frac{a_p}{ p^{k+1}} \cd_k(\tau^{(p)}) 
\xrightarrow[p\rightarrow+\infty]{(d) } \frac{k(k-1)}{2} \int_{[0,1]^2}
  dsds' \,
 |s'-s|^{k-2}\,m_H(s,s').
\]
Then use  \reff{eq:I=J} from Lemma \ref{lem:sigma-moment} to get \reff{eq:cor-res}. This ends the proof. 
\end{proof}

\section{Appendix}
\label{sec:appendix}

\subsection{Upper bounds for moments of the cost functional}
According to  \cite{fk:ld}, for  $\beta>\frac{1}{2}$ and  $k\in\N^{*}$,  
there  exists  a  finite  constant   $C_{k,\beta}$  such  that  for  all
$n\in\N^*$,
\begin{equation}\label{moment_ordre_p}
 \E\left[\left(\sum_{v\in \tn}\lvert \tnv\rvert^{\beta}\right)^{k}
 \right]\le C_{k,\beta}\,|\tn|^{k(\beta+\frac{1}{2})}. 
\end{equation}
(Notice that \reff{moment_ordre_p} is stated in \cite{fk:ld} with
$\tnv^*=\tnv\backslash \cl(\tnv)$ instead of $\tnv$; but using that
$|\tnv|= 2|\tnv^*|+1 $ it is elementary to get \reff{moment_ordre_p}.)

The  following lemma,  which plays  a key  role in  the proofs  of Lemmas
\ref{lem:A1} and  \ref{lem:A2}, is  a direct consequence  of these
upper bounds.
\begin{lem}\label{lem_mom_1}
For all $a\in [0, 1/2)$ and  $f\in\cb([0,1])$, we have for $k\in\N^{*}$:
 \begin{align}
   \E\left[|A_{n}(f)|^{k}\right]
&\le C_{k,1-a}\lVert x^{a}f\rVert_{\infty}^{k}, \label{mom_1}\\
   \E\left[A_{n}(xf^{2})\right]
&\le C_{1,2-2a}\lVert x^{a}f\rVert_{\infty}^{2}. \label{mom_3}
 \end{align}
\end{lem}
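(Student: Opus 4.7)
The plan is to reduce both inequalities to a direct application of the moment bound \reff{moment_ordre_p} from \cite{fk:ld}, using only the pointwise estimate that comes from the finiteness of $\|x^a f\|_\infty$. The whole proof is essentially algebraic bookkeeping on exponents of $|\tn|$.

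First I would note the basic pointwise bound: for every $x\in(0,1]$, $|f(x)|\le \|x^a f\|_\infty\, x^{-a}$. Inserting this into the definition \reff{eq:def-An} of $A_n(f)$ gives
\[
|A_n(f)| \le |\tn|^{-3/2}\,\|x^a f\|_\infty\,\sum_{v\in \tn} |\tnv|\cdot \left(\frac{|\tnv|}{|\tn|}\right)^{-a}
= \|x^a f\|_\infty\,|\tn|^{a-3/2}\sum_{v\in\tn}|\tnv|^{1-a}.
\]
Raising this to the $k$-th power, taking expectations, and applying \reff{moment_ordre_p} with $\beta=1-a$ (which satisfies $\beta>1/2$ precisely because $a<1/2$) yields
\[
\E[|A_n(f)|^k]\le \|x^a f\|_\infty^k\, |\tn|^{k(a-3/2)}\,C_{k,1-a}\,|\tn|^{k(1-a+1/2)}
= C_{k,1-a}\,\|x^a f\|_\infty^k,
\]
since the $|\tn|$-exponents cancel. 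This gives \reff{mom_1}.

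For \reff{mom_3} I would proceed identically. Unfolding $A_n(xf^2)$ gives an extra factor $|\tnv|/|\tn|$ and replaces $f$ by $f^2$, leading to
\[
A_n(xf^2)=|\tn|^{-5/2}\sum_{v\in\tn}|\tnv|^{2}\,f\!\left(\frac{|\tnv|}{|\tn|}\right)^{2}
\le \|x^a f\|_\infty^{2}\,|\tn|^{2a-5/2}\sum_{v\in\tn}|\tnv|^{\,2-2a}.
\]
Taking expectations and applying \reff{moment_ordre_p} with $k=1$ and $\beta=2-2a>1>1/2$ again makes the $|\tn|$-exponents cancel, giving $\E[A_n(xf^2)]\le C_{1,2-2a}\,\|x^a f\|_\infty^{2}$.

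There is essentially no obstacle: the only thing to check is that the exponent condition $\beta>1/2$ required to invoke \reff{moment_ordre_p} is met, which is exactly the restriction $a\in[0,1/2)$ in the statement, and that the arithmetic $k(a-3/2)+k(1-a+1/2)=0$ (resp. $(2a-5/2)+(2-2a+1/2)=0$) works out so that the bound is independent of $n$.
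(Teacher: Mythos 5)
Your proof is correct and follows essentially the same route as the paper: bound $|f|$ pointwise by $\lVert x^a f\rVert_\infty\, x^{-a}$ on $(0,1]$, collect the powers of $|\tn|$, and invoke \reff{moment_ordre_p} with $\beta=1-a$ (resp.\ $\beta=2-2a$), checking that $a<1/2$ guarantees $\beta>1/2$ and that the exponents of $|\tn|$ cancel. The paper's proof is just a more compact writing of exactly this computation.
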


\begin{proof}
Let $k\in\N^*$. Using (\ref{moment_ordre_p}), we have:
\[
 \E\left[|A_{n}(f)|^{k}\right]
\le |\tn|^{-\frac{3}{2}k}\lVert x^{a}f\rVert_{\infty}^{k}\,\E\left[\left(\sum_{v\in \tn}\frac{|\tnv|^{1-a}}{|\tn|^{-a}}\right)^{k}\right]
\le C_{k,1-a}\lVert x^{a}f\rVert_{\infty}^{k},
\]
which gives \reff{mom_1}. Moreover, we also have:
\[
\E\left[A_{n}(xf^{2})\right]
\le |\tn|^{-\frac{3}{2}}\lVert x^{a}f\rVert_{\infty}^{2}\,\E\left[\sum_{v\in \tn}\frac{|\tnv|^{2-2a}}{|\tn|^{1-2a}}\right]
\le C_{1,2-2a}\lVert x^{a}f\rVert_{\infty}^{2}\,
\]
and we get \reff{mom_3}.
\end{proof}

\subsection{A lemma for binomial random variables}
We give a lemma used  for the proof of Lemma \ref{lem:A4}.
 
\begin{lem}\label{lem_binomial}
  Let $X$ be a binomial random variable with parameter $(n,p)\in
  \N^*\times  (0,1)$.
  \begin{enumerate}
   \item[(i)] For $a\in (0,1]$, we have \[\E\left[\left(2X+1\right)^{-a}\right]\le \left(1 \wedge \frac{1}{p(n+1)}\right)^{a}\cdot\]
   \item[(ii)] Let  $f\in \mathcal{C}((0,1])$ be locally Lipschitz
     continuous and $b\in (0,1)$.
  Then we have:
\[
\E\left[\left|f\left(\frac{2X+1}{2n+1}\right)-f(p)\right|\right]
\le \frac{\norm{x^{b}f'} _{\text{esssup}}}{1-b}
\left(p^{-\frac{b}{2}}+p^{\frac{1}{2}-b}\right) \, n^{-1/2}.
\]
  \end{enumerate}
 \end{lem}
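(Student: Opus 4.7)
The plan for part (i) is to combine two elementary bounds. The trivial one, valid because $2X+1\ge 1$, is $\E[(2X+1)^{-a}]\le 1$. For the complementary estimate, I would first compute $\E[(X+1)^{-1}]$ in closed form using the identity $\binom{n}{k}/(k+1)=\binom{n+1}{k+1}/(n+1)$, which recasts the sum as a binomial expansion at level $n+1$ and yields
\[
 \E[(X+1)^{-1}] \;=\; \frac{1-(1-p)^{n+1}}{(n+1)p}\;\le\;\frac{1}{(n+1)p}.
\]
Since $2X+1\ge X+1$, this gives $\E[(2X+1)^{-1}]\le 1/((n+1)p)$. For $a\in(0,1]$ the map $t\mapsto t^a$ is concave on $(0,+\infty)$, so Jensen's inequality gives $\E[(2X+1)^{-a}]\le\bigl(\E[(2X+1)^{-1}]\bigr)^a\le\bigl((n+1)p\bigr)^{-a}$, and taking the minimum with the trivial bound yields~(i).

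For~(ii), set $Y=(2X+1)/(2n+1)$. The assumption on $f$ gives $|f'(u)|\le\norm{x^bf'}_{\text{esssup}}u^{-b}$ almost everywhere on $(0,1]$, hence
\[
 |f(Y)-f(p)|\;\le\;\norm{x^bf'}_{\text{esssup}}\Big|\int_p^Y u^{-b}\,du\Big|\;=\;\frac{\norm{x^bf'}_{\text{esssup}}}{1-b}\,|Y^{1-b}-p^{1-b}|,
\]
so the whole problem reduces to estimating $\E|Y^{1-b}-p^{1-b}|$. Applying the mean value theorem to $u\mapsto u^{1-b}$ and using that $u^{-b}$ is decreasing yields
\[
 |Y^{1-b}-p^{1-b}|\;\le\;(1-b)\,(Y\wedge p)^{-b}\,|Y-p|\;\le\;(1-b)\bigl(Y^{-b}+p^{-b}\bigr)\,|Y-p|.
\]
A direct binomial computation gives $\E[(Y-p)^2]=[4np(1-p)+(1-p)^2]/(2n+1)^2$, which I bound both as $\le Cp/n$ and as $\le C/n$ depending on the size of $p$. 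The $p^{-b}|Y-p|$ contribution, via Cauchy--Schwarz, produces the term $p^{-b}\sqrt{p/n}=p^{1/2-b}n^{-1/2}$ directly. For the $Y^{-b}|Y-p|$ contribution I would use the interpolated Cauchy--Schwarz
\[
 \E[Y^{-b}|Y-p|]\;\le\;\sqrt{\E[Y^{-b}]}\,\sqrt{\E[Y^{-b}(Y-p)^2]},
\]
splitting the singularity between the two factors. By (i) applied with exponent $b\in(0,1]$ (after multiplying back by $(2n+1)^b$), $\sqrt{\E[Y^{-b}]}$ is of order $p^{-b/2}$; the second factor is of order $n^{-1/2}$, shown by splitting the expectation on $\{Y\ge p/2\}$ (where $Y^{-b}\le 2^bp^{-b}$ and one uses $\E[(Y-p)^2]\le Cp/n$) and $\{Y<p/2\}$ (where $|Y-p|\le p$ and one uses the Chebyshev-type estimate $\P(Y<p/2)\le C/(pn)$ together with (i)).

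The main obstacle is the bookkeeping of the $p$-dependence across the two regimes $(n+1)p\ge 1$ and $(n+1)p<1$, since part~(i) only gives the non-trivial bound $((n+1)p)^{-a}$ in the first regime while the trivial bound must be substituted in the second. The two terms $p^{-b/2}n^{-1/2}$ and $p^{1/2-b}n^{-1/2}$ in the stated inequality correspond naturally to the dominant contributions in these two regimes, and the interpolated Cauchy--Schwarz step above is what prevents the naive exponent $p^{-b}$ (which a single application of Cauchy--Schwarz produces) from spoiling the bound when $p$ is small.
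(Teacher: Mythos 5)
Your part (i) is correct and is exactly the paper's argument: the closed form $\E[(1+X)^{-1}]=\frac{1-(1-p)^{n+1}}{(n+1)p}$, the comparison $2X+1\ge X+1$, and Jensen for $t\mapsto t^a$. The reduction of part (ii) to estimating $\E\big[|Y^{1-b}-p^{1-b}|\big]$ with $Y=(2X+1)/(2n+1)$ also matches the paper. The gap is in how you then split this quantity.

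Your pointwise bound $|Y^{1-b}-p^{1-b}|\le(1-b)\,(Y^{-b}+p^{-b})\,|Y-p|$ produces the term $p^{-b}\,\E[|Y-p|]$, and this term is genuinely too large when $p$ is small. Indeed $\E[Y]-p=(1-p)/(2n+1)$, so by Jensen $p^{-b}\,\E[|Y-p|]\ge p^{-b}(1-p)/(2n+1)$, which for fixed $n$ blows up like $p^{-b}$ as $p\to0$, whereas the right-hand side of the lemma only grows like $p^{-b/2}$ (note $p^{1/2-b}\le p^{-b/2}$ for $p\le1$ and $b\le 1$). Hence for $p\lesssim n^{-1/b}$ no estimate of $\E[|Y-p|]$ can rescue this term; your Cauchy--Schwarz input $\E[(Y-p)^2]\le Cp/n$ is in any case false in that regime precisely because of the bias contribution $(1-p)^2/(2n+1)^2$ that you computed. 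So the ``bookkeeping across the two regimes $(n+1)p\ge 1$ and $(n+1)p<1$'' that you defer is not bookkeeping: the decomposition itself is lossy and cannot be completed. Uniformity in small $p$ matters here, since the lemma is applied with $p=\sigma_{r,s}$, which is arbitrarily small, and the exponents $-b/2$ and $\tfrac12-b$ are exactly what keeps the resulting $Z$-functionals integrable.

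The paper avoids this by inserting the \emph{unbiased} point $X/n$ between $p$ and $Y$ and using the asymmetric inequality $|x^{1-b}-y^{1-b}|\le x^{-b}|x-y|$ so as to choose which endpoint pays the negative power: the prefactor $p^{-b}$ is paired only with $|p-X/n|$, which has mean zero, so $\E[|p-X/n|]\le\sqrt{p(1-p)/n}$ and one gets $p^{1/2-b}n^{-1/2}$ with no bias term; the deterministic discrepancy $|Y-X/n|\le (2n+1)^{-1}$ is paired with the random prefactor $Y^{-b}=(2n+1)^{b}(2X+1)^{-b}$, whose expectation is controlled by part (i) together with $(1\wedge x^{-1})^{b}\le x^{-b/2}$, yielding $p^{-b/2}n^{-1/2}$. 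This also removes the need for your interpolated Cauchy--Schwarz and the split on $\{Y<p/2\}$, which as sketched would moreover require moments such as $\E[Y^{-2b}]$ that fall outside the range of part (i) when $b>1/2$.
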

                                            
 \begin{proof}
We prove (i). Let $a\in (0,1]$.
  Let $X$ be a binomial random variable with parameter $(n,p)$.  An
  elementary computation gives that:
 \begin{equation}\label{moment_neg}
  \E\left[\frac{1}{1+X}\right]=\frac{1-(1-p)^{n+1}}{p(n+1)}\cdot
 \end{equation}
  Using Jensen inequality and \reff{moment_neg}, we get
\[
\E\left[\left(\frac{1}{2X+1}\right)^{a}\right]
\le \E\left[\frac{1}{2X+1}\right]^{a}
\le \E\left[\frac{1}{1+X}\right]^{a}
   \le \left(1 \wedge \frac{1}{p(n+1)}\right)^{a}.
\]   

We prove (ii).  Let $b\in(0,1)$. We have $ \left|f\left(\frac{2X+1}{2n+1}\right)-f(p)\right|\le \norm{
    x^{b}f'}_{\text{esssup}}
\left|\int_{p}^{\frac{2X+1}{2n+1}}x^{-b}dx\right|  $ and thus
\begin{equation}\label{ineq_1}
  \left|f\left(\frac{2X+1}{2n+1}\right)-f(p)\right|\le
  \frac{\norm{x^{b}f'}_{\text{esssup}}}{1-b}
  \left|\left(\frac{2X+1}{2n+1}\right)^{1-b}\!\!\!\!-p^{1-b}\right|  .  
 \end{equation}
  We decompose the right-hand side term into two parts:
  \begin{equation}\label{ineq_2}
   \left|\left(\frac{2X+1}{2n+1}\right)^{1-b}-p^{1-b}\right|\le \left|p^{1-b}-\left(\frac{X}{n}\right)^{1-b}\right|
                                                                 +\left|\left(\frac{2X+1}{2n+1}\right)^{1-b}-\left(\frac{X}{n}\right)^{1-b}\right|\cdot 
  \end{equation}
We shall use the following key inequality: for all $x,y>0$  and $0<b<1$, we have:
\begin{equation}\label{ineq_3}
 |x^{1-b}-y^{1-b}|\le x^{-b}|x-y|.
\end{equation}
For  the first  term  of the  right hand  side  of \reff{ineq_2},  using
\reff{ineq_3},                          we                          have
$\left|p^{1-b}-\left(\frac{X}{n}\right)^{1-b}\right|\le
p^{-b}\left|p-\frac{X}{n}\right|$. Hence, we get:
\begin{equation}
\label{ineq_4}
 \E\left[\left|p^{1-b}-\left(\frac{X}{n}\right)^{1-b}\right|\right]
\leq
 p^{-b} \sqrt{\Var\left(X/n\right)}
\leq  p^{\frac{1}{2}-b}n^{-1/2}.
\end{equation}
For  the second   term  of the  right hand  side  of \reff{ineq_2},
using \reff{ineq_3} again, we get: 
\[
\left|\left(\frac{2X+1}{2n+1}\right)^{1-b}-\left(\frac{X}{n}\right)^{1-b}\right|
\leq  \left(\frac{2X+1}{2n+1}\right)^{-b}
\left|\frac{2X+1}{2n+1}-\frac{X}{n}\right|
\leq  \frac{(2n+1)^{b-1}}{(2X+1)^{b}}\cdot
\]
This gives, using (i) and $|1 \wedge (1/x)|^b\leq  x^{-b/2}$ for $x>0$,
that: 
\begin{equation}
\label{ineq_5}
\E\left[\left|\left(\frac{2X+1}{2n+1}\right)^{1-b}-\left(\frac{X}{n}\right)^{1-b}\right|\right] 
  \le (2n+1)^{b-1}p^{-\frac{b}{2}}(n+1)^{-\frac{b}{2}}
  \le p^{-\frac{b}{2}}n^{-1/2}.
\end{equation}
 Using \reff{ineq_1}, \reff{ineq_2}, \reff{ineq_4} and \reff{ineq_5}, we get the expected result.
 \end{proof}
\subsection{Some results on the Gamma function}
We give here some results on the moments of  Gamma random variables.
\begin{lem}
  Let  $k,\ell,n\in(0,+\infty )$  and $\alpha,\beta,\gamma\in [0, +\infty )$  such  that
  $k+\ell+n+\alpha+\beta>\gamma$.                                    Let
  $\Gamma_{k},\Gamma_{\ell},\Gamma_{n}$   be  three   independent  Gamma
  random variables  with respective parameter $(k,1)$,  $(\ell,1)$ and $(n,1)$.
  Then we have:
 \[\E\left[\frac{\Gamma_{k}^{\alpha}\,\Gamma_{\ell}^{\beta}}{\left(\Gamma_{k}+\Gamma_{\ell}+\Gamma_{n}\right)^{\gamma}}\right]=
  \frac{\Gamma(k+\alpha)}{\Gamma(k)}\frac{\Gamma(\ell+\beta)}{\Gamma(\ell)}\frac{\Gamma(k+\ell+n+\alpha+\beta-\gamma)}{\Gamma(k+\ell+n+\alpha+\beta)}\cdot 
 \]
\end{lem}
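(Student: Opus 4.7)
The plan is to use the well-known Gamma/Dirichlet decomposition of independent Gamma random variables. Writing $S = \Gamma_k+\Gamma_\ell+\Gamma_n$, the vector $(D_1,D_2,D_3)=(\Gamma_k/S,\Gamma_\ell/S,\Gamma_n/S)$ follows the Dirichlet distribution with parameters $(k,\ell,n)$ and is independent of $S$, which is Gamma distributed with parameter $(k+\ell+n,1)$. Under this change of variables,
\[
\frac{\Gamma_k^{\alpha}\,\Gamma_\ell^{\beta}}{\left(\Gamma_k+\Gamma_\ell+\Gamma_n\right)^{\gamma}}
= S^{\alpha+\beta-\gamma}\, D_1^{\alpha}\, D_2^{\beta},
\]
so the independence of $S$ and $(D_1,D_2,D_3)$ reduces the computation to a product of two elementary integrals.

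First, for the radial part, the Gamma moment formula gives
\[
\E\left[S^{\alpha+\beta-\gamma}\right]
= \frac{1}{\Gamma(k+\ell+n)}\int_0^\infty s^{k+\ell+n+\alpha+\beta-\gamma-1}\expp{-s}\,ds
= \frac{\Gamma(k+\ell+n+\alpha+\beta-\gamma)}{\Gamma(k+\ell+n)},
\]
where the assumption $k+\ell+n+\alpha+\beta>\gamma$ ensures the integral converges at $0$. For the angular part, using the Dirichlet density and the definition of the Beta integral on the simplex,
\[
\E\left[D_1^{\alpha}D_2^{\beta}\right]
= \frac{\Gamma(k+\ell+n)}{\Gamma(k)\Gamma(\ell)\Gamma(n)}
\int_{\Delta} x_1^{k+\alpha-1}x_2^{\ell+\beta-1}x_3^{n-1}\,dx_1\,dx_2
= \frac{\Gamma(k+\ell+n)}{\Gamma(k)\Gamma(\ell)}\cdot\frac{\Gamma(k+\alpha)\Gamma(\ell+\beta)}{\Gamma(k+\ell+n+\alpha+\beta)},
\]
where $\Delta$ is the $2$-simplex and $x_3=1-x_1-x_2$. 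Multiplying the two expressions, the factor $\Gamma(k+\ell+n)$ cancels and one obtains exactly the claimed formula.

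The only point that requires any care is the justification of the Gamma/Dirichlet decomposition, but this is standard: it follows directly from the change of variables $(\Gamma_k,\Gamma_\ell,\Gamma_n)=(SD_1,SD_2,SD_3)$ in the joint density $\prod\Gamma(\cdot)^{-1}x^{k-1}y^{\ell-1}z^{n-1}\expp{-(x+y+z)}$, whose Jacobian is $s^2$ and which then factorizes as a Gamma$(k+\ell+n,1)$ density in $s$ times a Dirichlet$(k,\ell,n)$ density in $(D_1,D_2,D_3)$. No genuine obstacle arises; the only mild subtlety is tracking the integrability condition $k+\ell+n+\alpha+\beta>\gamma$, which is precisely the hypothesis of the lemma.
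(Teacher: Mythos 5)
Your proof is correct: the Gamma/Dirichlet (radial--angular) factorization is valid, the Jacobian computation you sketch is the standard justification of the independence of $S$ and $(D_1,D_2,D_3)$, both expectations are computed correctly (note $D_1^{\alpha}D_2^{\beta}\leq 1$ so the angular factor is automatically finite, and the hypothesis $k+\ell+n+\alpha+\beta>\gamma$ is exactly what makes the radial moment $\E[S^{\alpha+\beta-\gamma}]$ finite), and the product telescopes to the claimed formula. The paper proceeds differently: it never introduces the Dirichlet vector, but instead uses the weighting identity $\E[\Gamma_k^{\alpha}f(\Gamma_k)]=\frac{\Gamma(k+\alpha)}{\Gamma(k)}\,\E[f(\Gamma_{k+\alpha})]$ (i.e.\ that $x^{\alpha}$-biasing a Gamma$(k,1)$ law yields a Gamma$(k+\alpha,1)$ law), applied twice via successive conditioning on $(\Gamma_\ell,\Gamma_n)$ and then on the remaining variables, together with the additivity of independent Gamma variables, reducing everything to a single negative moment $\E[\Gamma_{k+\ell+n+\alpha+\beta}^{-\gamma}]$. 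The two arguments are of comparable length and rest on the same underlying Beta--Gamma algebra; yours makes the independence structure explicit once and for all and would generalize immediately to joint moments of any number of coordinates, while the paper's iterated-biasing argument avoids any change of variables and multidimensional integral at the cost of being slightly more sequential. Either proof is acceptable.
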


\begin{proof}
Elementary computations give that for all  non negative function $f\in \mathcal{B}([0,1])$,
\[\E\left[\Gamma_{k}^{\alpha}f\left(\Gamma_{k}\right)\right]
=\E\left[\Gamma_{k}^{\alpha}\right]
\E\left[f\left(\Gamma_{k+\alpha}\right)\right]
=\frac{\Gamma(k+\alpha)}{\Gamma(k)} 
\E\left[f\left(\Gamma_{k+\alpha}\right)\right]. 
\]
We deduce that:
 \begin{align*}
  \E\left[\frac{\Gamma_{k}^{\alpha}\,\Gamma_{\ell}^{\beta}}
   {\left(\Gamma_{k}+\Gamma_{\ell}+\Gamma_{n}\right)^{\gamma}}\right]  
  &=\E\left[\E\left[\left.\frac{\Gamma_{k}^{\alpha}\,
    \Gamma_{\ell}^{\beta}}{\left(\Gamma_{k}+\Gamma_{\ell}+\Gamma_{n}\right)^{\gamma}}
    \right|  \Gamma_{\ell},\Gamma_{n}\right]\right]\\
  &=\E\left[\Gamma_{k}^{\alpha}\right]\E\left[\frac{\Gamma_{\ell}^{\beta}}
    {\left(\Gamma_{\ell}+\tilde\Gamma_{k+n+\alpha}\right)^{\gamma}}\right]\\ 
  &=\E\left[\Gamma_{k}^{\alpha}\right]\E\left[\Gamma_{\ell}^{\beta}\right]
    \E\left[\frac{1}{\Gamma_{k+\ell+n+\alpha+\beta}^{\gamma}}\right]\\ 
  &=\frac{\Gamma(k+\alpha)}{\Gamma(k)}\frac{\Gamma(\ell+\beta)}
    {\Gamma(\ell)}\frac{\Gamma(k+\ell+n+\alpha+\beta-\gamma)}{\Gamma(k+\ell+n+\alpha+\beta)}, 
 \end{align*}
 where  $\tilde\Gamma_{k+n+\alpha}$  is  a Gamma  random  variable  with
 parameter   $(k+n+\alpha,1)$   independent    of   $\Gamma_\ell$,   and
 $\Gamma_{k+\ell+n+\alpha+\beta}$  is  a   Gamma  random  variable  with
 parameter $(k+\ell+n+\alpha+\beta,1)$.
\end{proof}
We directly deduce the following result.
\begin{cor}\label{cor:moments_exp_gam}
Let $m\geq 2$. Let $(E_i, \, 1\le i \le m)$ be independent exponential 
random variables with parameter $1$ and
$S_{m}=\sum_{i=1}^{m}E_{i}$. Then for all $\alpha,\beta,\gamma\in [0,
+\infty )$ such that $m+\alpha+\beta>\gamma$, we have
\[
\E\left[\frac{E_{1}^{\alpha}E_{2}^{\beta}}{S_{m}^{\gamma}}\right]
=\Gamma(1+\alpha)\Gamma(1+\beta)\frac{\Gamma(m+\alpha+\beta-\gamma)}
{\Gamma(m+\alpha+\beta)}\cdot
\]
\end{cor}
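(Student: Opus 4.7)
The plan is to reduce the corollary directly to the preceding lemma by exhibiting the appropriate Gamma representation of the exponential variables. Recall that an exponential random variable with parameter $1$ is exactly a $\Gamma(1,1)$ random variable, so $E_1 \stackrel{(d)}{=} \Gamma_1$ and $E_2 \stackrel{(d)}{=} \Gamma_1$. When $m\geq 3$, the additivity of independent Gamma random variables with common scale yields that $E_3 + \cdots + E_m$ is independent of $(E_1,E_2)$ and distributed as $\Gamma_{m-2}$.

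Hence, setting $k=\ell=1$ and $n=m-2$ in the preceding lemma, the triple $(E_1, E_2, E_3+\cdots+E_m)$ has the joint law of $(\Gamma_k, \Gamma_\ell, \Gamma_n)$, and in particular $S_m = E_1 + E_2 + (E_3 + \cdots + E_m)$ has the law of $\Gamma_k+\Gamma_\ell+\Gamma_n$. The integrability condition $k+\ell+n+\alpha+\beta>\gamma$ of the lemma is exactly $m+\alpha+\beta>\gamma$. Substituting $k=\ell=1$ into the conclusion of the lemma and using $\Gamma(1)=1$ immediately collapses the right-hand side to
\[
\Gamma(1+\alpha)\,\Gamma(1+\beta)\,\frac{\Gamma(m+\alpha+\beta-\gamma)}{\Gamma(m+\alpha+\beta)},
\]
which is precisely the claimed identity.

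The only point that needs a separate remark is the boundary case $m=2$, where the ``third block'' $E_3+\cdots+E_m$ is empty so that $n=m-2=0$ falls outside the range $n\in(0,+\infty)$ allowed by the lemma. Two routes handle this. Either one observes that the proof of the lemma, which only invokes the identity $\E[\Gamma_k^\alpha f(\Gamma_k+Z)]=\E[\Gamma_k^\alpha]\,\E[f(\Gamma_{k+\alpha}+Z)]$ for the \emph{positive} parameters $k,\ell$ together with an independent random variable $Z$, carries through verbatim with the convention $\Gamma_0\equiv 0$; or one gives a direct derivation via the classical Beta--Gamma identity, noting that $E_1/S_2$ is $\mathrm{Beta}(1,1)$-distributed and independent of $S_2\sim\Gamma_2$, so that
\[
\E\!\left[\frac{E_1^\alpha E_2^\beta}{S_2^\gamma}\right]
= \E\!\left[\left(\tfrac{E_1}{S_2}\right)^{\!\alpha}\!\left(1-\tfrac{E_1}{S_2}\right)^{\!\beta}\right]\,\E\!\left[S_2^{\alpha+\beta-\gamma}\right],
\]
and the two factors evaluate to the Beta and Gamma moments that yield the formula. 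Since no genuine obstacle arises in either case, the corollary really is an immediate specialization of the preceding lemma, as announced by the phrase \emph{we directly deduce}.
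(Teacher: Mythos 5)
Your proof is correct and follows the same route as the paper, which simply specializes the preceding Gamma lemma with $k=\ell=1$ and $n=m-2$ (the paper states only ``we directly deduce the following result''). Your extra care with the boundary case $m=2$, where $n=0$ falls outside the lemma's stated range $n\in(0,+\infty)$, is a legitimate point the paper glosses over, and both of your fixes (the convention $\Gamma_0\equiv 0$, or the Beta--Gamma factorization) work.
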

\subsection{Elementary computations on the branch length of $\ct_{[n]}$}
We keep  notations from Section \ref{sec:premlim-lem}.   Recall that the
density of  $(h_{n,v}, v\in \tn)$  is, conditionally on $\tn$,  given by
\reff{eq:dens-h}. Recall $L_n=\sum_{v\in  \tn}h_{n,v}$ denotes the total
length of $  \ct_{[n]}$.  It is easy to
deduce that the density of $L_n$, conditionally on $\tn$, is given by:
\begin{equation}
   \label{eq:dens-Ln}
f_{L_n}(x)= 2 \frac{\alpha^{n+1}}{n!} x^{2n+1} \expp{-\alpha x^2}
\ind_{\{x>0\}}.  
\end{equation}
In particular, the random variable $L_n$ is independent of $\tn$. 
The first two moments of $L_n$ are given by 
\begin{align}
\label{eq:moments_Ln1}
\E[L_{n}]=\frac{1}{\sqrt{\alpha}}\frac{\Gamma(n+\frac{3}{2})}{\Gamma(n+1)}=
\frac{n+1}{\sqrt{\alpha}}\frac{\Gamma(n+\frac{3}{2})}{\Gamma(n+2)}
\quad
\text{and}
\quad
\E[L_{n}^{2}]=\frac{n+1}{\alpha}\cdot
\end{align}
According to \cite{g:seirg}, we have that 
$ (n+1)^{s-1} \leq {\Gamma(n+s)}/{\Gamma(n+1)}\leq n^{s-1}$ 
for  $n\in \N^*$ and $s\in [0,1]$. Hence, we obtain:
\begin{align}
\label{eq:moments_Ln2}
\frac{1}{\sqrt{\alpha}}\frac{n+1}{\sqrt{n+2}}
\leq
\E[L_{n}]
\leq
\frac{\sqrt{n+1}}{\sqrt{\alpha}}
\quad
\text{and}
\quad
\Var(L_{n})\leq\inv{{\alpha}}\cdot
\end{align}
Using that $L_n=\sum_{v\in   \tn}h_{n,v}$ and that,  conditionally on
$\tn$, the random variables  $(h_{n,v}, v\in \tn)$ are exchangeable, we
deduce that $\E[h_{n,\emptyset}]=\E[L_n]/(2n+1)$ and thus:
\begin{align}
\label{mh1}
\frac{1}{2\sqrt{\alpha (n+1)}}
\leq
\E[h_{n,\emptyset}]
=
\frac{1}{2\sqrt{\alpha}}\frac{\Gamma(n+\frac{1}{2})}{\Gamma(n+1)}
\leq
\frac{1}{2\sqrt{\alpha n}}\cdot
\end{align}

We finish by a  result on the covariance of the  branch lengths, used in
Lemma~\ref{lem:A2}.      We  define
$Y_{n,v}=\sqrt{n}  (\E[h_{n,v}]-h_{n,v})$ for $  v\in \tn$. Notice that
$(Y_{n,v}, v\in \tn)$ has   an
exchangeable distribution conditionally on $\tn$.
\begin{lem}\label{moment_hauteurs} Let $n\in \N^*$.   We have:
 \begin{equation}\label{mh4}
   \left|\E[Y_{n,\emptyset}Y_{n,1}]\right|\le \frac{1}{8\alpha n}  \quad\text{ and } \quad
   \E[Y_{n,\emptyset}^{2}]\le \frac{1}{2\alpha}\cdot
 \end{equation}
\end{lem}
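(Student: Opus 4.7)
The plan is to obtain both bounds by computing the covariance structure of the branch lengths $(h_{n,v}, v\in \tn)$ explicitly and then exploiting the constraint $\sum_{v\in\tn} h_{n,v}=L_n$ together with the exchangeability (conditional on $\tn$) to trade second moments for variances of $L_n$, for which we already have sharp bounds via \reff{eq:moments_Ln1} and \reff{eq:moments_Ln2}.

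First I would use Lemma \ref{lem:hh} to write, conditionally on $\tn$, $h_{n,v}\stackrel{(d)}{=}L_n E_v/S_{\tn}$ with $S_{\tn}=\sum_{v\in\tn}E_v$ a sum of $|\tn|=2n+1$ independent standard exponentials, independent of $L_n$. Since $L_n$ is itself independent of $\tn$, one gets
\[
\E[h_{n,\emptyset}^2]=\E[L_n^2]\,\E\!\left[\frac{E_\emptyset^2}{S_{\tn}^2}\right],\qquad \E[h_{n,\emptyset}h_{n,1}]=\E[L_n^2]\,\E\!\left[\frac{E_\emptyset E_1}{S_{\tn}^2}\right].
\]
The two exponential expectations are evaluated directly with Corollary \ref{cor:moments_exp_gam} (with $m=2n+1$, $\gamma=2$, and $(\alpha,\beta)=(2,0)$ or $(1,1)$), yielding $2/[(2n+1)(2n+2)]$ and $1/[(2n+1)(2n+2)]$ respectively. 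Combined with $\E[L_n^2]=(n+1)/\alpha$ from \reff{eq:moments_Ln1}, this gives the closed forms $\E[h_{n,\emptyset}^2]=1/[\alpha(2n+1)]$ and $\E[h_{n,\emptyset}h_{n,1}]=1/[2\alpha(2n+1)]$.

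The bound on $\E[Y_{n,\emptyset}^2]$ is then immediate: since $\E[Y_{n,\emptyset}^2]=n\Var(h_{n,\emptyset})\leq n\,\E[h_{n,\emptyset}^2]=n/[\alpha(2n+1)]\leq 1/(2\alpha)$. For the covariance bound, which is the slightly more delicate part, the key step is to use the exchangeability identity
\[
\Var(L_n)=(2n+1)\Var(h_{n,\emptyset})+(2n+1)(2n)\,\Cov(h_{n,\emptyset},h_{n,1})
\]
to write $\Cov(h_{n,\emptyset},h_{n,1})$ as an explicit function of $\Var(L_n)$ and $\E[h_{n,\emptyset}^2]$. A short algebraic manipulation, substituting the closed form for $\E[h_{n,\emptyset}^2]$ just obtained, leads to the clean identity
\[
\E[Y_{n,\emptyset}Y_{n,1}]=n\,\Cov(h_{n,\emptyset},h_{n,1})=\frac{n}{(2n+1)^2}\left(\Var(L_n)-\frac{1}{2\alpha}\right).
\]

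To finish, I would combine this identity with the two-sided bound $0\leq \Var(L_n)\leq 1/\alpha$ coming from \reff{eq:moments_Ln2}, which gives $|\Var(L_n)-1/(2\alpha)|\leq 1/(2\alpha)$, and then use $(2n+1)^2\geq 4n^2$ to conclude $|\E[Y_{n,\emptyset}Y_{n,1}]|\leq 1/(8\alpha n)$. I do not anticipate a genuine obstacle here; the only step that requires some care is the exchangeability decomposition of $\Var(L_n)$, which must be combined with the explicit value of $\E[h_{n,\emptyset}^2]$ in just the right way so that the $(2n+1)^{-1}$ and $(2n+1)^{-2}$ terms telescope into the compact expression involving $\Var(L_n)-1/(2\alpha)$; this is what makes the coefficient $1/(8\alpha n)$ sharp.
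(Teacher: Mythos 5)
Your proof is correct and follows essentially the same route as the paper: both compute $\E[h_{n,\emptyset}^2]=1/[\alpha(2n+1)]$ and $\E[h_{n,\emptyset}h_{n,1}]=1/[2\alpha(2n+1)]$ exactly via Lemma \ref{lem:hh}, Corollary \ref{cor:moments_exp_gam} and \reff{eq:moments_Ln1}, and then bound $n\Var(h_{n,\emptyset})\le n\E[h_{n,\emptyset}^2]\le 1/(2\alpha)$. The only (cosmetic) difference is in the covariance bound, where the paper subtracts $\E[h_{n,\emptyset}]^2$ directly using the two-sided estimate \reff{mh1}, while you reroute through the exchangeability decomposition of $\Var(L_n)$ and the bound $\Var(L_n)\le 1/\alpha$ of \reff{eq:moments_Ln2} — both of which rest on the same Gamma-ratio inequality, and your identity $\E[Y_{n,\emptyset}Y_{n,1}]=\frac{n}{(2n+1)^2}\bigl(\Var(L_n)-\frac{1}{2\alpha}\bigr)$ checks out.
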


\begin{proof}
 Using Lemma~\ref{lem:hh} and its notations, and
 Corollary~\ref{cor:moments_exp_gam} and \reff{eq:moments_Ln1}, we have,
 with $\bt\in \T$ full binary such that $|\bt|=2n+1$:  
 \begin{align*}
 \E[h_{n,\emptyset}h_{n,1}]=\E\left[L_n^{2}\right]\E\left[\frac{E_{\emptyset}E_{1}}{S_\bt^{2}}\right]&=\frac{n+1}{\alpha}\frac{1}{2(n+1)(2n+1)}=\frac{1}{2\alpha}\frac{1}{2n+1},  
 \end{align*}
and 
\begin{align*}
 \E[h_{n,\emptyset}^2]
=\
\E\left[L_n^{2}\right]\E\left[\frac{E_{\emptyset}^2}{S_\bt^{2}}\right]
=\frac{n+1}{\alpha}\frac{1}{(n+1)(2n+1)}=\frac{1}{\alpha}\frac{1}{2n+1}   \cdot
 \end{align*}
The lemma is then a consequence of these equalities and \reff{mh1}.
\end{proof}

\subsection{A deterministic representation formula}
\label{sec:proofZ}

\begin{lem}
   \label{lem:sigma-moment}
   Let $h\in \cc_+([0,1])$. We
   have that for all $a> 0$:
\begin{equation}
   \label{eq:I=J}
2\int_0^1 ds  \int_0^{h(s)} dr \, \sigma_{r,s}(h)^{a}=a(a+1) \int_{[0,1]^2}
 |s'-s|^{a-1}\,m_h(s,s')\, dsds' .
\end{equation}
\end{lem}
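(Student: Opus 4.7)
The plan is to start from the right-hand side, use Fubini to introduce a level variable $r$, and exploit the fact that the superlevel set $\{m_h(s,s')\geq r\}$ in $[0,1]^2$ is a disjoint union of squares indexed by the excursion intervals of $h$ above $r$.

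First I would write $m_h(s,s')=\int_0^\infty \ind_{\{r\leq m_h(s,s')\}}\,dr$, so that by Fubini
\[
\int_{[0,1]^2} |s'-s|^{a-1} m_h(s,s')\,ds\,ds' = \int_0^\infty dr\int_{[0,1]^2} |s'-s|^{a-1}\ind_{\{m_h(s,s')\geq r\}}\,ds\,ds'.
\]
For fixed $r\geq 0$, I would observe that $m_h(s,s')\geq r$ forces both $s$ and $s'$ to lie in the same maximal interval $I$ on which $h\geq r$; conversely on any such $I$ one has $m_h(s,s')\geq r$. Hence
\[
\{(s,s')\in[0,1]^2:m_h(s,s')\geq r\}=\bigsqcup_{I\in \mathcal E_r} I\times I,
\]
where $\mathcal E_r$ is the (at most countable) family of excursion intervals of $h$ above level $r$.

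The next step is a direct computation on a single square. For $I$ of length $\ell=|I|$, the change of variables reducing to $[0,\ell]^2$ gives
\[
\int_I\int_I |s'-s|^{a-1}\,ds\,ds' = 2\int_0^\ell dv\int_0^v (v-u)^{a-1}\,du = \frac{2\,\ell^{a+1}}{a(a+1)}\cdot
\]
Summing over $I\in\mathcal E_r$ and using that for $s\in I$ one has $\sigma_{r,s}(h)=|I|$, while $\sigma_{r,s}(h)=0$ when $h(s)<r$, I would identify
\[
\sum_{I\in\mathcal E_r}|I|^{a+1}=\sum_{I\in\mathcal E_r}\int_I \sigma_{r,s}(h)^a\,ds=\int_0^1 \sigma_{r,s}(h)^a\,\ind_{\{h(s)\geq r\}}\,ds.
\]

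Plugging back and applying Fubini once more, the right-hand side of \reff{eq:I=J} becomes
\[
a(a+1)\cdot\frac{2}{a(a+1)}\int_0^\infty\!dr\int_0^1 \sigma_{r,s}(h)^a\,\ind_{\{h(s)\geq r\}}\,ds
=2\int_0^1 ds\int_0^{h(s)}\sigma_{r,s}(h)^a\,dr,
\]
which is exactly the left-hand side. No step is really an obstacle here; the only point requiring care is the structural observation that the level set $\{m_h(s,s')\geq r\}$ decomposes into squares over the excursions of $h$ above $r$, which is immediate from the definition of $m_h$ and the continuity of $h$, and the verification that all applications of Fubini are legitimate for $a>0$ (integrability of $|s'-s|^{a-1}$ on compact intervals).
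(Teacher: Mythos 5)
Your argument is correct, and it is genuinely different from the one in the paper. You work ``horizontally'': after writing $m_h(s,s')=\int_0^\infty \ind_{\{m_h(s,s')\geq r\}}\,dr$ and applying Tonelli (legitimate since all integrands are non-negative, and $|s'-s|^{a-1}$ is locally integrable for $a>0$), you decompose the superlevel set $\{(s,s'): m_h(s,s')\geq r\}$ into the squares $I\times I$ over the connected components $I$ of $\{h\geq r\}$, compute $\int_{I\times I}|s'-s|^{a-1}\,ds\,ds' = 2|I|^{a+1}/(a(a+1))$ explicitly, and identify $\sum_I |I|^{a+1}$ with $\int_0^1 \sigma_{r,s}(h)^a\ind_{\{h(s)\geq r\}}\,ds$; this yields \reff{eq:I=J} for every $a>0$ in one stroke. (The only point worth a sentence in a polished write-up is that the possibly uncountably many degenerate components of the closed set $\{h\geq r\}$ contribute nothing, since $\sigma_{r,s}(h)=0$ there and $a>0$, while the non-degenerate ones form a countable family.) The paper instead argues ``vertically'' in the spirit of a moment computation: it first proves the identity for integer exponents $a\in\N^*$ by expanding $\sigma_{r,s}(h)^a$ as an $a$-fold integral and integrating out $r$, then extends to general $C^2$ weights $g$ in $I(g)=J(g)$ via Bernstein polynomial approximation, and finally reaches $g=x^{a-1}$ for non-integer $a\in(0,3)$ by a further approximation with monotone or dominated convergence. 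Your route is shorter, avoids all approximation machinery, and treats all $a>0$ uniformly; the paper's route has the side benefit of establishing the identity for a whole class of $C^2$ weight functions rather than only powers, though only the power case is used in the sequel.
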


\begin{proof}
In this proof only, we shall write $m(s,t)$ and $\sigma_{r,s}$
respectively for $m_h(s,t)$ and $\sigma_{r,s}(h)$. Recall that
 $\sigma_{r,s}=\int_{0}^{1}{dt}\, \ind_{\{m(s,t)\ge r\}}$. We deduce
 that  $\int_{0}^{1}dt\, m(s,t)=\int_{0}^{h(s)}dr\, \sigma_{r,s}$ for
 every $s\in [0,1]$. Hence, the result is obvious for $a=1$. 

If $g\in \mathcal{B}([0,1])$ is a non negative function such that
$x^2g\in \cc^2([0,1])$ or if $g=x^{a-1}$ for $a>0$, we set: 
\[
I(g)= \int_{0}^{1}ds\int_{0}^{h(s)}dr\;\sigma_{r,s}\, g(\sigma_{r,s})
\quad  \text{and}\quad
 J(g)= \int_{0<s<t<1}\,ds\,dt \:  [x^{2}g]^{''}(t-s) \; m(s,t).
\]
We  then  have  to  prove  that  $J(g)=I(g)$  for  $g=x^{a-1}$  for  all
$a>0$. First of all, remark  that if $f,g\in \mathcal{B}([0,1])$ are non
negative functions such that $x^2f, x^2g\in \cc^2([0,1]$, we have:
\begin{equation}
\label{eq:cont_I}
\lvert I(g) -I(f)\rvert 
\le \int_{0}^{1}{ds}\int_{0}^{h(s)}{dr}\;\sigma_{r,s}
\left| g(\sigma_{s,r}) -f(\sigma_{s,r})\right| 
\le \lVert g-f \rVert_{\infty} \norm{h}_\infty 
\end{equation}
and
\begin{equation}
\label{eq:cont_J}
\lvert J(g) -J(f)\rvert 
 \le \lVert (x^{2}g)^{''} - (x^{2}f)^{''}\rVert_{\infty}
 \int_{0<s<t<1}\!\!\! ds\,dt\: m(s,t)
\leq \lVert (x^{2}g)^{''} - (x^{2}f)^{''}\rVert_{\infty}  \norm{h}_\infty .
\end{equation}
The proof of $J(g)=I(g)$ when $g=x^{a-1}$ is divided in 3 steps. First
of all, we prove the result when $a \in \N^*$, which
gives the equality when $g$ is polynomial. Then we get the case when
$g\in \cc^2([0,1])$ by Bernstein's approximation. This gives the case
$a \geq 3$. Finally, we give the result for $a \in
(0,3)\backslash \{1, 2\}$. 

\subsection*{1st step} Let $g=x^{a-1}$  with $a \in \N^*$. We have:
\begin{align*}
 I(x^{a-1})
&=\int_{0}^{1}{ds}\int_{0}^{h(s)}dr{\left(\int_{0}^{1}{dt}\, \ind_{\{m(s,t)\ge
  r\}}\right)^{a}} \\ 
&=\int_{[0,1]^{a+1}}ds_{1}\dots
  ds_{a+1}\left(\int_{0}^{h(s_1)}{dr}\, \ind_{\{m(s_{1},s_{2})\ge
  r\}}\dots \ind_{\{m(s_{1},s_{a+1})\ge r\}}\right) \\ 
&=\int_{[0,1]^{a+1}}ds_{1}\dots
  ds_{a+1}\left(\min(m(s_{1},s_{2}),\dots,m(s_{1},s_{a+1})\right)\\
&=\int_{[0,1]^{a+1}}ds_{1}\dots ds_{a+1}\left(m\left(\min_{1\le i
  \le a+1} s_i,\max_{1\le i\le a+1} s_i\right)\right).
\end{align*}
We have:
\begin{multline}
\int_{[0,1]^{a+1}}ds_{1}\dots ds_{a+1}\left(m\left(\min_{1\le i
  \le a+1} s_i,\max_{1\le i\le a+1} s_i\right)\right) \\ 
\begin{aligned}
   &=a(a+1)\int_{0<s<t<1} \,ds \,dt\int_{[s,t]^{a-1}}ds_{1}\dots
  ds_{a-1} \; m(s,t) \\ 
   \label{eq:=}
&=a(a+1)\int_{0<s<t<1} m(s,t)(t-s)^{a -1} \,ds\,dt .
\end{aligned}
\end{multline}
This gives $I(x^{a-1})=J(x^{a-1})$.

\subsection*{2nd  step}  Let  $g\in  \cc^2([0,1])$  be  a  non  negative
function. For $n\in  \N$, we define the  associated Bernstein polynomial
$B_n(g)$ by:
\[
B_n(g)(x)=\sum_{k=0}^n\binom{n}{k}\,g(k/n)\, x^k(1-x)^{n-k}, \quad x\in [0,1].
\]
It is well known (see for instance,
Theorem $6.3.2$ in  \cite{p:iap}) that for every $k\in\N$  and for every
$f\in\mathcal{C}^{k}([0,1])$,
$\lim_{n\rightarrow         \infty}\lVert         f^{(k)}-B_{n}^{(k)}(f)
\rVert_{\infty}=0$.
Using                                                               that
$\lVert  (x^{2}B_n(g))^{''}-(x^{2}g)^{''}  \rVert_{\infty}  \le  2\lVert
B_n(g)-g    \rVert_{\infty}+4\lVert    B_{n}^{'}(g)-g^{'}\rVert_{\infty}
+\lVert               B_n^{''}(g)-g^{''}               \rVert_{\infty}$,
we deduce from   \reff{eq:cont_I}  and   \reff{eq:cont_J} that
$J(g)=I(g)$. 

\subsection*{3rd  step} Let   $g=x^{a-1}$  with  $a \in (0, 3)\backslash\{1,2\}$. We
approximate $g$ by  functions in $\mathcal{C}^2([0,1])$. For $\delta\in
(0,1)$, we define:
\[
g_{\delta}(x)= 
\begin{cases} 
 P_{\delta}(x)   &\mbox{if }  0\le x \le \delta,\\
   g(x) &\mbox{if } \delta\le x\le 1 ,
\end{cases}
\]
where 
 $P_{\delta}$ is the polynomial with degree $2$ such that
 $P_{\delta}(\delta)=g(\delta)=\delta^{a-1}$,
 $P_{\delta}^{'}(\delta)=g^{'}(\delta)=(a-1)\delta^{a-2}$ and
 $P_{\delta}^{''}(\delta)=g^{''}(\delta)=(a-1)(a-2)\delta^{a-3}$.
     
We shall prove that $\lim_{\delta\rightarrow 0}I(g_{\delta})=I(g)$.
We have:
\[
g_{\delta}^{''}(x)= \begin{cases}
 g^{''}(\delta)   &\mbox{if }  0\le x \le \delta,\\
g^{''}(x) &\mbox{if } \delta\le x\le 1 .
                      \end{cases}
\]
\begin{itemize}
\item   Assume   $a\in   (0,1)$.  Let   $h=g_{\gamma}-g_{\delta}$   with
  $\delta,\gamma \in(0,1)$ such that $\delta < \gamma$. It is easy to check
  that $h''\leq 0$ on  $[0,1]$. Since $h^{'}(1)=h(1)=0$ by construction,
  we deduce  that $h\le 0$  on $[0,1]$.   Hence, when $\delta$  tends to
  $0$,  the sequence  $(g_{\delta}, 0<\delta<1)$  is non  decreasing and
  converges on $(0,1]$ towards $g$.  By monotone convergence theorem, we
  get $\lim_{\delta\rightarrow 0}I(g_{\delta})=I(g)$.
\item  Assume $a\in(1,3)$.  Notice  that  $(g_{\delta}, 0<\delta<1)$  is
  uniformly  bounded  by a  constant.  Hence,  by dominated  convergence
  theorem,                 we                obtain                 that
  $\lim_{\delta\rightarrow 0}I(g_{\delta})=I(g)$.
\end{itemize}
We now prove  that $\lim_{\delta\rightarrow
  0}J(g_{\delta})=J(g)$. Remark that if $x\in (\delta,1]$,
$(x^{2}g_{\delta}(x))^{''}=(x^{2}g(x))^{''}$, and that there exists a
constant $C(a)$, which does not depend on $\delta$, such that for
all $x\in (0,\delta]$, we have $|(x^{2}g_{\delta}(x))^{''}|\leq
C(a)\delta^{a-1}$. We get that:
 \begin{align*}
\left|J(g_\delta)-J(g)\right|
&=\left|\int_{0<s<t<1}m(s,t)\left((x^{2}g_{\delta}(x))^{''} 
- (x^{2}g(x))^{''}\right)_{x=t-s}dsdt\right|\\
&\le  \norm{h}_\infty 
\,\int_{0}^{\delta}\left(\big|(x^{2}g_{\delta}(x))^{''}-
  (x^{2}g(x))^{''}\big|\right)_{x=r}dr\\ 
&\le  \norm{h}_\infty \,\int_{0}^{\delta}\big(a(a+1)r^{a-1} +
  C(a)\delta^{a-1}\big)dr.
\end{align*}
We deduce that $\lim_{\delta\rightarrow 0}J(g_{\delta})=J(g)$. Thanks to
the 2nd step,  we have $J(g_{\delta})=I(g_{\delta})$ for all $\delta\in
(0,1)$. Letting $\delta$ goes down to 0, we deduce that $J(g)=I(g)$.
\end{proof}

\subsection{Proof of the first part of Lemma \ref{lem:ZH} (finiteness of
  $Z_\beta^H$ and \reff{eq:EZH})}
\label{sec:H}

We use the setting of \cite{dlg:rtlpsbp} on Lévy trees. 
Let $H$ be the height function of a stable Lévy tree with branching
mechanism $\psi(\lambda)=\kappa \lambda^\gamma$, with $\gamma\in (1,2]$
and $\kappa>0$. 

Let  $\N$  be the  excursion  measure  of  the  height process  and  set
$\sigma=\inf\{s>0, \,  H(s)=0\}$ for  the duration  of the  excursion so
that:  $\N[1  -   \expp{-\lambda  \sigma}]=\psi^{-1}(\lambda)$  for  all
$\lambda>0$.   Let  $\N^{(a)}[\bullet]=\N[\bullet  | \sigma=a]$  be  the
distribution of the  excursion of the height process  with duration $a$.
In particular,  we shall  prove the result  of Lemma  \ref{lem:ZH} under
$\N^{(1)}$.  We recall that:
\[
\N[\bullet]
=\int_0^\infty  \hat \pi(da)\,  \N^{(a)}[\bullet]
\quad \text{with}\quad
\hat \pi(da)=\inv{\gamma \kappa^{1/\gamma} \Gamma((\gamma-1)/\gamma)} \,
\frac{da}{a^{1+\inv{\gamma}}}\cdot
\]

In this proof only, we shall write $m$ for $m_H$ defined by
\reff{eq:def-m}. 
We extend the definitions  \reff{eq:def-srs} and \reff{eq:zb} as follows:
\[
\sigma_{r,s}=\int_0^\sigma dt\,  \ind_{\{\min(s,t)\geq  r\}}
\quad\text{and}\quad
Z_\beta^H=\int_{0}^{\sigma}ds\int_{0}^{H(s)}dr\;\sigma_{r,s}^{\beta-1}\quad
\text{for $\beta>0$.}
\] 

The integral in $ds/\sigma$ in  $Z_\beta^H$ corresponds to taking a leaf
at random  in the Lévy  tree. Using  Bismut's decomposition of  the Lévy
tree,  see   Theorem  4.5   in  \cite{dlg:pfalt}   or  Theorem   2.1  in
\cite{ad:farplt}, it is well known  that, since $\psi'(0)=0$, then under
$\N[\sigma \bullet]$, the height  $H(U)$, with $U$ uniformly distributed
over  $[0,   \sigma]$,  is   ``distributed''  as  $\ch$   with  Lebesgue
``distribution''  on  $(0,  +\infty  )$.  It  also  implies  that  under
$\N[\sigma         \bullet]$,         the        random         variable
$\left(H(U),   (\sigma_{H(U)-r,   U},   r\in   [0,   H(U)])\right)$   is
``distributed''  as  $\left(\ch,  (S_t, t\in  [0,  \ch])\right)$,  where
$S=(S_t, t\geq 0)$ is a subordinator,
with Laplace exponent  say $\phi$, independent of $\ch$. \\

We  prove   \reff{eq:EZH}  and  get   as  a  direct   consequence  using
monotonicity,   that   $\N^{(1)}$-a.s.,    for   all   $\beta>1/\gamma$,
$Z_\beta^H$ is finite. Using that:
\begin{equation}
   \label{eq:phi}
(\psi^{-1})'(\lambda)= \N\left[\sigma \expp{-\lambda
    \sigma}\right]
= \N\left[\sigma \expp{- \lambda \sigma_{0, U}}\right]
= \E\left[\expp{-\lambda S_\ch}\right]
= \int_0^\infty dt \, \E\left[\expp{-\lambda S_t}\right]=
\inv{\phi(\lambda)},
\end{equation}
we deduce that:
\begin{equation}
   \label{eq:phi=}
\phi(\lambda)=\inv{(\psi^{-1})'(\lambda)}
=\gamma \kappa^{1/\gamma} \lambda^{(\gamma-1)/\gamma}.
\end{equation}
Notice in particular that $S_t$ is distributed as $t^{\gamma/(\gamma-1)}
S_1$. We shall need later in the proof the following computation:
\begin{equation}
   \label{eq:S1}
 \E\left[S_1^{-(\gamma-1)/\gamma}\right]
=\inv{\Gamma\left(\frac{\gamma-1}{\gamma}\right)} \int_0^\infty
  dt \, t^{-1/\gamma} \E\left[\expp{-t S_1}\right] 
=\inv{\kappa^{1/\gamma}(\gamma-1)\Gamma
  \left(\frac{\gamma-1}{\gamma}\right)}\cdot  
\end{equation}

We set  $\Lambda (\lambda)
=\N\left[Z_\beta^H\expp{-\lambda \sigma}\right]$ for $\lambda>0$.
Using Bismut's decomposition again, we get:
\begin{align*}
\Lambda(\lambda)
= \N\left[\sigma\int_0^{H(U)} dr\,  \sigma_{H(U)-r, U}^{\beta-1} \expp{-\lambda
  \sigma_{0,U}}\right]
&=\E\left[\int_0^\ch dr\, S^{\beta-1}_r \expp{-\lambda S_\ch}
\right]\\
&=\E\left[\int_0^\infty dt \int_0^t dr\, S^{\beta-1}_r \expp{-\lambda S_t}
\right]\\
&=\E\left[\int_0^\infty dt \int_0^\infty  dr\, S^{\beta-1}_r
  \expp{-\lambda S_{t+r} }
\right].
\end{align*}
We have:
\begin{align*}
\Lambda(\lambda)
&=\E\left[\int_0^\infty dt \expp{-\lambda S_{t} }
\right]\E\left[\int_0^\infty  dr\, S^{\beta-1}_r
  \expp{-\lambda S_{r} }
\right]\\
&= \inv{\phi(\lambda)} \E\left[S_1^{\beta-1} \int_0^\infty dr\, 
  r^{(\beta-1)\gamma/(\gamma-1) } \expp{-\lambda r^{\gamma/(\gamma-1)}
    S_1} \right] \\
&=    \inv{\phi(\lambda)} \E\left[S_1^{-(\gamma-1)/\gamma} \right] 
\lambda^{-\beta+(1/\gamma)} \, \, \frac{\gamma-1}{\gamma} \int_0^\infty  du\, u^{\beta-1-(1/\gamma)}
  \expp{-u},
\end{align*}
where we used 
that $S$ has stationary independent increments for the first equality,
\reff{eq:phi} and that  $S_r$ is distributed as $r^{\gamma/(\gamma-1)}
S_1$ for the second, and the change of variable $u=\lambda S_1
r^{\gamma/(\gamma-1)}$ for the last.
Then use \reff{eq:S1} and \reff{eq:phi=} to deduce that:
\begin{equation}
   \label{eq:L=}
\Lambda(\lambda)=\frac{\Gamma\left(\beta-\inv{\gamma}\right)}{\gamma^2
  \kappa^{2/\gamma}  \Gamma\left(\frac{\gamma-1}{\gamma}\right)} \,
\lambda^{-1-\beta + \frac{2}{\gamma}}.
\end{equation}

On the other hand, we set
$G(a)=\N^{(a)}[Z_\beta^H]$ so that:
\[
\Lambda(\lambda)= \int_0^\infty \hat \pi(da)\, G(a)\expp{-\lambda a}.
\]
We deduce from the scaling property of the height function that,
under $\N^{(a)}$, the random variable $\Big((H(s), s\in [0,a]), \, (\sigma_{r,s}; r\in [0, H(s)], s\in
[0,a])\Big) $ is  distributed as the random variable 
$\Big((a^{(\gamma-1)/\gamma} H(s/a), \, s\in [0,a]), (a\sigma_{r,s/a}; r\in
[0, a^{(\gamma-1)/\gamma} H(s/a) ], s\in
[0,a])\Big) $  under $\N^{(1)}$. This implies that $Z^H_\beta$ is 
under $\N^{(a)}$ distributed as $a^{\beta+1-1/\gamma}Z^H_\beta$ 
under $\N^{(1)}$. This gives $G(a)=a^{\beta+1-1/\gamma} G(1)$. We deduce
that:
\[
\Lambda(\lambda)= G(1) \int_0^\infty \hat \pi(da)\, a^{\beta+1-\inv{\gamma}}
\expp{-\lambda a}= G(1) \frac{\Gamma\left(\beta+1- \frac{2}{\gamma}\right)}
{\gamma \kappa^{1/\gamma} \Gamma\left(\frac{\gamma-1}{\gamma}\right)}\,
\lambda ^{-\beta-1 + \frac{2}{\gamma}}.
\]
Then use \reff{eq:L=} to get that for all $\beta>0$:
\[
\N^{(1)}[Z^H_\beta]=G(1)=\inv{\gamma \kappa^{1/\gamma}} \,
\frac{\Gamma\left(\beta- 
    \inv{\gamma}\right)} {\Gamma\left(\beta+1-\frac{2}{\gamma}\right)}\cdot
\]
This gives \reff{eq:EZH} and that
$\N^{(1)}$-a.s., for all $\beta>1/\gamma$, $Z_\beta^H$ is finite.\\

We  prove now  that  $\N^{(1)}$-a.s., for  all $\beta\in  (0,1/\gamma]$,
$Z_\beta^H$ is infinite. Let  $\beta\in(0, 1/\gamma]$. Let
$U$ be uniform on $[0, \sigma]$  under $\N$. According to the first part
of  the   proof,  we  deduce   from  the  Bismut's   decomposition  that
$\int_{0}^{H(U)}dr\;\sigma_{r,U}^{\beta-1}$           is,          under
$\N[\sigma        \bullet        |H(U)=t]$,        distributed        as
$\int_0^{t} dr\, S_r^{\beta-1}$. Thanks to \cite{b:lp} see Theorem 11 in
chapter  III  and  since  $S$   is  a  stable  subordinator  with  index
$(\gamma-1)/\gamma$,              we              have              that
$\limsup_{r\rightarrow      0+}      S_r     /h(r)>0$      a.s.      for
$h(r)=r^{\gamma/(\gamma-1)}     \log(|\log(r)|)^{-1/(\gamma-1)}$.     As
$\beta\in           (0,           1/\gamma]$,          we           have
$\int_0  dr   \,  h(r)^{\beta-1}=+\infty  $.  This   implies  that  a.s.
$\int_0    dr   \,    S_r^{\beta-1}=+\infty   $.     We   deduce    that
$\N$-a.e.             $ds$-a.e.             on             $[0,\sigma]$,
$\int_{0}^{H(s)}dr\;\sigma_{r,s}^{\beta-1}=+\infty  $.  This gives  that
$\N$-a.e.  $Z_\beta^H=+\infty  $. Then  use the  scaling to  deduce that
$\N^{(1)}$-a.s. $Z_\beta^H=+\infty $.
%\end{proof}

\bibliographystyle{abbrv}
\bibliography{biblio_1}

\begin{thebibliography}{10}

\bibitem{ad:farplt}
R.~Abraham and J.-F. Delmas.
\newblock The forest associated with the record process on a {L}évy tree.
\newblock {\em Stochastic Process. Appl.}, 123(9):3497 -- 3517, 2013.

\bibitem{ad:13}
R.~Abraham and J.-F. Delmas.
\newblock Record process on the continuum random tree.
\newblock {\em ALEA Lat. Am. J. Probab. Math. Stat.}, 10(1):225--251, 2013.

\bibitem{ad:llcGWcc}
R.~Abraham and J.-F. Delmas.
\newblock Local limits of conditioned {G}alton-{W}atson trees: the condensation
  case.
\newblock {\em Electron. J. Probab.}, 19:no. 56, 29, 2014.

\bibitem{ad:llcGWisc}
R.~Abraham and J.-F. Delmas.
\newblock Local limits of conditioned {G}alton-{W}atson trees: the infinite
  spine case.
\newblock {\em Electron. J. Probab.}, 19:no. 2, 19, 2014.

\bibitem{a:afdgfrt}
D.~Aldous.
\newblock Asymptotic fringe distributions for general families of random trees.
\newblock {\em Ann. Appl. Probab.}, 1(2):228--266, 1991.

\bibitem{a:crt1}
D.~Aldous.
\newblock The continuum random tree. {I}.
\newblock {\em Ann. Probab.}, 19(1):1--28, 1991.

\bibitem{a:crt2}
D.~Aldous.
\newblock The continuum random tree. {II}. {A}n overview.
\newblock In {\em Stochastic analysis ({D}urham, 1990)}, volume 167 of {\em
  London Math. Soc. Lecture Note Ser.}, pages 23--70. Cambridge Univ. Press,
  Cambridge, 1991.

\bibitem{a:III}
D.~Aldous.
\newblock The continuum random tree. {III}.
\newblock {\em Ann. Probab.}, 21(1):248--289, 1993.

\bibitem{b:lp}
J.~Bertoin.
\newblock {\em Lévy Processes}.
\newblock Cambridge University Press, 1996.

\bibitem{bfj:mv}
M.~Blum, O.~Fran{\c{c}}ois, and S.~Janson.
\newblock The mean, variance and limiting distribution of two statistics
  sensitive to phylogenetic tree balance.
\newblock {\em Ann. Appl. Probab.}, 16(4):2195--2214, 2006.

\bibitem{p:iap}
P.~J. Davis.
\newblock {\em Interpolation and approximation}.
\newblock Dover Publications, Inc., New York, 1975.

\bibitem{d:llsf}
L.~Devroye.
\newblock Limit laws for sums of functions of subtrees of random binary search
  trees.
\newblock {\em SIAM J. Comput.}, 32(1):152--171, 2002/03.

\bibitem{df:tpl}
R.~P. Dobrow and J.~A. Fill.
\newblock Total path length for random recursive trees.
\newblock {\em Combin. Probab. Comput.}, 8(4):317--333, 1999.
\newblock Random graphs and combinatorial structures (Oberwolfach, 1997).

\bibitem{d:ltcpcgwt}
T.~Duquesne.
\newblock A limit theorem for the contour process of conditioned
  {G}alton-{W}atson trees.
\newblock {\em Ann. Probab.}, 31(2):996--1027, 2003.

\bibitem{dlg:rtlpsbp}
T.~Duquesne and J.-F. Le~Gall.
\newblock Random trees, {L}\'evy processes and spatial branching processes.
\newblock {\em Ast\'erisque}, (281):vi+147, 2002.

\bibitem{dlg:pfalt}
T.~Duquesne and J.-F. Le~Gall.
\newblock Probabilistic and fractal aspects of {L}\'evy trees.
\newblock {\em Probab. Theory Related Fields}, 131(4):553--603, 2005.

\bibitem{e:prt}
S.~N. Evans.
\newblock {\em Probability and real trees}, volume 1920 of {\em Lecture Notes
  in Mathematics}.
\newblock Springer, Berlin, 2008.
\newblock Lectures from the 35th Summer School on Probability Theory held in
  Saint-Flour, July 6--23, 2005.

\bibitem{f:dbsturpm}
J.~A. Fill.
\newblock On the distribution of binary search trees under the random
  permutation model.
\newblock {\em Random Struct. Algo.}, 8(1):1--25, 1996.

\bibitem{ffk:sahptr}
J.~A. Fill, P.~Flajolet, and N.~Kapur.
\newblock Singularity analysis, {H}adamard products, and tree recurrences.
\newblock {\em J. Comput. Appl. Math.}, 174(2):271--313, 2005.

\bibitem{fj:plartslrvrt}
J.~A. Fill and S.~Janson.
\newblock Precise logarithmic asymptotics for the right tails of some limit
  random variables for random trees.
\newblock {\em Ann. Comb.}, 12(4):403--416, 2009.

\bibitem{fk:ld}
J.~A. Fill and N.~Kapur.
\newblock Limiting distributions for additive functionals on {C}atalan trees.
\newblock {\em Theoret. Comput. Sci.}, 326(1-3):69--102, 2004.

\bibitem{fk:rafust}
J.~A. Fill and N.~Kapur.
\newblock A repertoire for additive functionals of uniformly distributed
  {$m$}-ary search trees (extended abstract).
\newblock In {\em 2005 {I}nternational {C}onference on {A}nalysis of
  {A}lgorithms}, Discrete Math. Theor. Comput. Sci. Proc., AD, pages 105--114
  (electronic). Assoc. Discrete Math. Theor. Comput. Sci., Nancy, 2005.

\bibitem{fk:ttadrst}
J.~A. Fill and N.~Kapur.
\newblock Transfer theorems and asymptotic distributional results for {$m$}-ary
  search trees.
\newblock {\em Random Struct. Algo.}, 26(4):359--391, 2005.

\bibitem{fgm:prbst}
P.~Flajolet, X.~Gourdon, and C.~Mart{\'{\i}}nez.
\newblock Patterns in random binary search trees.
\newblock {\em Random Struct. Algo.}, 11(3):223--244, 1997.

\bibitem{f:pc}
D.~J. Ford.
\newblock {\em Probabilities on cladograms: {I}ntroduction to the alpha model}.
\newblock ProQuest LLC, Ann Arbor, MI, 2006.
\newblock Thesis (Ph.D.)--Stanford University.

\bibitem{g:seirg}
W.~Gautschi.
\newblock Some elementary inequalities relating to the gamma and incomplete
  gamma function.
\newblock {\em J. Math. and Phys.}, 38:77--81, 1959/60.

\bibitem{hj:ft}
C.~Holmgren and S.~Janson.
\newblock Limit laws for functions of fringe trees for binary search trees and
  random recursive trees.
\newblock {\em Electron. J. Probab.}, 20:no. 4, 51, 2015.

\bibitem{hn:pc}
H.-K. Hwang and R.~Neininger.
\newblock Phase change of limit laws in the quicksort recurrence under varying
  toll functions.
\newblock {\em SIAM J. Comput.}, 31(6):1687--1722 (electronic), 2002.

\bibitem{j:wisgrt}
S.~Janson.
\newblock The {W}iener index of simply generated random trees.
\newblock {\em Random Struct. Algo.}, 22(4):337--358, 2003.

\bibitem{j:sgtcgwt}
S.~Janson.
\newblock Simply generated trees, conditioned {G}alton-{W}atson trees, random
  allocations and condensation.
\newblock {\em Probab. Surv.}, 9:103--252, 2012.

\bibitem{j:ftgw}
S.~Janson.
\newblock Asymptotic normality of fringe subtrees and additive functionals in
  conditioned {G}alton-{W}atson trees.
\newblock {\em Random Struct. Algo.}, 48(1):57--101, 2016.

\bibitem{jc:2004}
S.~Janson and P.~Chassaing.
\newblock The center of mass of the {ISE} and the {W}iener index of trees.
\newblock {\em Electron. Comm. Probab.}, 9:178--187 (electronic), 2004.

\bibitem{k:gwpctp}
D.~P. Kennedy.
\newblock The {G}alton-{W}atson process conditioned on the total progeny.
\newblock {\em J. Appl. Probability}, 12(4):800--806, 1975.

\bibitem{k:ipgw}
I.~Kortchemski.
\newblock Invariance principles for {G}alton-{W}atson trees conditioned on the
  number of leaves.
\newblock {\em Stochastic Process. Appl.}, 122(9):3126--3172, 2012.

\bibitem{k:spthd}
I.~Kortchemski.
\newblock A simple proof of {D}uquesne's theorem on contour processes of
  conditioned {G}alton-{W}atson trees.
\newblock In {\em S\'eminaire de {P}robabilit\'es {XLV}}, volume 2078 of {\em
  Lecture Notes in Math.}, pages 537--558. Springer, Cham, 2013.

\bibitem{k:ltnggw}
I.~Kortchemski.
\newblock Limit theorems for conditioned non-generic {G}alton-{W}atson trees.
\newblock {\em Ann. Inst. Henri Poincar\'e Probab. Stat.}, 51(2):489--511,
  2015.

\bibitem{lg:93}
J.-F. Le~Gall.
\newblock The uniform random tree in a {B}rownian excursion.
\newblock {\em Probab. Theory Related Fields}, 96(3):369--383, 1993.

\bibitem{lg:rta}
J.-F. Le~Gall.
\newblock Random trees and applications.
\newblock {\em Probab. Surv.}, 2:245--311, 2005.

\bibitem{lglj:bplp}
J.-F. Le~Gall and Y.~Le~Jan.
\newblock Branching processes in {L}\'evy processes: the exploration process.
\newblock {\em Ann. Probab.}, 26(1):213--252, 1998.

\bibitem{m:ld}
H.~M. Mahmoud.
\newblock Limiting distributions for path lengths in recursive trees.
\newblock {\em Probab. Engrg. Inform. Sci.}, 5(1):53--59, 1991.

\bibitem{mrr}
A.~Mir, F.~Rossell{\'o}, and L.~Rotger.
\newblock A new balance index for phylogenetic trees.
\newblock {\em Math. Biosci.}, 241(1):125--136, 2013.

\bibitem{n:mono}
R.~Neininger.
\newblock On binary search tree recursions with monomials as toll functions.
\newblock {\em J. Comput. Appl. Math.}, 142(1):185--196, 2002.

\bibitem{n:wiener}
R.~Neininger.
\newblock The {W}iener index of random trees.
\newblock {\em Combin. Probab. Comput.}, 11(6):587--597, 2002.

\bibitem{n:apghw}
J.~Neveu.
\newblock Arbres et processus de {G}alton-{W}atson.
\newblock {\em Ann. Inst. H. Poincar\'e Probab. Statist.}, 22(2):199--207,
  1986.

\bibitem{p:csp}
J.~Pitman.
\newblock {\em Combinatorial stochastic processes}, volume 1875 of {\em Lecture
  Notes in Mathematics}.
\newblock Springer-Verlag, Berlin, 2006.
\newblock Lectures from the 32nd Summer School on Probability Theory held in
  Saint-Flour, July 7--24, 2002, With a foreword by Jean Picard.

\bibitem{r:lds}
M.~R{\'e}gnier.
\newblock A limiting distribution for quicksort.
\newblock {\em RAIRO Inform. Th\'eor. Appl.}, 23(3):335--343, 1989.

\bibitem{r:qeem}
C.~Richard.
\newblock On {$q$}-functional equations and excursion moments.
\newblock {\em Discrete Math.}, 309(1):207--230, 2009.

\bibitem{r:q}
U.~R{\"o}sler.
\newblock A limit theorem for ``{Q}uicksort''.
\newblock {\em RAIRO Inform. Th\'eor. Appl.}, 25(1):85--100, 1991.

\bibitem{rr:cmra}
U.~R{\"o}sler and L.~R{\"u}schendorf.
\newblock The contraction method for recursive algorithms.
\newblock {\em Algorithmica}, 29(1-2):3--33, 2001.

\bibitem{t:thrrbt}
L.~Tak{\'a}cs.
\newblock On the total heights of random rooted binary trees.
\newblock {\em J. Combin. Theory Ser. B}, 61(2):155--166, 1994.

\end{thebibliography}
 
\end{document}